\newcommand{\esp}{\mathbb{E}}
\newcommand{\probn}{\mathbf{P}}
\newcommand{\alg}{\mathcal{F}}
\DeclareMathOperator*{\solset}{S}
\newcommand{\re}{\mathbb{R}}
\DeclareMathOperator*{\dist}{d}
\DeclareMathOperator*{\diam}{\mathcal{D}}
\DeclareMathOperator*{\argmin}{argmin}
\DeclareMathOperator*{\cov}{\mbox{cov}}
\newtheorem{assump}{Assumption}
\newtheorem{lemma}{Lemma}
\newtheorem{theorem}{Theorem}
\newtheorem{corollary}{Corollary}
\newtheorem{proposition}{Proposition}
\theoremstyle{definition}
\newtheorem{example}{Example}
\theoremstyle{definition}
\newtheorem{remark}{Remark}
\newcommand{\vertiii}[1]{{\left\vert\kern-0.4ex #1 
\kern-0.4ex\right\vert}}
\newcommand{\Lpnorm}[1]{\vertiii{\,#1\,}_{p}}
\newcommand{\Lnorm}[1]{\vertiii{\,#1\,}_{2}}
\begin{document}

\title{On variance reduction for stochastic smooth convex optimization with multiplicative noise} 
		
\author{Alejandro Jofr\'e, Center for Mathematical Modeling (CMM) \& DIM, \href{}{ajofre@dim.uchile.cl},\and
Philip Thompson, Center for Mathematical Modeling (CMM), \href{}{pthompson@uchile.dim.cl}
}
\maketitle

\begin{abstract}
We propose dynamic sampled stochastic approximation (SA) methods for stochastic optimization with a heavy-tailed distribution (with finite 2nd moment). The objective is the sum of a smooth convex function with a convex regularizer. Typically, it is assumed an oracle with an upper bound $\sigma^2$ on its variance (OUBV). Differently, we assume an oracle with \emph{multiplicative noise}. This rarely addressed setup is more aggressive but realistic, where the variance may not be bounded.  Our methods achieve optimal iteration complexity and (near) optimal oracle complexity. For the smooth convex class, we use an accelerated SA method a la FISTA which achieves, given tolerance $\epsilon>0$, the optimal iteration complexity of $\mathcal{O}(\epsilon^{-\frac{1}{2}})$ with a near-optimal oracle complexity of $\mathcal{O}(\epsilon^{-2})[\ln(\epsilon^{-\frac{1}{2}})]^2$. This improves upon Ghadimi and Lan [\emph{Math. Program.}, 156:59-99, 2016] where it is assumed an OUBV. For the strongly convex class, our method achieves optimal iteration complexity of $\mathcal{O}(\ln(\epsilon^{-1}))$ and optimal oracle complexity of $\mathcal{O}(\epsilon^{-1})$. This improves upon Byrd et al. [\emph{Math. Program.}, 134:127-155, 2012] where it is assumed an OUBV. In terms of variance, our bounds are local: they depend on variances $\sigma(x^*)^2$ at solutions $x^*$ and the per unit distance multiplicative variance $\sigma^2_L$. For the smooth convex class, there exist policies such that our bounds resemble those obtained if it was assumed an OUBV with $\sigma^2:=\sigma(x^*)^2$. For the strongly convex class such property is obtained exactly if the condition number is estimated or in the limit for better conditioned problems or for larger initial batch sizes. In any case, if it is assumed an OUBV, our bounds are thus much sharper since typically $\max\{\sigma(x^*)^2,\sigma_L^2\}\ll\sigma^2$.
\end{abstract}

\section{Introduction}

We consider methods for convex optimization problems where only noisy first-order information is assumed. This setting includes problems in signal processing and empirical risk minimization for machine learning  \cite{bach:moulines2011,sra:nowozin:wright2012,bottou:curtis:nocedal2016},
stochastic optimization and finance \cite{shapiro:dent:rus2009,nem:jud:lan:shapiro2009} and simulation optimization \cite{fu2014}. In such problems, we have a closed convex set $X\subset\re^d$, a distribution $\probn$ over a sample space $\Xi$ and a measurable function $F:X\times\Xi\rightarrow\re$ satisfying
\begin{equation}\label{equation:expected:valued:objective}
f(x):=\esp F(x,\xi)=\int_\Xi F(x,\xi)\dist\probn(\xi),\quad (x\in X),
\end{equation}
where for almost every (a.e.) $\xi\in\Xi$, $F(\cdot,\xi)$ is a continuously differentiable convex function for which $F(x,\cdot)$ and $\nabla F(x,\cdot)$ are integrable. The stochastic optimization problem is then to solve 
\begin{eqnarray}\label{problem:min:intro}
\min_{x\in X}f(x).
\end{eqnarray}
The challenge aspect of stochastic optimization, when compared to deterministic optimization, is that the expectation \eqref{equation:expected:valued:objective} cannot be evaluated.\footnote{Typical reasons are: a sample space with high dimension requiring Monte Carlo evaluation, no knowledge of the distribution $\probn$ or, even worse, no knowledge of a close form for $F$.} However, a practical assumption is that the decision maker have access to samples drawn from the distribution $\probn$. 

Two different methodologies exist for solving \eqref{equation:expected:valued:objective}-\eqref{problem:min:intro} when samples $\{\xi_j\}_{j=1}^N$ of the distribution $\probn$ is available. The \emph{Sample Average Approximation} (SAA) methodology is to solve the problem
\begin{eqnarray}\label{problem:SAA}
\min_{x\in X}\left\{\widehat F_N(x):=\frac{1}{N}\sum_{j=1}^NF(\xi_j, x)\right\},
\end{eqnarray}
by resorting to a chosen algorithm. See for instance \cite{shapiro:dent:rus2009,nem:jud:lan:shapiro2009}
for such kind of approach in stochastic optimization based on Monte Carlo simulation. Such methodology is also the case of Empirical Risk Minimization (ERM) in statistical machine learning where $\probn$ is unknown and a limited number of samples is acquired by measurements. Note that \eqref{problem:SAA} itself is of the form \eqref{equation:expected:valued:objective}-\eqref{problem:min:intro} with the empirical distribution $\widehat\probn_N:=\frac{1}{N}\sum_{j=1}^N\delta_{\xi_j}$, where $\delta_\xi$ denotes the Dirac measure at the point $\xi\in\Xi$.

A different methodology is the \emph{Stochastic Approximation} (SA) approach where the samples are accessed in an interior and online fashion: a deterministic version of an algorithm is chosen and samples are used whenever the algorithm requires gradients at the current or previous iterates \cite{nem:jud:lan:shapiro2009,lan2012}. In this setting the mechanism to access $F$ via samples of $\probn$ is usually named a \emph{stochastic oracle} (SO). Precisely, given an input $x\in X$ and an independent identically distributed (i.i.d.) sample $\{\xi_j\}$ drawn from $\probn$ (also independent of the input $x$), the SO outputs unbiased gradients $\{\nabla F(x,\xi_j)\}$, that is, satisfying $\esp[\nabla F(x,\xi_j)]=\nabla f(x)$ for all $j$.

The SA methodology was first proposed by Robbins and Monro in \cite{robbins:monro1951} for problem \eqref{equation:expected:valued:objective}-\eqref{problem:min:intro}
when $f$ is a smooth strongly convex function under specific
conditions. In the unconstrained case, the method takes the form
\begin{equation}\label{equation:SGD}
x^{t+1}:=x^t-\alpha_t\nabla F(x^t,\xi^t),
\end{equation}
where $\alpha_t$ is a positive stepsize and $\xi^t$ is a sample from $\probn$. The above method is one SA version of the gradient descent method, known as Stochastic Gradient method (SG). This methodology was then extensively explored in numerous works spanning the communities of statistics and stochastic approximation,  stochastic optimization and machine learning. We refer to \cite{chung1954,dvoretzky1956,nem:jud:lan:shapiro2009,bach:moulines2011,sra:nowozin:wright2012,%
ghadimi:lan2012,ghadimi:lan2013}
for further references. More recently, the SA methodology was also analyzed for stochastic
variational inequalities (VI). See e.g. \cite{jiang:xu2008,juditsky:nemirovski:tauvel2011,%
iusem:jofre:thompson2015,iusem:jofre:oliveira:thompson2017,iusem:jofre:oliveira:thompson2016,%
balamurugan:bach2016} and references therein. 
VI is a framework which generalizes unconstrained system of equations and the first order necessary condition of constrained minimization (including a broader class of problems such as equilibrium, saddle-point and complementarity problems). 

In this work we consider SA methods for solving constrained and regularized stochastic \emph{smooth convex} optimization problems (CO) of the form
\begin{eqnarray}\label{problem:min:composite}
g^*:=\min_{x\in X}\left\{g(x):=f(x)+\varphi(x)\right\},
\end{eqnarray}
where $X\subset\re^d$ is closed and convex, $f:X\rightarrow\re$ is a smooth convex function satisfying \eqref{equation:expected:valued:objective} and $\varphi:X\rightarrow\re^d$ is a (possibly nonsmooth) convex function. By smoothness we mean $f$ is a differentiable function satisfying, for some $L>0$,
\begin{eqnarray}\label{equation:smoothness:intro}
\Vert\nabla f(x)-\nabla f(y)\Vert\le L\Vert x-y\Vert, \quad\forall x,y\in X,
\end{eqnarray}
where $\Vert\cdot\Vert$ denotes the Euclidean norm. The function $\varphi$ is suppose to be a known simple function so that proximal evaluations are easy to compute (See Section \ref{section:preliminaries:notation}). The above set-up includes many problems in stochastic optimization, simulation optimization and, in particular, inference problems in machine learning and signal processing \cite{nem:jud:lan:shapiro2009,bottou:curtis:nocedal2016,beck:teboulle2009}. The function $\varphi$ is often used in applications to induce regularization and parsimony (such as sparsity). Examples include $\varphi:=\lambda\Vert\cdot\Vert^2$ (as in \emph{ridge regression}), $\varphi:=\lambda\Vert\cdot\Vert_1$ (as in the Lasso estimator) or $\varphi:=\lambda\Vert\cdot\Vert^2+\gamma\Vert\cdot\Vert_1$ (as in \emph{elastic net}) for some $\lambda,\gamma>0$ where $\Vert\cdot\Vert_1$ denotes the the $\ell_1$-norm (see, e.g. \cite{xiao:zhang2014}). We will denote the solution set of \eqref{problem:min:composite} by S$(f,\varphi)$.

We will also consider the special subclass where $f$ is \emph{smooth $c$-strongly convex}, i.e., satisfying \eqref{equation:smoothness:intro} and for some $c>0$,
\begin{eqnarray}\label{equation:strong:convexity}
f(x)+\langle\nabla f(x),y-x\rangle+\frac{c}{2}\Vert y-x\Vert^2\le f(y),\quad\forall x,y\in X.
\end{eqnarray}
In this case, problem \eqref{problem:min:composite} has an unique solution. 
In order to present the main ideas with respect to variance reduction, we refrain from considering non-Euclidean geometries which will be treated in future work. As usual, we make the following assumption on the sampling resource.
\begin{assump}[i.i.d. sampling]\label{assump:iid:sampling}
In all our methods, the samples drawn from the distribution $\probn$ and used along the chosen algorithm are i.i.d..
\end{assump}

\subsection{Oracle assumptions}\label{subsection:oracle:assumptions}

In stochastic optimization, the assumption on the \emph{stochastic oracle's variance} is as important as the class of smoothness of the objective since both have consequences in obtaining surrogate models and on condition numbers of the problem. Define, for every $x\in X$, the pointwise oracle's standard deviation by
$
\sigma(x):=\Lnorm{\Vert\nabla F(x,\xi)-\nabla f(x)\Vert},
$
where $\Lnorm{q(\xi)}:=\sqrt{\esp[q(\xi)^2]}$ denotes the $\mathsf{L}^2$-norm of the random variable $q:\Xi\rightarrow\re$.

A reasonable hypothesis, used since the seminal work of Robbins and Monro \cite{robbins:monro1951}, is to assume a SO with an \emph{uniformly bounded variance} over the feasible set $X$, i.e., that there exists $\sigma>0$, such that
\begin{eqnarray}\label{equation:uniform:variance:intro}	
\sup_{x\in X}\esp\left[\Vert\nabla F(x,\xi)-\nabla f(x)\Vert^2\right]\le\sigma^2.
\end{eqnarray}
Condition \eqref{equation:uniform:variance:intro} is a \emph{global variance} assumption on the noise when using stochastic approximation. Assumption \eqref{equation:uniform:variance:intro} is valid in important instances, e.g., when the feasible set $X$ is compact or when an \emph{uniform additive noise} is assumed, that is, for some centered random variable $\epsilon\in\re^d$ with $\esp[\Vert\epsilon\Vert^2]\le\sigma^2$, for a.e. $\xi\in\Xi$,
\begin{equation}\label{equation:additive:noise}
\nabla F(\xi,x)=\nabla f(x)+\epsilon(\xi),\quad\forall x\in X.
\end{equation}
Property \eqref{equation:additive:noise} is a reasonable assumption in many important ERM problems, such as instances of the least squares regression problem (LSR). Note that property \eqref{equation:additive:noise}, although more structured, allows unconstrained optimization ($X=\re^d$). 

Property \eqref{equation:additive:noise} is not a reasonable assumption in problems where the noise is dependent on the point of the feasible set. In  that case, property \eqref{equation:uniform:variance:intro} is an important generalization of \eqref{equation:additive:noise} assumed in most stochastic optimization methods. However, it implicitly assumes compactness of $X$. This has two drawbacks. The first is that it rules out unconstrained minimization problems where  \eqref{equation:additive:noise} is not satisfied. This includes many stochastic and simulation optimization problems as well as LSR without additive noise, a more aggressive but relevant condition in ERM. The second is that, even if  \eqref{equation:uniform:variance:intro} does hold, $\sigma^2$ may be very large. The reason is that, in case of multiplicative noise, \emph{$\sigma(\cdot)^2$ is typically coercive over} $X$ and, hence, $\sigma^2$ grows with the diameter of $X$ (see Example \ref{example:1} in the following).

In this work we will consider the following assumption.
\begin{assump}[Oracle with multiplicative noise]\label{assump:oracle:multiplicative:noise}
There exist $x^0\in X$ such that $\sigma(x^0)<\infty$ and $\sigma_L>0$ such that\footnote{The convergence theory we present would work if \eqref{equation:nonuniform:variance:intro} is satisfied for just one $x^*\in X$. However, more uniform bounds are obtained under Assumption \ref{assump:oracle:multiplicative:noise} which is satisfied when \eqref{equation:lipschitz:random:intro} holds.}
\begin{equation}\label{equation:nonuniform:variance:intro}
\sigma(x)\le\sigma(x^*)+\sigma_L\Vert x-x^*\Vert,\quad\forall x,x^*\in X.
\end{equation}
\end{assump}

The number $\sigma_L$ bounds the per unit distance multiplicative spread of the standard deviation of the oracle relative to the reference point $x^*\in X$. Precisely, $\sigma(x)-\sigma(x^*)\le\sigma_L$, if $\Vert x-x^*\Vert\le1$. Condition \eqref{equation:nonuniform:variance:intro} is much weaker than \eqref{equation:uniform:variance:intro} since it allows the more aggressive setting where
\begin{equation}
\sup_{x\in X}\esp\left[\Vert\nabla F(x,\xi)-\nabla f(x)\Vert^2\right]=\infty,\label{equation:unbounded:variance}
\end{equation}
and, in case $X$ is compact, it implies \eqref{equation:uniform:variance:intro} with $\sigma^2:=2\sigma(x^*)^2+2\sigma_L^2\diam(X)^2$, where $\diam(X)$ is the diameter of $X$. However, we note the quadratic dependence on $\diam(X)$ and that $\sigma^2\gg\min\{\sigma^2(x^*),\sigma_L^2\}$ if $\sqrt{\diam(X)}$ is large. In this sense, condition \eqref{equation:nonuniform:variance:intro} exploits \emph{local variance} behavior of the noise when using stochastic approximation. One of our main objectives in this work is to consider the more general condition \eqref{equation:nonuniform:variance:intro} in stochastic approximation algorithms for smooth convex optimization. We refer the reader to \cite{bach2014}, where \emph{local strong convexity} is exploited in order to ensure improved error bounds of order $\mathcal{O}(t^{-1})$ in terms of better constants.\footnote{The local strong convexity modulus around the unique solution $x^*$ is potentially much higher than the global strong convexity modulus $c$.}

The terminology ``multiplicative noise'' and generality of \eqref{equation:nonuniform:variance:intro} are explained in the following lemma whose proof is in the Appendix.
\begin{lemma}\label{lemma:multiplicative:noise:from:random lipschitz}
Suppose the random variable $\mathsf{L}:\Xi\rightarrow\re_+$ has finite variance and for a.e. $\xi\in\Xi$,
\begin{eqnarray}\label{equation:lipschitz:random:intro}
\Vert\nabla F(x,\xi)-\nabla F(y,\xi)\Vert\le\mathsf{L}(\xi)\Vert x-y\Vert,\quad x,y\in X.
\end{eqnarray}
Then \eqref{equation:smoothness:intro} holds with $L:=\Lnorm{\mathsf{L}(\xi)}$ and Assumption \ref{assump:oracle:multiplicative:noise} holds with 
$
\sigma_L=2L.
$
\end{lemma}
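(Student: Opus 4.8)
The plan is to verify the two claims separately; both follow from Minkowski's inequality on the Hilbert space $L^2(\Xi,\probn;\re^d)$ combined with the pointwise random Lipschitz bound \eqref{equation:lipschitz:random:intro}.

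First I would handle the smoothness claim. Starting from the unbiasedness identity $\nabla f(x)=\esp[\nabla F(x,\xi)]$, for any $x,y\in X$ I would write
\begin{equation*}
\Vert\nabla f(x)-\nabla f(y)\Vert=\Vert\esp[\nabla F(x,\xi)-\nabla F(y,\xi)]\Vert\le\esp\Vert\nabla F(x,\xi)-\nabla F(y,\xi)\Vert\le\esp[\mathsf{L}(\xi)]\,\Vert x-y\Vert,
\end{equation*}
using Jensen's inequality for the norm (that is, $\Vert\esp Z\Vert\le\esp\Vert Z\Vert$) followed by the hypothesis \eqref{equation:lipschitz:random:intro}. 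Since $\esp[\mathsf{L}(\xi)]\le\Lnorm{\mathsf{L}(\xi)}$ by Cauchy--Schwarz, this gives \eqref{equation:smoothness:intro} with the stated $L=\Lnorm{\mathsf{L}(\xi)}$, which is finite by the finite-variance hypothesis on $\mathsf{L}$.

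For the multiplicative-noise claim I would regard $\sigma(x)$ as the $L^2(\Xi,\probn;\re^d)$-norm of the centered oracle error $e(x,\cdot):=\nabla F(x,\cdot)-\nabla f(x)$, so that $\sigma(x)=\Lnorm{\Vert e(x,\xi)\Vert}$. The key step is the decomposition
\begin{equation*}
e(x,\xi)-e(x^*,\xi)=\big[\nabla F(x,\xi)-\nabla F(x^*,\xi)\big]-\big[\nabla f(x)-\nabla f(x^*)\big],
\end{equation*}
after which two applications of Minkowski's inequality yield
\begin{equation*}
\sigma(x)\le\sigma(x^*)+\Lnorm{\Vert\nabla F(x,\xi)-\nabla F(x^*,\xi)\Vert}+\Vert\nabla f(x)-\nabla f(x^*)\Vert,
\end{equation*}
where the final term carries no $L^2$-norm because $\nabla f(x)-\nabla f(x^*)$ is deterministic. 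I would then bound the random increment through \eqref{equation:lipschitz:random:intro}, obtaining $\Lnorm{\Vert\nabla F(x,\xi)-\nabla F(x^*,\xi)\Vert}\le\Lnorm{\mathsf{L}(\xi)}\Vert x-x^*\Vert=L\Vert x-x^*\Vert$, and the deterministic increment through the smoothness just established, giving $\Vert\nabla f(x)-\nabla f(x^*)\Vert\le L\Vert x-x^*\Vert$. Summing produces \eqref{equation:nonuniform:variance:intro} with $\sigma_L=2L$.

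The only point requiring care, rather than mechanical estimation, is the finiteness clause of Assumption \ref{assump:oracle:multiplicative:noise}: one needs a single $x^0\in X$ with $\sigma(x^0)<\infty$. This cannot be extracted from \eqref{equation:lipschitz:random:intro} and the finite variance of $\mathsf{L}$ alone, since the reference gradient $\nabla F(x^0,\cdot)$ is only assumed integrable; it reflects the natural square-integrability of the oracle at one point. Granted one such $x^0$, the inequality derived above immediately propagates finiteness of $\sigma$ to every $x\in X$, so the substance of the argument is the Minkowski estimate, and I expect no serious obstacle beyond keeping track of which quantities are random and which are deterministic.
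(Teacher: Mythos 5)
Your proof is correct and follows essentially the same route as the paper's: Jensen's inequality plus H\"older (Cauchy--Schwarz) for the smoothness claim, and then a three-term Minkowski/triangle-inequality decomposition of $\nabla F(x,\xi)-\nabla f(x)$ through a reference point, bounding the random increment via \eqref{equation:lipschitz:random:intro} and the deterministic increment via the smoothness just established, giving $\sigma_L=2L$. Your closing observation about the finiteness clause $\sigma(x^0)<\infty$ is fair but does not change the substance: the paper's own proof likewise establishes only the inequality \eqref{equation:nonuniform:variance:intro} and leaves square-integrability of the oracle at one point implicit.
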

Hence, condition \eqref{equation:lipschitz:random:intro} defines the smoothness of $f$ \emph{and} the variance of the oracle as in Assumption \ref{assump:oracle:multiplicative:noise}. We note that \eqref{equation:lipschitz:random:intro} is a standard assumption in stochastic optimization \cite{shapiro:dent:rus2009}. In the sense of \eqref{equation:lipschitz:random:intro}, the random variable $\mathsf{L}$ is indeed a multiplicative noise on the first order SO. Under the assumptions of Lemma \ref{lemma:multiplicative:noise:from:random lipschitz}, Assumption \ref{assump:oracle:multiplicative:noise} is merely a \emph{finite second moment} assumption of $\Vert\nabla F(\xi,x^*)-\nabla f(x^*)\Vert$ for some $x^*\in X$ and $\mathsf{L}(\xi)$. We next show a typical example.

\begin{example}[Stochastic quadratic optimization with a random matrix]\label{example:1}
Consider the stochastic \emph{quadratic optimization} problem
$$
\min_{x\in\re^d}\left\{f(x):=\frac{1}{2}\left\langle x,\esp[A(\xi)]x\right\rangle+\left\langle\esp[b(\xi)],x\right\rangle\right\},
$$
where $A:\Xi\rightarrow\re^{d\times d}$ is a semi-positive definite \emph{random matrix} with nonnull mean and finite second moment and $b:\Xi\rightarrow\re^d$ is an integrable random vector. This is the case, e.g., of Least-Squares Regression \cite{dieuleveut:flammarion:bach2016,flammarion:bach2017}. After some straightforward calculation, we can derive the expression
$
\sigma(x)^2=\langle x,Bx\rangle,
$
for any $x\in \re^d$, where $B:=\sum_{i=1}^d\cov[A_i(\xi)]$, the vectors $A_1(\xi),\ldots,A_d(\xi)$ are the rows of $A(\xi)$ and $\cov[q(\xi)]$ defines the covariance matrix of the random vector $q:\Xi\rightarrow\re^d$. Let $N(B)$ denote the kernel of $B$ and $N(B)^\perp$ denote its orthogonal complement. Given $x\in\re^d$, we denote by $x_B$ the orthogonal projection of $x$ onto $N(B)^\perp$. Since $B$ is semi-positive definite, from the spectral theorem we get
$$
\sigma(x)^2\ge\lambda_+(B)\Vert x_B\Vert^2,\quad \forall x\in\re^d,
$$
where $\lambda_+(B)$ is the smallest nonnull eigenvalue of $B$. This shows \emph{$\sigma(\cdot)^2$ is quadratically coercive on the linear subspace $N(B)^\perp$}. In particular, if $B$ is positive definite, $\sigma(x)^2\ge\lambda_+(B)\Vert x\Vert^2$, for all $x\in\re^d$. We thus conclude that  \eqref{equation:uniform:variance:intro} is not valid if $X\cap N(B)^\perp$ is unbounded. In particular, it is not true in the unconstrained case ($X=\re^d$). Nevertheless, \eqref{equation:lipschitz:random:intro} holds with 
$
\mathsf{L}(\xi):=\sup\left\{\Vert A(\xi)x\Vert:\Vert x\Vert=1\right\}.
$
\end{example}

\subsection{Related work and contributions}

The performance of a SA method can be measured by its \emph{iteration} and \emph{oracle complexities} given a tolerance $\epsilon>0$ on the mean optimality gap. The first is the total number of iterations, a measure for the optimization error, while the second is the total number of samples and oracle calls, a measure for the estimation error. Statistical lower bounds \cite{agarwal:barlett:ravikumar:wainwright2012} show that the optimal oracle complexities are $\mathcal{O}(\epsilon^{-2})$ for the smooth convex class and $\mathcal{O}(\epsilon^{-1})$ for the smooth strongly convex class. Anyhow, an important question that remains is (\textsf{Q}): \emph{What is the optimal iteration complexity such that a (near) optimal oracle complexity is respected}? Related to such question is the ability of method to treat the \emph{oracle's assumptions and variance} with \emph{sampling efficiency}. 

The initial version of the SG method uses \emph{one oracle call per iteration} with stepsizes satisfying $\sum_t\alpha_t=\infty$ and $\sum_t\alpha_t^2<\infty$, typically $\alpha_t=\mathcal{O}(t^{-1})$. A fundamental improvement in respect to estimation error was Polyak-Ruppert's \emph{iterate averaging} scheme \cite{polyak1991,polyak:juditsky1992,ruppert1988,nem:rudin1978}, where \emph{longer stepsizes} $\alpha_t=\mathcal{O}(t^{-\frac{1}{2}})$ are used with a subsequent \emph{final average} of the iterates with weights $\{\alpha_t\}$ (this is sometimes called \emph{ergodic average}). \emph{If one oracle call per iteration is assumed}, such scheme obtains optimal iteration and oracle complexities of $\mathcal{O}(\epsilon^{-2})$ for the smooth convex class and of $\mathcal{O}(\epsilon^{-1})$ for smooth strongly convex class. These are also the size of the final ergodic average, a measure of the additional \emph{averaging effort} implicitly required in iterate averaging schemes. Such methods, hence, are efficient in terms of oracle complexity. Iterate averaging was then extensively explored (see e.g.  \cite{juditsky:nazin:tsybakov:vayatis2005,juditsky:rigollet:tsybakov2008,nesterov:vial2008,nesterov2009,%
nem:jud:lan:shapiro2009,xiao2010,lee:wright2012}). The important work \cite{nem:jud:lan:shapiro2009} exploits the robustness of iterate averaging in SA methods and shows that such schemes can outperform the SAA approach on relevant convex problems. On the strongly convex class, \cite{bach:moulines2011} gives a detailed non-asymptotic robust analysis of Polyak-Ruppert averaging scheme. See also  \cite{hazan:hale2014,rosasco:silvia:vu2014,needell:srebro:ward2015} for improvements.

In the seminal work \cite{nesterov1983} of Nesterov, a novel accelerated scheme for unconstrained smooth convex optimization with an exact oracle is presented obtaining the optimal rate $\mathcal{O}(Lt^{-2})$. This improves upon $\mathcal{O}(t^{-1})$ of standard methods. Motivated by the importance of regularized problems, this result was further generalized for \emph{composite} convex optimization in \cite{nesterov2007,beck:teboulle2009,tseng2008} and in \cite{lan2012}, where the stochastic oracle was considered. Assuming \eqref{equation:uniform:variance:intro} and one oracle call per iteration, \cite{lan2012} obtains iteration and oracle complexities of $\mathcal{O}\left(\sqrt{L\epsilon^{-1}}+\sigma^2\epsilon^{-2}\right)$, allowing larger values of $L$. See also \cite{hu:kwok:pan2009,xiao2010}. In \cite{ghadimi:lan2012,ghadimi:lan2013}, the strongly convex class was considered and iteration and oracle complexities of 
$
\mathcal{O}\left(\sqrt{Lc^{-1}}\ln(\epsilon^{-1})+\sigma^2c^{-1}\epsilon^{-1}\right)
$
were obtained, allowing for larger values of the condition number $L/c$. See  \cite{xiao2010,lee:wright2012,lin:chen:pena2014} for considerations on sparse solutions.

Motivated by question (\textsf{Q}), a rapidly growing line of research is the development of methods which use \emph{more than one oracle call per iteration}. Current examples are the \emph{gradient aggregation} and \emph{dynamic sampling} methods (see \cite{bottou:curtis:nocedal2016}, Section 5). Designed for \emph{finitely supported distributions} of the form \eqref{problem:SAA} (as found in the ERM problem of machine learning), gradient aggregation methods reduce the variance by combining in a specific manner eventual exact computation (or storage) of gradients and eventual iterate averaging (or randomization schemes) with frequent gradient sampling. Their complexity bounds typically grow with the sample size $N$ in \eqref{problem:SAA}. See e.g. \cite{leroux:schmidt:bach2012,schmidt:leroux:bach2013,defazio:bach:lacoste-julien2014,%
johnson:zhang2013,xiao:zhang2014,gurbuzbalaban:ozdaglar:parrilo2015,lin:mairal:harchaoui2015,%
woodworth:srebro2016,allen-zhu2016} and references therein. Designed for solving problem \eqref{equation:expected:valued:objective}-\eqref{problem:min:intro} with an online data acquisition (as is the case in many stochastic and simulation optimization problems based on Monte Carlo methods), dynamic sampling methods reduce variance by estimating the gradient via an \emph{empirical average} associated to a sample whose size (\emph{mini-batch}) is increased at every iteration. Their complexity bounds typically grow with the oracle variance $\sigma^2$ in \eqref{equation:uniform:variance:intro}. See \cite{byrd:chiny:nocedal:wu2012,%
friedlander:schmidt2013,zhang:yang:jin:he2013,ghadimi:lan2016,iusem:jofre:oliveira:thompson2017,%
iusem:jofre:oliveira:thompson2016} and references therein. An essential point related to (\textsf{Q}) is if such increased effort in computation per iteration used is worth. A nice fact is that current gradient aggregation and dynamic sampling methods achieve the order of the deterministic optimal iteration complexity with the same (near) optimal oracle complexity and averaging effort of standard iterate averaging schemes. \emph{In this sense}, gradient aggregation and dynamic sampling methods are thus a more efficient option.

The contributions of this work may be summarized as follows:
\begin{quote}
\emph{We show the standard global assumption \eqref{equation:uniform:variance:intro} used to obtain non-asymptotic convergence can be replaced by the significantly weaker local assumption \eqref{equation:nonuniform:variance:intro}. The bounds derived under \eqref{equation:nonuniform:variance:intro} and dynamic sampling depend on the local variances $\sigma(x^*)^2$ and $\sigma_L^2$ for $x^*\in\solset(f,\varphi)$. Moreover, such bounds can be tuned so to resemble previous bounds obtained if it was supposed that \eqref{equation:uniform:variance:intro} holds but replacing $\sigma^2$ with $\sigma(x^*)^2$. In particular, if \eqref{equation:uniform:variance:intro} does hold then our bounds are significantly sharper then previous ones since typically $\sigma(x^*)^2\ll\sigma^2$. These type of results can be seen as a \emph{variance localization property}.} 
\end{quote}
We next state our contributions more precisely.

(i) \textsf{Stochastic smooth non-strongly convex optimization}: for this class of problems, we propose \textsf{Algorithm \ref{algorithm:smooth:convex}} stated in Section \ref{section:smooth:convex} which is a dynamic sampled SA version of the accelerated method FISTA \cite{beck:teboulle2009}. We show \textsf{Algorithm \ref{algorithm:smooth:convex}} achieves the optimal iteration complexity of $\mathcal{O}(\epsilon^{-\frac{1}{2}})$ with a near-optimal oracle complexity and average effort of $\mathcal{O}(\epsilon^{-2})[\ln(\epsilon^{-\frac{1}{2}})]^2$ under the more general Assumption \ref{assump:oracle:multiplicative:noise} of \emph{multiplicative noise}. These are online bounds.\footnote{That is, without an a priori known number of iterations.} The factor $[\ln(\epsilon^{-\frac{1}{2}})]^2$ can be removed for offline bounds. This improves upon \cite{ghadimi:lan2016}, where accelerated dynamic sampling schemes obtain the same complexities but with the more restrictive assumption \eqref{equation:uniform:variance:intro} of an oracle with \emph{uniformly bounded variance}. Hence, we do not implicitly require an additive noise model nor boundedness of $X$. Our rates depend on the local variances $\sigma(x^*)^2$ for $x^*\in\mbox{S}(f,\varphi)$ and $\sigma_L^2$. Interestingly, the stepsize and sampling rate policies can be tuned so that our bounds resemble those obtained if \emph{it was supposed \eqref{equation:uniform:variance:intro} holds} but with $\sigma^2$ replaced by $\sigma(x^*)^2$ for some $x^*\in\mbox{S}(f,\varphi)$. Hence, in case \eqref{equation:uniform:variance:intro} indeed holds, our bounds are much sharper than \cite{ghadimi:lan2016} since typically $\sigma(x^*)^2\ll\sigma^2$. See Example \ref{example:1} in Section \ref{subsection:oracle:assumptions} and Theorem \ref{thm:rate:smooth:convex:multiplicative:noise}, Corollary \ref{cor:rate:complexity:smooth:convex:multiplicative:noise} and Remark \ref{remark:constants:smooth:convex} in Section \ref{section:smooth:convex}. Additionally, \textsf{Algorithm \ref{algorithm:smooth:convex}} is sparsity preserving since it is based on FISTA. Differently, the methods in \cite{ghadimi:lan2016} are not sparsity preserving since they are based on the AC-SA method of \cite{lan2012} which uses iterate averaging (see  \cite{xiao2010,lee:wright2012,lin:chen:pena2014} for further observations). Finally, we do not use randomization procedures as in \cite{ghadimi:lan2016} which require additional sampling effort of an auxiliary random variable. 

(ii) \textsf{Stochastic smooth strongly convex optimization}: for this class of problems we propose \textsf{Algorithm \ref{algorithm:smooth:strongly:convex}} given in Section \ref{section:smooth:strongly:convex} which is a dynamic sampled version of the stochastic proximal gradient method. We show \textsf{Algorithm \ref{algorithm:smooth:strongly:convex}} achieves the optimal iteration complexity of $\mathcal{O}(\ln(\epsilon^{-1}))$ with an optimal oracle complexity and average effort of $\mathcal{O}(\epsilon^{-1})$ under the more general Assumption \ref{assump:oracle:multiplicative:noise}. This improves upon \cite{byrd:chiny:nocedal:wu2012} where the same complexities are obtained for dynamic sampling schemes but with the more restrictive assumption \eqref{equation:uniform:variance:intro}. Also, no regularization nor constraints are addressed in \cite{byrd:chiny:nocedal:wu2012} (i.e. $\varphi\equiv0$ and $X=\re^d$). Again, a consequence of our results is that no boundedness or additive noise assumptions are required. In terms of variance our bounds are local: they depend on $\sigma(x^*)^2$ and $\mathsf{Q}(x^*,t_0):=\sigma_L^2\max_{1\le t\le t_0-1}\esp[\Vert x^\tau-x^*\Vert^2]$ for some small number $t_0\sim\ln(\kappa/N_0)$, where $N_0$ is the initial batch size, $\kappa:=\frac{L}{c}$ is the condition number and $x^*$ is the unique solution. Interestingly, if an upper bound on $\kappa$ is known, the stepsize and sampling rate policies can be tuned so that our bounds resemble those obtained if \emph{it was supposed \eqref{equation:uniform:variance:intro} holds} but with $\sigma^2$ replaced by $\sigma(x^*)^2$ (without dependence on $\mathsf{Q}(t_0,x^*)$). We thus conclude a \emph{regularization property} for strongly convex functions: the more \emph{well-conditioned} the optimization problem is or the more aggressive the \emph{initial batch size} is, the more our error bounds approach those obtained if it was supposed \eqref{equation:uniform:variance:intro} holds but with $\sigma^2$ replaced by $\sigma(x^*)^2$ (with this bound exactly achieved if an upper bound on $\kappa$ is known). In case \eqref{equation:uniform:variance:intro} indeed holds, our bounds are thus much sharper than \cite{byrd:chiny:nocedal:wu2012} since typically $\sigma(x^*)^2\ll\sigma^2$ and\footnote{We remark that $\max_{1\le\tau\le T}\esp[\Vert x^\tau-x^*\Vert^2]\ll\diam(X)^2$ always holds after a \emph{finite} number $T$ of iterations.} $\mathsf{Q}(t_0,x^*)\ll\sigma^2$ if  $\max_{1\le t\le t_0-1}\esp[\Vert x^\tau-x^*\Vert^2]\ll\diam(X)^2$. See Example \ref{example:1} in Section \ref{subsection:oracle:assumptions} and Theorem \ref{thm:rate:strong:convex}, Corollary \ref{cor:rate:complexity:strong:convex} and Remark \ref{remark:constants:strongly:convex} in Section \ref{section:smooth:strongly:convex}.

Let's call a SA method  (near) optimal if it achieves the order of the deterministic optimal iteration complexity with (near) optimal oracle complexity and average cost. To the best of our knowledge, we give for the first time (near) optimal SA methods for stochastic smooth CO with an oracle with \emph{multiplicative noise} and a \emph{general heavy-tailed distribution} (with finite 2nd moment). This is an important improvement since, in principle, it is not obvious that SA methods can converge if the oracle has an unbounded variance satisfying \eqref{equation:unbounded:variance}. This is even less obvious when using acceleration a la FISTA since, in that case, an extrapolation is computed with divergent weights $\beta_t=\mathcal{O}(t)$ (see Theorem \ref{thm:rate:smooth:convex:multiplicative:noise} and Remark \ref{remark:constants:smooth:convex}). Also, the introduction of a regularization term $\varphi$ and constraints is nontrivial in the setting of a multiplicative noise. The main reason for obtaining non-asymptotic convergence under Assumption \ref{assump:oracle:multiplicative:noise} and unbounded gradients is the use of dynamic sampling with a careful sampling rate, i.e., one that uses the same oracle complexity of the standard iterate averaging scheme. In these methods typically \emph{one ergodic average of iterates} with size $T$ at the \emph{final $T$-th iteration} is used. Differently, in dynamic sampling schemes \emph{local empirical averages of gradients} with smaller and increasing sizes are distributed \emph{along iterations}.\footnote{The possibility of distributing the empirical averages along iterations is possible due to the on-line nature of the SA method. This is not shared by the SAA methodology which is an off-line procedure.} This is also the reason our bounds depend on local variances at points of the trajectory of the method and at points of S$(f,\varphi)$ (but not on the whole $X$). Such results are not shared by the SA method with constant $N_k$ for ill-conditioned problems nor for the SAA method.

We review some works besides \cite{ghadimi:lan2016,byrd:chiny:nocedal:wu2012}. A variation of Assumption \ref{assump:oracle:multiplicative:noise} was proposed by Iusem, Jofr\'e, Oliveira and Thompson in \cite{iusem:jofre:oliveira:thompson2017}, but their method is tailored at solving monotone variational inequalities with the extragradient method. Hence, on the class of smooth convex functions, the suboptimal iteration complexity of $\mathcal{O}(\epsilon^{-1})$ is achieved with the use of an additional proximal step (not required for optimization). In  \cite{dieuleveut:flammarion:bach2016,flammarion:bach2017}, the assumption of multiplicative noise is also analyzed but their rate analysis is for the special class of stochastic convex quadratic objectives as in Example \ref{example:1}, where the main motivation is to study linear LSR problems. They obtain offline iteration and oracle complexities of $\mathcal{O}(\epsilon^{-1})$ on the class of quadratic convex functions using Tykhonov regularization with iterate averaging. Differently, our analysis is optimal on the general class of smooth convex functions. See also the recent works \cite{jain:kakade:kidambi:netrapalli:sidford2016,jain:kakade:kidambi:netrapalli:sidford2017} on LSR problems.

We focus now on the class of smooth strongly convex functions. In \cite{friedlander:schmidt2013} the dynamic sampled SG method is also analyzed. However, their analysis strongly relies on finitely supported distributions as in aggregated methods, no oracle complexity bounds are provided and it is assumed no regularization nor constraints ($\varphi\equiv0$ and $X=\re^d$). In \cite{zhang:yang:jin:he2013}, a method using $\ln(\epsilon^{-1})$ projections for smooth strongly CO is proposed but still assuming boundedness of the oracle and obtaining the suboptimal iteration complexity $\mathcal{O}(\epsilon^{-1})$. The works  \cite{bach:moulines2011,rosasco:silvia:vu2014,needell:srebro:ward2015} do not require the assumption \eqref{equation:uniform:variance:intro} and cope with multiplicative noise. However, their iteration and oracle complexities are $\mathcal{O}(\epsilon^{-1})$ and, hence, suboptimal when compared to our method.

Finally, we compare the results of item (ii) with \cite{frostig:ge:kakade:sidford2015}, where also dynamic sampled optimal methods for stochastic smooth strongly convex optimization are derived (with knowledge of $\kappa$). The class of smooth functions analyzed in \cite{frostig:ge:kakade:sidford2015} are smaller, requiring them to be \emph{twice differentiable}. With respect to statistical assumptions, their analysis allow multiplicative noise but it uses a condition much stronger than Assumption \ref{assump:oracle:multiplicative:noise}. Indeed, they assume a.e. $\nabla F(\cdot,\xi)$ is $\mathsf{L}(\xi)$-Lipschitz continuous with a \emph{bounded Lipschitz modulus}, that is, 
$
\sup_{\xi\in\Xi}\mathsf{L}(\xi)<\infty.
$
As a consequence, $\mathsf{L}(\cdot)-\esp[\mathsf{L}(\xi)]$ is a sub-Gaussian random variable (whose tail decreases exponentially fast).\footnote{We say a centered random variable $q:\Xi\rightarrow\re$ is sub-Gaussian with parameter $\sigma^2$ if $\esp\left\{e^{uq(\xi)}\right\}\le e^{\frac{\sigma^2u^2}{2}}$ for all $u\in\re$. We note that a centered Gaussian variable $N$ with variance $\sigma^2$ satisfies $\esp\left\{e^{uN(\xi)}\right\}=e^{\frac{\sigma^2u^2}{2}}$ for all $u\in\re$.} From \eqref{equation:lipschitz:random:intro}, this implies that $\nabla F(x,\cdot)$ is also sub-Gaussian. In our Assumption \ref{assump:oracle:multiplicative:noise}, we allow heavy-tailed distributions (with finite 2nd moment). Precisely, for any $x\in X$, we only require that $\esp[\Vert\nabla F(x,\cdot)\Vert^2]<\infty$ so that the fluctuations of $\nabla F(x,\cdot)$ can be much heavier than a Gaussian random variable. The work in \cite{frostig:ge:kakade:sidford2015} does not consider regularization nor constraints (i.e., $\varphi\equiv0$ and $X=\re^d$), ignoring effects of $\varphi$ and $X$ on the oracle's variance.\footnote{In the case that $X=\re^d$ and $\varphi\equiv0$, the unique solution $x^*$ satisfies $\nabla f(x^*)=0$ so that $\sigma(x^*)^2=\esp[\Vert\nabla F(x^*,\xi)\Vert^2]$.} Finally, their methods differ significantly from ours. Indeed, the methods in \cite{frostig:ge:kakade:sidford2015} are SA versions of the SVRG method of \cite{johnson:zhang2013,xiao:zhang2014} originally designed for finite-sum objectives. Hence, besides dynamic mini-batching they require in every iteration an inner loop of $m$ iterations to further reduce the variance of the gradients. This implies the need of a randomization scheme and, at least, an additional $m\ge 400\cdot3^6\kappa$ number of samples per iteration (which can be large if $\kappa\gg1$. See \cite{frostig:ge:kakade:sidford2015}, Corollary 4). Our method is solely based on the simple stochastic gradient method and we only use dynamic mini-batching to reduce variance with no additional randomization and sampling.

In Section \ref{section:preliminaries:notation} preliminaries and notation are presented while in Sections \ref{section:smooth:convex}-\ref{section:smooth:strongly:convex} our convergence theory is presented for non-strongly and strongly convex problems respectively. Technical results are proved in the Appendix.

\section{Preliminaries and notation}\label{section:preliminaries:notation}

For $x,y\in\re^n$, we denote $\langle x,y\rangle$ the standard inner product, and $\Vert x\Vert=\sqrt{\langle x,x\rangle}$ the correspondent Euclidean norm. Given $C\subset\re^n$ and $x\in\re^n$, we use the notation $\dist(x,C):=\inf\{\Vert x-y\Vert:y\in C\}$. Given sequences $\{x^k\}$ and $\{y^k\}$, we use the notation $x^k=\mathcal{O}(y^k)$ or $\Vert x^k\Vert\lesssim\Vert y^k\Vert$ to mean that there exists a constant $C>0$ 
such that $\Vert x^k\Vert\le C\Vert y^k\Vert$ for all $k$. The notation 
$\Vert x^k\Vert\sim\Vert y^k\Vert$ means that $\Vert x^k\Vert\lesssim\Vert y^k\Vert$ and $\Vert y^k\Vert\lesssim\Vert x^k\Vert$. Given a $\sigma$-algebra $\alg$ and a random variable $\xi$, we denote by $\esp[\xi]$ and $\esp[\xi|\alg]$ the expectation and conditional expectation, respectively. We write $\xi\in\alg$ for ``$\xi$ is $\alg$-measurable'' and $\xi\perp\perp\alg$ for ``$\xi$ is independent of $\alg$''. We denote by $\sigma(\xi_1,\ldots,\xi_k)$ the $\sigma$-algebra generated by the random variables $\xi_1,\ldots,\xi_k$. Given the random variable $\xi$ and $p\ge1$, $\Lpnorm{\xi}$ is the $L^p$-norm of $\xi$ and 
$
\Lpnorm{\xi\,|\alg}:=\sqrt[p]{\esp\left[|\xi|^p\,|\alg\right]}
$
is the $L_p$-norm of $\xi$ conditional to the $\sigma$-algebra $\alg$. By $\lfloor x\rfloor$ and $\lceil x\rceil$ we mean the lowest integer greater and the highest integer lower than $x\in\re$, respectively. We use the notation $[m]:=\{1,\ldots,m\}$ for $m\in\mathbb{N}$ and $\mathbb{N}_0:=\mathbb{N}\cup\{0\}$. Given $a,b\in\re$, $a\vee b:=\max\{a,b\}$.

Given a function $p:X\rightarrow\re$, $y\in X$ and $v\in\re^d$, we define
\begin{eqnarray}
z\mapsto\ell_p(y,v;z)&:=&p(y)+\langle v,z-y\rangle,\label{equation:linearization:directed}
\end{eqnarray}
i.e., the linearization of $p$ at the point $y$ and direction $v$. If moreover 
$p$ is differentiable, we define
\begin{eqnarray}
z\mapsto\ell_p(y;z):=p(y)+\langle \nabla p(y),z-y\rangle,\label{equation:linearization}
\end{eqnarray}
i.e., the linearization of $p$ at $y$. The \emph{prox-mapping} with respect to $X$ and a given convex function $\varphi:X\rightarrow\re$ is defined as 
$
P_y^\varphi[u]:=\argmin_{x\in X}\left\{\langle u,x-y\rangle+\frac{1}{2}\Vert x-y\Vert^2+\varphi(x)\right\}.
$
The following two properties are well known.
\begin{lemma}\label{lemma:proximal inequality}
Let $p:X\rightarrow\re$ be a convex function and $\alpha>0$. For any $y\in X$, if
$
z\in\argmin_{X}\left\{p+\frac{1}{2\alpha}\Vert \cdot-y\Vert^2\right\},
$
then
$$
p(z)+\frac{1}{2\alpha}\Vert z-y\Vert^2\le p(x)+\frac{1}{2\alpha}\Vert x-y\Vert^2-\frac{1}{2\alpha}\Vert x-z\Vert^2,\quad\forall x\in X.
$$
\end{lemma}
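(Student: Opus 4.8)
The plan is to exploit the first-order optimality of $z$ as the constrained minimizer over $X$ of the strongly convex objective $\Phi:=p+\frac{1}{2\alpha}\Vert\cdot-y\Vert^2$, combined with the convexity of $p$. Since the quadratic term is $\frac{1}{\alpha}$-strongly convex while $p$ is merely convex (possibly nonsmooth), I would work with subgradients of $p$ rather than gradients.

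First I would record the optimality condition. Because $\Phi$ is finite and convex on $X$ and $z\in\argmin_X\Phi$, the subdifferential sum rule (applicable since the quadratic part is everywhere finite and differentiable) furnishes a subgradient $v\in\partial p(z)$ with $-\big(v+\frac{1}{\alpha}(z-y)\big)\in N_X(z)$, i.e.
\[
\left\langle v+\tfrac{1}{\alpha}(z-y),\,x-z\right\rangle\ge0,\qquad\forall x\in X.
\]
Second, I would invoke convexity of $p$ in the form $p(x)\ge p(z)+\langle v,x-z\rangle$ and combine it with the displayed inequality to eliminate $v$, obtaining $p(x)-p(z)\ge\frac{1}{\alpha}\langle y-z,x-z\rangle$ for every $x\in X$. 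Third, I would substitute the polarization identity $\langle y-z,x-z\rangle=\frac{1}{2}\big(\Vert y-z\Vert^2+\Vert x-z\Vert^2-\Vert x-y\Vert^2\big)$ and rearrange, which delivers exactly the claimed three-point inequality once the quadratic terms are moved to the appropriate sides.

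I do not expect a genuine obstacle, as this is the standard prox / three-point inequality. The only points meriting care are (i) justifying the subgradient optimality condition for the \emph{nonsmooth} convex $p$ over the constraint set $X$, handled by the sum rule as noted above, and (ii) the sign bookkeeping in the polarization identity so that the term $-\frac{1}{2\alpha}\Vert x-z\Vert^2$ lands on the right-hand side. Alternatively, the conclusion follows in one line from the quadratic-growth property $\Phi(x)\ge\Phi(z)+\frac{1}{2\alpha}\Vert x-z\Vert^2$ enjoyed by a $\frac{1}{\alpha}$-strongly convex function at its constrained minimizer, which is itself an immediate consequence of strong convexity together with the same optimality condition.
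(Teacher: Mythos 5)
Your proof is correct. Note that the paper does not actually prove this lemma---its ``proof'' consists of the single line ``See Lemma 1 in \cite{lan2012}''---so there is no in-paper argument to compare against; your derivation (subgradient optimality condition obtained via the sum rule, convexity of $p$ to eliminate the subgradient $v$, then the polarization identity $\langle y-z,x-z\rangle=\tfrac{1}{2}\bigl(\Vert y-z\Vert^2+\Vert x-z\Vert^2-\Vert x-y\Vert^2\bigr)$) is precisely the standard argument behind the cited result, and the sign bookkeeping lands the term $-\tfrac{1}{2\alpha}\Vert x-z\Vert^2$ where it should. Two small confirmations: the qualification needed for the sum rule is indeed automatic here, since the quadratic term is finite and continuous on all of $\re^d$ (extend $p$ by $+\infty$ off $X$ and apply Moreau--Rockafellar); and your one-line alternative via the quadratic-growth property $\Phi(x)\ge\Phi(z)+\tfrac{1}{2\alpha}\Vert x-z\Vert^2$ of the $\tfrac{1}{\alpha}$-strongly convex objective at its constrained minimizer is an equally valid, arguably cleaner, route to the same inequality.
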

\begin{proof}
See Lemma 1 in \cite{lan2012}.\qed
\end{proof}

\begin{lemma}\label{lemma:inner:upper:approximation:inequality}
Let $p:X\rightarrow\re$ be a smooth convex function with $L$-Lipschitz gradient. Set $c_p:=c>0$ if $p$ is $c$-strongly convex and $c_p=0$ otherwise. Then the following relations hold:
\begin{eqnarray*}
\ell_p(y;x)+\frac{c_p}{2}\Vert x-y\Vert^2\le p(x)\le \ell_p(y;x)+\frac{L}{2}\Vert x-y\Vert^2,\forall x,y\in X.
\end{eqnarray*}
\end{lemma}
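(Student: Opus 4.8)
The plan is to establish the two inequalities separately, since each is a classical consequence of one of the two structural hypotheses on $p$, and to treat the lower bound by cases according to whether $p$ is strongly convex.

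For the lower bound, when $c_p=c>0$ the inequality $\ell_p(y;x)+\frac{c}{2}\Vert x-y\Vert^2\le p(x)$ is precisely the defining inequality \eqref{equation:strong:convexity} of $c$-strong convexity, applied to $p$ in place of $f$ and with the roles of $x$ and $y$ interchanged; hence nothing is left to prove in this case. When $c_p=0$ the claim reduces to the gradient inequality $\ell_p(y;x)\le p(x)$ for a differentiable convex function, which I would recover directly from convexity: writing $p\bigl(y+t(x-y)\bigr)\le(1-t)p(y)+t\,p(x)$ for $t\in(0,1)$ and rearranging gives
\[
\frac{p\bigl(y+t(x-y)\bigr)-p(y)}{t}\le p(x)-p(y),
\]
and letting $t\downarrow 0$ the left-hand side converges to $\langle\nabla p(y),x-y\rangle$ by differentiability, yielding the assertion.

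For the upper bound, which is the standard descent lemma, I would invoke the $L$-Lipschitz continuity of $\nabla p$ from \eqref{equation:smoothness:intro}. The key step is the integral identity
\[
p(x)-p(y)=\int_0^1\langle\nabla p\bigl(y+t(x-y)\bigr),x-y\rangle\,dt,
\]
obtained by the fundamental theorem of calculus applied to $t\mapsto p\bigl(y+t(x-y)\bigr)$ along the segment joining $y$ and $x$, which lies in $X$ by convexity; its validity needs only the continuity of $\nabla p$ along that segment, guaranteed by smoothness. Subtracting $\langle\nabla p(y),x-y\rangle=\int_0^1\langle\nabla p(y),x-y\rangle\,dt$ and bounding the integrand by Cauchy--Schwarz followed by \eqref{equation:smoothness:intro} gives
\[
p(x)-\ell_p(y;x)\le\int_0^1\Vert\nabla p\bigl(y+t(x-y)\bigr)-\nabla p(y)\Vert\,\Vert x-y\Vert\,dt\le\int_0^1 Lt\,\Vert x-y\Vert^2\,dt=\frac{L}{2}\Vert x-y\Vert^2,
\]
which is exactly the claimed upper bound.

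There is no genuine obstacle here: both estimates are textbook facts, the lower bound being immediate from \eqref{equation:strong:convexity} (or from convexity when $c_p=0$), and the only mildly technical ingredient is the integral identity underlying the descent lemma, whose justification requires nothing beyond the continuity and integrability of $\nabla p$ along the segment $[y,x]$.
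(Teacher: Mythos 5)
Your proof is correct. Note, however, that the paper does not actually prove this lemma internally: its ``proof'' is just a pointer to Lemma 2 of \cite{lan2012}, so there is no in-paper argument to compare against, and what you have written is precisely the standard self-contained derivation that such a citation tacitly relies on. Your treatment of the lower bound is right on both branches: for $c_p=c>0$ it is the definition \eqref{equation:strong:convexity} with the roles of $x$ and $y$ interchanged, and for $c_p=0$ the difference-quotient argument recovers the gradient inequality from convexity alone. The upper bound via the fundamental theorem of calculus along the segment $[y,x]$, Cauchy--Schwarz, and the Lipschitz bound \eqref{equation:smoothness:intro} is the classical descent lemma. Your version has the small additional merit of making explicit where the standing hypotheses enter --- convexity of $X$ keeps the segment inside the domain where \eqref{equation:smoothness:intro} holds, and continuity of $\nabla p$ along it justifies the integral identity --- details the citation leaves implicit. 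So the mathematical content is the same textbook material; the only difference is that you carry out the argument rather than delegate it.
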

\begin{proof}
See Lemma 2 in \cite{lan2012}.\qed
\end{proof}

For smooth $c$-strongly convex functions $p:X\rightarrow\re$, the following fundamental bounds hold:
\begin{eqnarray}
\frac{c}{2}\Vert x-x^*\Vert^2\le p(x)-p(x^*)\le\frac{1}{2c}\Vert\nabla p(x)\Vert^2,\forall x\in X,\label{equation:PL:inequality:strongly:convex}
\end{eqnarray}
where $x^*$ is the unique solution of $\min_{x\in\re^d}p(x)$. The first inequality above is an immediate consequence of Lemma \ref{lemma:inner:upper:approximation:inequality} and the first order necessary optimality condition. For the second inequality see, e.g., \cite{bottou:curtis:nocedal2016}, relation (4.12). We will use the following lemma many times. The proof is left to the Appendix.
\begin{lemma}[Oracle's variance decay under multiplicative noise]\label{lemma:oracle:variance:decay}
Suppose \eqref{equation:expected:valued:objective} and Assumption \ref{assump:oracle:multiplicative:noise} hold. Given an i.i.d. sample $\{\xi_j\}_{j=1}^N$ drawn from the distribution $\probn$ and $x\in\re^d$, set
$$
\epsilon(x):=\sum_{j=1}^N\frac{\nabla F(x,\xi_j)-\nabla f(x)}{N}.
$$
Then
$
\Lnorm{\Vert\epsilon(x)\Vert}\le\frac{\sigma(x^*)+\sigma_L\Vert x-x^*\Vert}{\sqrt{N}}.
$
\end{lemma}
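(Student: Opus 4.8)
The plan is to reduce the claim to the classical fact that the second moment of an average of $N$ i.i.d.\ centered random vectors scales like $N^{-1}$ times the per-sample variance, and then to substitute the affine bound of Assumption \ref{assump:oracle:multiplicative:noise} at the very end. First I would fix the (deterministic) point $x\in\re^d$ and, for each $j\in[N]$, set $Y_j:=\nabla F(x,\xi_j)-\nabla f(x)$, so that $\epsilon(x)=\frac{1}{N}\sum_{j=1}^N Y_j$. Since the oracle is unbiased, $\esp[\nabla F(x,\xi_j)]=\nabla f(x)$, whence $\esp[Y_j]=0$ for every $j$. By Assumption \ref{assump:iid:sampling} the samples $\{\xi_j\}_{j=1}^N$ are i.i.d.\ and independent of the input $x$, so the vectors $\{Y_j\}_{j=1}^N$ are themselves i.i.d.\ and centered, and each satisfies $\esp[\Vert Y_j\Vert^2]=\sigma(x)^2$ by the definition of $\sigma(x)$.

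Next I would expand the second moment of the empirical error as a double sum and use orthogonality of the centered summands. Concretely,
$$
\esp\left[\Vert\epsilon(x)\Vert^2\right]=\frac{1}{N^2}\sum_{i=1}^N\sum_{j=1}^N\esp\big[\inner{Y_i}{Y_j}\big].
$$
For $i\neq j$, independence together with $\esp[Y_i]=\esp[Y_j]=0$ gives $\esp[\inner{Y_i}{Y_j}]=\inner{\esp[Y_i]}{\esp[Y_j]}=0$, so only the $N$ diagonal terms survive, each equal to $\sigma(x)^2$. Hence $\esp[\Vert\epsilon(x)\Vert^2]=\sigma(x)^2/N$, that is, $\Lnorm{\Vert\epsilon(x)\Vert}=\sigma(x)/\sqrt{N}$.

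To finish, I would apply the multiplicative noise bound \eqref{equation:nonuniform:variance:intro} of Assumption \ref{assump:oracle:multiplicative:noise}, namely $\sigma(x)\le\sigma(x^*)+\sigma_L\Vert x-x^*\Vert$, and divide by $\sqrt{N}$, which yields exactly the asserted inequality. There is no serious obstacle here: the argument is the standard variance-of-the-sample-mean computation, and the only points needing care are bookkeeping ones — ensuring the cross terms vanish (which requires both independence, from Assumption \ref{assump:iid:sampling}, and centering, from unbiasedness) and recognizing that the diagonal terms reproduce precisely $\sigma(x)^2$. The single conceptual input beyond the i.i.d.\ computation is the substitution of Assumption \ref{assump:oracle:multiplicative:noise} at the last step, which converts the pointwise deviation $\sigma(x)$ into the desired affine local bound in terms of $\sigma(x^*)$ and $\sigma_L$.
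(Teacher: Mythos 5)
Your proposal is correct and follows essentially the same route as the paper's own proof: both expand $\esp[\Vert\epsilon(x)\Vert^2]$ as a double sum, use independence plus centering (unbiasedness) to annihilate the cross terms, obtain the identity $\Lnorm{\Vert\epsilon(x)\Vert}=\sigma(x)/\sqrt{N}$, and then substitute the bound $\sigma(x)\le\sigma(x^*)+\sigma_L\Vert x-x^*\Vert$ from Assumption \ref{assump:oracle:multiplicative:noise}. The only cosmetic difference is that the paper normalizes each summand by $N$ before summing and justifies the vanishing of cross terms coordinatewise in a footnote, whereas you normalize at the end and argue via $\esp[\inner{Y_i}{Y_j}]=\inner{\esp[Y_i]}{\esp[Y_j]}$ — the same computation.
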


\section{Smooth convex optimization with multiplicative noise and acceleration}\label{section:smooth:convex}

We propose \textsf{Algorithm \ref{algorithm:smooth:convex}} for the composite problem \eqref{problem:min:composite} assuming an stochastic oracle satisfying \eqref{equation:expected:valued:objective} and Assumption \ref{assump:oracle:multiplicative:noise} of a multiplicative noise in the case the objective $f$ is smooth convex satisfying \eqref{equation:smoothness:intro}.
\begin{algorithm}
\caption{Stochastic approximated FISTA with dynamic mini-batching}\label{algorithm:smooth:convex}
\begin{algorithmic}[1]
  \scriptsize
  \STATE INITIALIZATION: initial iterates $y^1=z^0$, positive stepsize sequence $\{\alpha_t\}$, positive weights $\{\beta_t\}$ and sampling rate $\{N_t\}$.
  \STATE ITERATIVE STEP: Given $y^t$ and $z^{t-1}$, generate i.i.d. sample $\xi^t:=\{\xi_j^t\}_{j=1}^{N_t}$ from $\probn$ independently from previous samples. Compute
  $$ 
  F_t'(y^t,\xi^t):=\frac{1}{N_t}\sum_{j=1}^{N_t}\nabla F(y^t,\xi^t_j).
  $$
  Then set
  	\begin{eqnarray}
	z^t&:=&P^{\alpha_t\varphi}_{y^t}\left[\alpha_t F'(y^t,\xi^t)\right]=
	\argmin_{x\in X}\left\{\ell_f\left(y^{t},F'(y^{t},\xi^{t});x\right)+\frac{1}{2\alpha_{t}}\Vert x-y^{t}\Vert^2+\varphi(x)\right\},\label{equation:fista:eq1}\\
	y^{t+1}&:=&\frac{(\beta_t-1)}{\beta_{t+1}}(z^t-z^{t-1})+z^t,\label{equation:fist:eq2}
	\end{eqnarray}
\end{algorithmic}
\end{algorithm}
For $t\in\mathbb{N}$, we define the oracle error
$
\epsilon^t := F_t'(y^t,\xi^t)-\nabla f(y^t)
$
and the filtration
$
\alg_t:=\sigma(y^1,\xi^1\ldots,\xi^t).
$

\subsection{Derivation of an error bound}

In the following, it will be convenient to define, for any $t\ge2$,
\begin{eqnarray}
s^{t}:=\beta_t z^t-(\beta_t-1)z^{t-1}.\label{equation:def:sk}
\end{eqnarray}

\begin{proposition}
\label{proposition:error:bound:fista}
Suppose Assumptions \ref{assump:iid:sampling}-\ref{assump:oracle:multiplicative:noise} hold for the problem \eqref{problem:min:composite} satisfying \eqref{equation:expected:valued:objective} and \eqref{equation:smoothness:intro}. Suppose $\{\alpha_t\}$ is non-increasing and
$$
\alpha_t\in\left(0,\frac{1}{L}\right),\quad\beta_t\ge1,\quad\beta_t^2=\beta_{t+1}^2-\beta_{t+1},\quad\forall t\in\mathbb{N}.
$$
Then the sequence generated by \textsf{\emph{Algorithm \ref{algorithm:smooth:convex}}} satisfies: for all $t\in\mathbb{N}$ and $x^*\in\mbox{\emph{S}}(f,\varphi)$, 
\begin{eqnarray*}
2\alpha_{t+1}\beta_{t+1}^2\left[g(z^{t+1})-g^*\right]-2\alpha_t\beta_t^2\left[g(z^t)-g^*\right] &\le &
\Vert s^t-x^*\Vert^2-\Vert s^{t+1}-x^*\Vert^2
+\frac{\alpha_{t+1}\beta_{t+1}^2}{(\alpha_{t+1}^{-1}-L)}\Vert\epsilon^{t+1}\Vert^2\nonumber\\
&&+2\alpha_{t+1}\beta_{t+1}\langle\epsilon^{t+1},x^*-s^t\rangle.
\end{eqnarray*}
\end{proposition}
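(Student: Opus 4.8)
The plan is to first establish a single one-step inequality valid for every $x\in X$, and then telescope it using the FISTA weighting together with a complete-the-square step to control the oracle error. For the one-step bound I would apply Lemma \ref{lemma:proximal inequality} to the convex function $p:=\ell_f(y^t,F'(y^t,\xi^t);\cdot)+\varphi$ with parameter $\alpha_t$, whose minimizer over $X$ is exactly $z^t$ by \eqref{equation:fista:eq1}. Writing $F'(y^t,\xi^t)=\nabla f(y^t)+\epsilon^t$, so that $\ell_f(y^t,F'(y^t,\xi^t);x)=\ell_f(y^t;x)+\langle\epsilon^t,x-y^t\rangle$, I would combine the resulting prox inequality with the $L$-smoothness upper bound $f(z^t)\le\ell_f(y^t;z^t)+\tfrac{L}{2}\|z^t-y^t\|^2$ and the convexity lower bound $\ell_f(y^t;x)\le f(x)$, both from Lemma \ref{lemma:inner:upper:approximation:inequality}. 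After cancelling the linearization and collecting the two $\epsilon^t$ terms into $\langle\epsilon^t,x-z^t\rangle$, this yields for all $x\in X$
\[
g(z^t)-g(x)\le\langle\epsilon^t,x-z^t\rangle+\tfrac{1}{2\alpha_t}\|x-y^t\|^2-\tfrac{1}{2\alpha_t}\|x-z^t\|^2-\tfrac{1}{2}(\alpha_t^{-1}-L)\|z^t-y^t\|^2.
\]

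Next I would apply this inequality at index $t+1$ to the two points $x=x^*$ and $x=z^t$, with weights $\beta_{t+1}$ and $\beta_{t+1}(\beta_{t+1}-1)=\beta_t^2$ respectively (the identity following from $\beta_t^2=\beta_{t+1}^2-\beta_{t+1}$), and add them. With $v_t:=g(z^t)-g^*\ge0$, the left-hand side collapses to $\beta_{t+1}^2 v_{t+1}-\beta_t^2 v_t$. The core of the argument is the algebraic simplification of the right-hand side via the momentum structure. From \eqref{equation:fist:eq2} and the weight recursion one gets $\beta_{t+1}y^{t+1}=s^t+(\beta_{t+1}-1)z^t$, and comparing with $s^{t+1}=\beta_{t+1}z^{t+1}-(\beta_{t+1}-1)z^t$ gives the key identity $\beta_{t+1}(z^{t+1}-y^{t+1})=s^{t+1}-s^t$. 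Feeding this into the three-point expansion $\|p-y^{t+1}\|^2-\|p-z^{t+1}\|^2=\|z^{t+1}-y^{t+1}\|^2+2\langle p-z^{t+1},z^{t+1}-y^{t+1}\rangle$ for $p\in\{x^*,z^t\}$, the weighted quadratic terms telescope into $\tfrac{1}{2\alpha_{t+1}}(\|x^*-s^t\|^2-\|x^*-s^{t+1}\|^2)-\tfrac{1}{2\alpha_{t+1}}\|s^{t+1}-s^t\|^2$, the weighted $\epsilon^{t+1}$ terms collapse to $\beta_{t+1}\langle\epsilon^{t+1},x^*-s^{t+1}\rangle$ (again using $\beta_{t+1}+\beta_t^2=\beta_{t+1}^2$), and the remaining $\tfrac{L}{2}\beta_{t+1}^2\|z^{t+1}-y^{t+1}\|^2$ becomes $\tfrac{L}{2}\|s^{t+1}-s^t\|^2$, so the two $\|s^{t+1}-s^t\|^2$ contributions merge into $-\tfrac12(\alpha_{t+1}^{-1}-L)\|s^{t+1}-s^t\|^2$.

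Two finishing steps remain. First, I would invoke the complete-the-square inequality on the pre-multiplied form: writing $d:=s^{t+1}-s^t$ and $\gamma:=\alpha_{t+1}^{-1}-L>0$, split $\beta_{t+1}\langle\epsilon^{t+1},x^*-s^{t+1}\rangle=\beta_{t+1}\langle\epsilon^{t+1},x^*-s^t\rangle-\beta_{t+1}\langle\epsilon^{t+1},d\rangle$ and bound $-\tfrac{\gamma}{2}\|d\|^2-\beta_{t+1}\langle\epsilon^{t+1},d\rangle\le\tfrac{\beta_{t+1}^2}{2\gamma}\|\epsilon^{t+1}\|^2$ (a Young inequality, valid since $\gamma>0$). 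Multiplying the entire inequality by $2\alpha_{t+1}$ then turns this constant into $\tfrac{\alpha_{t+1}\beta_{t+1}^2}{\alpha_{t+1}^{-1}-L}\|\epsilon^{t+1}\|^2$ and the surviving cross term into $2\alpha_{t+1}\beta_{t+1}\langle\epsilon^{t+1},x^*-s^t\rangle$, matching the claim. Second, to replace the coefficient $\alpha_{t+1}$ in front of $\beta_t^2 v_t$ by $\alpha_t$, I would note $-2\alpha_t\beta_t^2 v_t\le-2\alpha_{t+1}\beta_t^2 v_t$ because $\alpha_{t+1}\le\alpha_t$ (non-increasing stepsizes) and $\beta_t^2 v_t\ge0$; thus the left-hand side with $\alpha_t$ is dominated by the one with $\alpha_{t+1}$, which is exactly where the monotonicity hypothesis is used.

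I expect the main obstacle to be the momentum reduction in the second step, namely verifying $\beta_{t+1}(z^{t+1}-y^{t+1})=s^{t+1}-s^t$ and that the weighted quadratics telescope cleanly into differences of $\|x^*-s^{\cdot}\|^2$, since this is precisely where the extrapolation coefficient $(\beta_t-1)/\beta_{t+1}$ in \eqref{equation:fist:eq2} and the recursion $\beta_t^2=\beta_{t+1}^2-\beta_{t+1}$ must conspire. A secondary point worth flagging is that the final complete-the-square is not merely cosmetic: its purpose is to leave the cross term in terms of $s^t$ rather than $s^{t+1}$, so that $s^t\in\alg_t$ while $\epsilon^{t+1}$ is conditionally centered given $\alg_t$; this martingale-difference structure is what makes the term vanish in expectation when the bound is later summed. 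Finally, the case $t=1$ only requires extending the definition to $s^1:=\beta_1 z^1-(\beta_1-1)z^0$ with $z^0=y^1$, under which the same identities hold verbatim.
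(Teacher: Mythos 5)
Your proposal is correct and takes essentially the same route as the paper's proof: the same one-step inequality obtained from Lemma \ref{lemma:proximal inequality} and Lemma \ref{lemma:inner:upper:approximation:inequality}, the same weighting of the two instances $x=x^*$ and $x=z^t$ by $\beta_{t+1}$ and $\beta_t^2=\beta_{t+1}(\beta_{t+1}-1)$, the same Young's inequality with parameter $\alpha_{t+1}^{-1}-L$, and the same final use of $\alpha_{t+1}\le\alpha_t$ with $v_t\ge0$. The only difference is organizational: you make the identity $\beta_{t+1}(z^{t+1}-y^{t+1})=s^{t+1}-s^t$ explicit and telescope in a single pass, whereas the paper reaches the identical cancellation through repeated applications of its Pythagorean relation \eqref{equation:pythagoras:SA}.
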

\begin{proof}
For convenience of notation, we will use the notation
$
v_t:=g(z^t)-g^*.
$
We start by deriving the following fundamental inequality. We have, for any $t\in\mathbb{N}_0$ and $x\in X$,
\begin{eqnarray}
g(z^{t+1})&\le &\ell_f(y^{t+1};z^{t+1})+\varphi(z^{t+1})+\frac{L}{2}\Vert z^{t+1}-y^{t+1}\Vert^2\nonumber\\
&=&\ell_f\left(y^{t+1},F'(y^{t+1},\xi^{t+1});z^{t+1}\right)+\varphi(z^{t+1})+\frac{L}{2}\Vert z^{t+1}-y^{t+1}\Vert^2
-\langle\epsilon^{t+1},z^{t+1}-y^{t+1}\rangle\nonumber\\
&\le &\ell_f\left(y^{t+1},F'(y^{t+1},\xi^{t+1});x\right)+\varphi(x)+\frac{\alpha_{t+1}^{-1}}{2}\Vert x-y^{t+1}\Vert^2-\frac{\alpha_{t+1}^{-1}}{2}\Vert x-z^{t+1}\Vert^2+\frac{(L-\alpha_{t+1}^{-1})}{2}\Vert z^{t+1}-y^{t+1}\Vert^2\nonumber\\
&&-\langle\epsilon^{t+1},z^{t+1}-y^{t+1}\rangle\nonumber\\
&=&\ell_f(y^{t+1};x)+\varphi(x)+\frac{\alpha_{t+1}^{-1}}{2}\Vert x-y^{t+1}\Vert^2-\frac{\alpha_{t+1}^{-1}}{2}\Vert x-z^{t+1}\Vert^2+\frac{(L-\alpha_{t+1}^{-1})}{2}\Vert z^{t+1}-y^{t+1}\Vert^2\nonumber\\
&&+\langle\epsilon^{t+1},x-y^{t+1}\rangle-\langle\epsilon^{t+1},z^{t+1}-y^{t+1}\rangle\nonumber\\
&\le &g(x)+\frac{\alpha_{t+1}^{-1}}{2}\Vert x-y^{t+1}\Vert^2-\frac{\alpha_{t+1}^{-1}}{2}\Vert x-z^{t+1}\Vert^2+
\frac{(L-\alpha_{t+1}^{-1})}{2}\Vert z^{t+1}-y^{t+1}\Vert^2\nonumber+\langle\epsilon^{t+1},x-z^{t+1}\rangle,\nonumber\\\label{equation:prox:and:approx:inequalities:SA}
\end{eqnarray}
where we used $g(z^{t+1})=f(z^{t+1})+\varphi(z^{t+1})$ and the upper inequality of Lemma \ref{lemma:inner:upper:approximation:inequality} with $p:=f$ in first inequality (by smoothness of $f$), definitions \eqref{equation:linearization:directed}-\eqref{equation:linearization} and $\nabla f(y^{t+1})=F'(y^{t+1},\xi^{t+1})-\epsilon^{t+1}$ in the first equality, the expression \eqref{equation:fista:eq1}
and Lemma \ref{lemma:proximal inequality} with the convex function
$
p:=\ell_f(y^{t+1},F'(y^{t+1},\xi^{t+1});\cdot)+\varphi,
$
$\alpha:=\alpha_{t+1}$, $y:=y^{t+1}$ and $z:=z^{t+1}$ in second inequality, definitions \eqref{equation:linearization:directed}-\eqref{equation:linearization} and $F'(y^{t+1},\xi^{t+1})=\nabla f(y^{t+1})+\epsilon^{t+1}$ in the second equality as well as $g(x)=f(x)+\varphi(x)$ and Lemma \ref{lemma:inner:upper:approximation:inequality} with $p:=f$ in the last inequality (by convexity of $f$).

We now set $x:=z^t$ and $x:=x^*\in\mbox{S}(f,\varphi)$ in \eqref{equation:prox:and:approx:inequalities:SA} obtaining
\begin{eqnarray}
[g(z^{t+1})-g^*]-[g(z^t)-g^*]&=&g(z^{t+1})-g(z^t)\nonumber\\
&\le &\frac{\alpha_{t+1}^{-1}}{2}\Vert z^t-y^{t+1}\Vert^2-\frac{\alpha_{t+1}^{-1}}{2}\Vert z^t-z^{t+1}\Vert^2+
\frac{(L-\alpha_{t+1}^{-1})}{2}\Vert z^{t+1}-y^{t+1}\Vert^2\nonumber\\
&&+\langle\epsilon^{t+1},z^t-z^{t+1}\rangle,\label{equation:error:bound:fista:eq1}\\
g(z^{t+1})-g^*&\le &\frac{\alpha_{t+1}^{-1}}{2}\Vert x^*-y^{t+1}\Vert^2-\frac{\alpha_{t+1}^{-1}}{2}\Vert x^*-z^{t+1}\Vert^2+
\frac{(L-\alpha_{t+1}^{-1})}{2}\Vert z^{t+1}-y^{t+1}\Vert^2\nonumber\\
&&+\langle\epsilon^{t+1},x^*-z^{t+1}\rangle.\label{equation:error:bound:fista:eq2}
\end{eqnarray}

We multiply by $\beta_{t+1}-1$ relation \eqref{equation:error:bound:fista:eq1}, add the result to \eqref{equation:error:bound:fista:eq2} and then further multiply by $2\alpha_{t+1}$ obtaining
\begin{eqnarray}
2\alpha_{t+1}\beta_{t+1}v_{t+1}-2\alpha_{t+1}(\beta_{t+1}-1)v_t&\le &
(\beta_{t+1}-1)\Vert z^t-y^{t+1}\Vert^2-(\beta_{t+1}-1)\Vert z^t-z^{t+1}\Vert^2\nonumber\\
&&+\Vert x^*-y^{t+1}\Vert^2-\Vert x^*-z^{t+1}\Vert^2\nonumber\\
&&+\alpha_{t+1}(L-\alpha_{t+1}^{-1})\beta_{t+1}\Vert z^{t+1}-y^{t+1}\Vert^2\nonumber\\
&&+2\alpha_{t+1}\langle\epsilon^{t+1},(\beta_{t+1}-1)z^t+x^* -\beta_{t+1}z^{t+1}\rangle\nonumber\\
&=&\beta_{t+1}\Vert z^t-y^{t+1}\Vert^2-\beta_{t+1}\Vert z^t-z^{t+1}\Vert^2\nonumber\\
&&+\Vert z^t-z^{t+1}\Vert^2-\Vert z^t-y^{t+1}\Vert^2\nonumber\\
&&+\Vert x^*-y^{t+1}\Vert^2-\Vert x^*-z^{t+1}\Vert^2\nonumber\\
&&+\alpha_{t+1}(L-\alpha_{t+1}^{-1})\beta_{t+1}\Vert z^{t+1}-y^{t+1}\Vert^2\nonumber\\
&&+2\alpha_{t+1}\langle\epsilon^{t+1},(\beta_{t+1}-1)z^t+x^* -\beta_{t+1}z^{t+1}\rangle.\label{equation:error:bound:fista:eq3}
\end{eqnarray}

We will use repeatedly the following Pythagorean relation:
\begin{equation}\label{equation:pythagoras:SA}
\Vert a-c\Vert^2-\Vert b-c\Vert^2=-\Vert b-a\Vert^2+2\langle b-a,c-a\rangle.
\end{equation}
Corresponding to the first line of \eqref{equation:error:bound:fista:eq3}, 
we multiply $\beta_{t+1}$ in \eqref{equation:pythagoras:SA} with $a:=y^{t+1}$, $b:=z^{t+1}$ and $c:=z^t$, obtaining
\begin{eqnarray}\label{equation:line1:SA}
\beta_{t+1}\Vert y^{t+1}-z^t\Vert^2-\beta_{t+1}\Vert z^{t+1}-z^t\Vert^2&=&
-\beta_{t+1}\Vert z^{t+1}-y^{t+1}\Vert^2
+2\beta_{t+1}\langle z^{t+1}-y^{t+1},z^t-y^{t+1}\rangle.
\end{eqnarray}
Corresponding to the second line of \eqref{equation:error:bound:fista:eq3} we get
\begin{eqnarray}\label{equation:line2:SA}
\Vert z^{t+1}-z^t\Vert^2-\Vert y^{t+1}-z^t\Vert^2 &=&\Vert z^{t+1}-y^{t+1}\Vert^2
+2\langle z^{t+1}-y^{t+1},y^{t+1}-z^t\rangle,
\end{eqnarray}
by multiplying \eqref{equation:line1:SA} with $-\beta_{t+1}^{-1}$. Finally, corresponding to the third line of \eqref{equation:error:bound:fista:eq3} we get
\begin{eqnarray}\label{equation:line3:SA}
\Vert y^{t+1}-x^*\Vert^2-\Vert z^{t+1}-x^*\Vert^2&=&-\Vert z^{t+1}-y^{t+1}\Vert^2
+2\langle z^{t+1}-y^{t+1},x^*-y^{t+1}\rangle,
\end{eqnarray}
using \eqref{equation:pythagoras:SA} with $a:=y^{t+1}$, $b:=z^{t+1}$ and $c:=x^*$.

Now we sum the identities \eqref{equation:line1:SA}-\eqref{equation:line3:SA} and use the result in the right hand side of \eqref{equation:error:bound:fista:eq3} obtaining
\begin{eqnarray*}
2\alpha_{t+1}\beta_{t+1}v_{k+1}-2\alpha_{t+1}(\beta_{t+1}-1)v_t&\le &
-\beta_{t+1}\Vert z^{t+1}-y^{t+1}\Vert^2+\\
&&+2\langle z^{t+1}-y^{t+1},\beta_{t+1}(z^t-y^{t+1})+x^*-z^t\rangle\\
&&+\alpha_{t+1}(L-\alpha_{t+1}^{-1})\beta_{t+1}\Vert z^{t+1}-y^{t+1}\Vert^2\\
&&+2\alpha_{t+1}\langle\epsilon^{t+1},(\beta_{t+1}-1)z^t+x^*-\beta_{t+1}z^{t+1}\rangle\\
&=&-\beta_{t+1}\Vert z^{t+1}-y^{t+1}\Vert^2+\\
&&+2\langle z^{t+1}-y^{t+1},(\beta_{t+1}-1)z^t+x^*-\beta_{t+1}y^{t+1}\rangle\\	
&&+\alpha_{t+1}(L-\alpha_{t+1}^{-1})\beta_{t+1}\Vert z^{t+1}-y^{t+1}\Vert^2\\
&&+2\alpha_{t+1}\langle\epsilon^{t+1},(\beta_{t+1}-1)z^t+x^*-\beta_{t+1}z^{t+1}\rangle.
\end{eqnarray*}

We multiply the above inequality by $\beta_{t+1}$ and use $\beta_t^2=\beta_{t+1}^2-\beta_{t+1}$ as assumed in the proposition. We then obtain
\begin{eqnarray*}
2\alpha_{t+1}\beta_{t+1}^2v_{t+1}-2\alpha_{t+1}\beta_t^2v_t &\le &
-\Vert \beta_{t+1}z^{t+1}-\beta_{t+1}y^{t+1}\Vert^2\\
&&+2\langle\beta_{t+1}z^{t+1}-\beta_{t+1}y^{t+1},(\beta_{t+1}-1)z^t+x^*-\beta_{t+1}y^{t+1}\rangle\\
&&+\alpha_{t+1}(L-\alpha_{t+1}^{-1})\beta_{t+1}^2\Vert z^{t+1}-y^{t+1}\Vert^2\\
&&+2\alpha_{t+1}\beta_{t+1}\langle\epsilon^{t+1},(\beta_{t+1}-1)z^t+x^*-\beta_{t+1}z^{t+1}\rangle.
\end{eqnarray*}

Corresponding to the first two lines in the right hand side of the previous inequality, we invoke again the Pythagorean relation \eqref{equation:pythagoras:SA} with 
$a:=\beta_{t+1}y^{t+1}$, $b:=\beta_{t+1}z^{t+1}$ and $c:=(\beta_{t+1}-1)z^t+x^*$,
obtaining
\begin{eqnarray}
2\alpha_{t+1}\beta_{t+1}^2v_{t+1}-2\alpha_{t+1}\beta_t^2v_t &\le &
\Vert\beta_{t+1}y^{t+1}-(\beta_{t+1}-1)z^t-x^*\Vert^2-\Vert \beta_{t+1}z^{t+1}-(\beta_{t+1}-1)z^t-x^*\Vert^2\nonumber\\
&&+\alpha_{t+1}(L-\alpha_{t+1}^{-1})\beta_{t+1}^2\Vert z^{t+1}-y^{t+1}\Vert^2\nonumber\\
&&-2\alpha_{t+1}\beta_{t+1}\langle\epsilon^{t+1},\beta_{t+1}z^{t+1}-(\beta_{t+1}-1)z^t-x^*\rangle.\label{equation:error:bound:fista:eq4}
\end{eqnarray}

Concerning the last line of \eqref{equation:error:bound:fista:eq4}, we will rewrite it as
\begin{eqnarray}
\langle\epsilon^{t+1},\beta_{t+1}z^{t+1}-(\beta_{t+1}-1)z^t-x^*\rangle &=&
\langle\epsilon^{t+1},\beta_{t+1}z^{t+1}-\beta_{t+1}y^{t+1}\rangle\nonumber\\
&&+\langle\epsilon^{t+1},\beta_{t+1}y^{t+1}-(\beta_{t+1}-1)z^t-x^*\rangle.\label{equation:error:bound:fista:eq5}
\end{eqnarray}

Now, by the extrapolation \eqref{equation:fist:eq2} and definition \eqref{equation:def:sk}, we have
\begin{eqnarray}
\beta_{t+1}y^{t+1}-(\beta_{t+1}-1)z^t=\beta_tz^t-(\beta_t-1)z^{t-1}=s^t.
\label{equation:error:bound:fista:eq6}
\end{eqnarray}

Using definition \eqref{equation:def:sk} for index $t+1$ and \eqref{equation:error:bound:fista:eq5}-\eqref{equation:error:bound:fista:eq6} in \eqref{equation:error:bound:fista:eq4} we obtain 
\begin{eqnarray}
2\alpha_{t+1}\beta_{t+1}^2v_{t+1}-2\alpha_{t+1}\beta_t^2v_t &\le &
\Vert s^t-x^*\Vert^2-\Vert s^{t+1}-x^*\Vert^2\nonumber\\
&&+\alpha_{t+1}\beta_{t+1}^2(L-\alpha_{t+1}^{-1})\Vert y^{t+1}-z^{t+1}\Vert^2
+2\alpha_{t+1}\beta_{t+1}^2\langle\epsilon^{t+1},y^{t+1}-z^{t+1}\rangle\nonumber
\\
&&+2\alpha_{t+1}\beta_{t+1}\langle\epsilon^{t+1},x^*-s^t\rangle.\label{equation:error:bound:fista:eq7}
\end{eqnarray}

We now bound the second line in the above inequality as
\begin{eqnarray*}
-(\alpha_{t+1}^{-1}-L)\Vert y^{t+1}-z^{t+1}\Vert^2
+2\langle\epsilon^{t+1},y^{t+1}-z^{t+1}\rangle
&\le &\frac{\Vert\epsilon^{t+1}\Vert^2}{(\alpha_{t+1}^{-1}-L)},
\end{eqnarray*}
using Young's inequality $2\langle a,b\rangle\le\frac{\Vert a\Vert^2}{\lambda}+\lambda \Vert b\Vert^2$ with $a:=\epsilon^{t+1}$, $b:=y^{t+1}-z^{t+1}$ and $\lambda:=\alpha_{t+1}^{-1}-L>0$ by the assumption on $\{\alpha_t\}$. Using the above relation in \eqref{equation:error:bound:fista:eq7} we get
\begin{eqnarray*}
2\alpha_{t+1}\beta_{t+1}^2v_{t+1}-2\alpha_{t+1}\beta_t^2v_t &\le &
\Vert s^t-x^*\Vert^2-\Vert s^{t+1}-x^*\Vert^2\nonumber\\
&&+\frac{\alpha_{t+1}\beta_{t+1}^2}{(\alpha_{t+1}^{-1}-L)}\Vert\epsilon^{t+1}\Vert^2\nonumber\\
&&+2\alpha_{t+1}\beta_{t+1}\langle\epsilon^{t+1},x^*-s^t\rangle.
\end{eqnarray*}

Finally, we use the hypothesis that $\alpha_{t+1}\le\alpha_t$ above to prove the claimed recursion.\qed
\end{proof}

\begin{corollary}\label{cor:error:bound:fista}
Under the assumptions of Proposition \ref{proposition:error:bound:fista}, for any $1\le t\le T$ and $x^*\in \mbox{\emph{S}}(f,\varphi)$,
\begin{eqnarray*}
2\alpha_{T+1}\beta_{T+1}^2\left[g(z^{T+1})-g^*\right]+\Vert s^{T+1}-x^*\Vert^2 &\le &
2\alpha_t\beta_t^2\left[g(z^t)-g^*\right]+\Vert s^t-x^*\Vert^2
+\sum_{\tau=t}^T\Delta A_{\tau+1}+\sum_{\tau=t}^T\Delta M_{\tau+1}(x^*),
\end{eqnarray*}
where, for any $\tau\in\mathbb{N}$ and $x^*\in \mbox{\emph{S}}(f,\varphi)$, we have defined
\begin{eqnarray}
\Delta A_{\tau+1}&:=&\frac{\alpha_{\tau+1}^2\beta_{\tau+1}^2}{(1-L\alpha_{\tau+1})}\Vert\epsilon^{\tau+1}\Vert^2,\label{equation:A:t}\\
\Delta M_{\tau+1}(x^*)&:=&2\alpha_{\tau+1}\beta_{\tau+1}\langle\epsilon^{\tau+1},x^*-s^{\tau}\rangle.\label{equation:M:t}
\end{eqnarray}
Moreover, $\esp[\Delta M_{\tau+1}(x^*)]=0$ for all $\tau\in\mathbb{N}$.
\end{corollary}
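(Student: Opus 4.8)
The plan is to obtain the Corollary from the recursion in Proposition \ref{proposition:error:bound:fista} by a straightforward telescoping argument, after first matching the noise terms appearing in the proposition to the defined quantities $\Delta A_{\tau+1}$ and $\Delta M_{\tau+1}(x^*)$. First I would rewrite the proposition's bound with running index $\tau$ in place of $t$, giving, for each $\tau\in\mathbb{N}$ and $x^*\in\mbox{S}(f,\varphi)$,
\begin{eqnarray*}
2\alpha_{\tau+1}\beta_{\tau+1}^2 v_{\tau+1}-2\alpha_{\tau}\beta_{\tau}^2 v_{\tau}
&\le& \Vert s^{\tau}-x^*\Vert^2-\Vert s^{\tau+1}-x^*\Vert^2
+\frac{\alpha_{\tau+1}\beta_{\tau+1}^2}{(\alpha_{\tau+1}^{-1}-L)}\Vert\epsilon^{\tau+1}\Vert^2\\
&&+2\alpha_{\tau+1}\beta_{\tau+1}\langle\epsilon^{\tau+1},x^*-s^{\tau}\rangle,
\end{eqnarray*}
where $v_\tau:=g(z^\tau)-g^*$. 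Multiplying numerator and denominator of the penultimate term by $\alpha_{\tau+1}$ gives exactly $\frac{\alpha_{\tau+1}^2\beta_{\tau+1}^2}{1-L\alpha_{\tau+1}}\Vert\epsilon^{\tau+1}\Vert^2=\Delta A_{\tau+1}$ from \eqref{equation:A:t}, and the last term is $\Delta M_{\tau+1}(x^*)$ from \eqref{equation:M:t}.

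Next I would sum this inequality over $\tau=t,\ldots,T$. The left-hand side telescopes to $2\alpha_{T+1}\beta_{T+1}^2 v_{T+1}-2\alpha_t\beta_t^2 v_t$, and the first two terms on the right telescope to $\Vert s^t-x^*\Vert^2-\Vert s^{T+1}-x^*\Vert^2$, while the noise terms accumulate as $\sum_{\tau=t}^T\Delta A_{\tau+1}+\sum_{\tau=t}^T\Delta M_{\tau+1}(x^*)$. Moving the nonpositive telescoped term $-\Vert s^{T+1}-x^*\Vert^2$ and the factor $2\alpha_t\beta_t^2 v_t$ to the appropriate sides then yields precisely the claimed inequality.

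For the martingale property, the key is the measurability bookkeeping relative to the filtration $\alg_\tau=\sigma(y^1,\xi^1,\ldots,\xi^\tau)$. I would argue that $z^\tau,z^{\tau-1}\in\alg_\tau$ by the update \eqref{equation:fista:eq1}, hence $s^\tau=\beta_\tau z^\tau-(\beta_\tau-1)z^{\tau-1}\in\alg_\tau$ and also $y^{\tau+1}\in\alg_\tau$ by the extrapolation \eqref{equation:fist:eq2}. Since $\xi^{\tau+1}$ is drawn independently of the previous samples (Assumption \ref{assump:iid:sampling}), we have $\xi^{\tau+1}\perp\!\!\perp\alg_\tau$, and unbiasedness of the oracle together with $y^{\tau+1}\in\alg_\tau$ gives $\esp[\epsilon^{\tau+1}\mid\alg_\tau]=\esp[F'(y^{\tau+1},\xi^{\tau+1})\mid\alg_\tau]-\nabla f(y^{\tau+1})=0$. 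Because $\alpha_{\tau+1},\beta_{\tau+1}$ are deterministic and $x^*-s^\tau\in\alg_\tau$, it follows that $\esp[\Delta M_{\tau+1}(x^*)\mid\alg_\tau]=2\alpha_{\tau+1}\beta_{\tau+1}\langle\esp[\epsilon^{\tau+1}\mid\alg_\tau],x^*-s^\tau\rangle=0$, and taking total expectation gives $\esp[\Delta M_{\tau+1}(x^*)]=0$.

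I expect no substantive obstacle: the telescoping is routine, and the only point requiring care is the measurability argument establishing the martingale-difference structure of $\Delta M_{\tau+1}(x^*)$, namely verifying that $s^\tau$ and $y^{\tau+1}$ are $\alg_\tau$-measurable while the fresh sample $\xi^{\tau+1}$ is independent of $\alg_\tau$, so that the conditional expectation of $\epsilon^{\tau+1}$ vanishes.
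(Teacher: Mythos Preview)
Your proposal is correct and follows essentially the same approach as the paper: telescoping the recursion of Proposition~\ref{proposition:error:bound:fista} over $\tau=t,\ldots,T$ to obtain the inequality, and then establishing $\esp[\Delta M_{\tau+1}(x^*)\mid\alg_\tau]=0$ via the measurability of $s^\tau,y^{\tau+1}\in\alg_\tau$ and the independence $\xi^{\tau+1}\perp\!\!\perp\alg_\tau$. The paper's proof is terser but identical in substance.
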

\begin{proof}
Given, $1\le t\le T$ and $x^*\in\mbox{\emph{S}}(f,\varphi)$, we simply sum recursively the inequality in Proposition \ref{proposition:error:bound:fista} from $t$ to $T$ to obtain the first claim.

We prove the second claim by showing that $\{\Delta M_{t+1}(x^*),\alg_t\}$ defines a martingale difference, i.e., $\esp[\Delta M_{t+1}(x^*)|\alg_t]=0$ for all $t\in\mathbb{N}$. To show this, note that $y^{t+1}\in\alg_t$ since $z^{t-1},z^t\in\alg_t$. Moreover $\xi^{t+1}\perp\perp\alg_t$. Since $\epsilon^{t+1}=\nabla F(y^{t+1},\xi^{t+1})-\nabla f(y^{t+1})$, the previous statements imply that $\esp[\epsilon^{t+1}|\alg_t]=0$ and
$$
\esp[\Delta M_{t+1}(x^*)|\alg_t]=2\alpha_{t+1}\beta_{t+1}\langle\esp[\epsilon^{t+1}|\alg_t],x^*-s^t\rangle=0,
$$
where we also used that $s^t\in\alg_t$ since $z^{t-1},z^t\in\alg_t$. Using $\esp[\esp[\cdot|\alg_t]]=\esp[\cdot]$, we further conclude that $\esp[\Delta M_{t+1}(x^*)]=0$ as required.\qed
\end{proof}

\subsection{$\mathsf{L}^2$-boundedness of the iterates}

In the case $X$ is unbounded and the oracle has multiplicative noise, it is not possible to infer boundedness of $\{\Lnorm{\Vert z^t\Vert}\}_{t=1}^{\infty}$ a priori (i.e., $\mathsf{L}^2$-boundedness of the iterates). In this section we obtain such $\mathsf{L}^2$-boundedness when using stochastic approximation with dynamic mini-batches. This is essential to obtain complexity estimates in the following section.

\begin{proposition}\label{prop:l2:boundedness:smooth:convex}
Suppose the assumptions of Proposition \ref{proposition:error:bound:fista} hold and
$
\sum_{t=1}^{\infty}\frac{\alpha_{t+1}^2\beta_{t+1}^2}{(1-L\alpha_{t+1})N_{t+1}}<\infty.
$
Choose $t_0\in\mathbb{N}$ and $\gamma>0$ such that 
\begin{eqnarray}
\sum_{t\ge t_0}^{\infty}\frac{\alpha_{t+1}^2\beta_{t+1}^2}{(1-L\alpha_{t+1})N_{t+1}}<\gamma<\frac{1}{15\sigma_L^2}.\label{equation:sumability}
\end{eqnarray}

Then for all $x^*\in \mbox{\emph{S}}(f,\varphi)$,
\begin{eqnarray}
\sup_{t\ge 0}\Lnorm{\Vert z^t-x^*\Vert}^2
&\le &\frac{\max_{t\in[t_0]}\left\{2\alpha_{t_0}\beta_{t_0}^2\esp[g(z^{t_0})-g^*]+\esp[\Vert s^{t_0}-x^*\Vert^2]\right\}+\frac{\sigma(x^*)^2}{3\sigma_L^2}}{1-15\gamma\sigma_L^2}.\label{equation:l2:bound:z:t}
\end{eqnarray}
\end{proposition}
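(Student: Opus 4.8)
The plan is to convert the recursion of Corollary~\ref{cor:error:bound:fista} into a self-referential estimate for the second moments of the iterates. Write $V_t:=2\alpha_t\beta_t^2\,\esp[g(z^t)-g^*]+\esp[\Vert s^t-x^*\Vert^2]$. Taking full expectation in Corollary~\ref{cor:error:bound:fista} and using $\esp[\Delta M_{\tau+1}(x^*)]=0$ annihilates the martingale terms and leaves $V_{T+1}\le V_{t_0}+\sum_{\tau=t_0}^{T}\esp[\Delta A_{\tau+1}]$ for every $T\ge t_0$. Since $g(z^t)-g^*\ge 0$ and the summands are nonnegative, this furnishes both the elementary bound $\esp[\Vert s^t-x^*\Vert^2]\le V_t$ and the running estimate $\max_{t\le T}V_t\le\max_{t\le t_0}V_t+\sum_{\tau=t_0}^{T-1}\esp[\Delta A_{\tau+1}]$.

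First I would bound $\esp[\Delta A_{\tau+1}]$. The delicate point is that Lemma~\ref{lemma:oracle:variance:decay} is stated for a \emph{deterministic} query point, whereas $y^{\tau+1}$ is random and correlated with the past; I would therefore apply it \emph{conditionally} on $\alg_\tau$, using that $y^{\tau+1}\in\alg_\tau$ (because $z^{\tau-1},z^\tau\in\alg_\tau$) while $\xi^{\tau+1}\perp\perp\alg_\tau$. This gives $\esp[\Vert\epsilon^{\tau+1}\Vert^2\mid\alg_\tau]\le N_{\tau+1}^{-1}(\sigma(x^*)+\sigma_L\Vert y^{\tau+1}-x^*\Vert)^2$, hence, with $c_{\tau+1}:=\frac{\alpha_{\tau+1}^2\beta_{\tau+1}^2}{(1-L\alpha_{\tau+1})N_{\tau+1}}$ and a Young split of the square, $\esp[\Delta A_{\tau+1}]\lesssim c_{\tau+1}\big(\sigma(x^*)^2+\sigma_L^2\,\esp[\Vert y^{\tau+1}-x^*\Vert^2]\big)$. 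The summability hypothesis \eqref{equation:sumability}, namely $\sum_{\tau\ge t_0}c_{\tau+1}<\gamma$, then converts each tail sum into a factor $\gamma$.

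Next I would control $\esp[\Vert y^{\tau+1}-x^*\Vert^2]$ and feed it back. From the extrapolation \eqref{equation:fist:eq2} one has $\Vert y^{\tau+1}-x^*\Vert\le 2\Vert z^\tau-x^*\Vert+\Vert z^{\tau-1}-x^*\Vert$, so Minkowski's inequality bounds $\esp[\Vert y^{\tau+1}-x^*\Vert^2]$ by a constant multiple of $R_T:=\max_{0\le t\le T}\esp[\Vert z^t-x^*\Vert^2]$. To close the loop I must in turn bound $R_T$ by the $V_t$'s: solving \eqref{equation:def:sk} for $z^t$ writes $z^t-x^*=\beta_t^{-1}(s^t-x^*)+(1-\beta_t^{-1})(z^{t-1}-x^*)$ as a convex combination, so convexity of $\Vert\cdot\Vert^2$ together with a short induction yields $\esp[\Vert z^t-x^*\Vert^2]\le\max\{\esp[\Vert z^0-x^*\Vert^2],\max_{s\le t}\esp[\Vert s^s-x^*\Vert^2]\}\le\max\{\esp[\Vert z^0-x^*\Vert^2],\max_{s\le t}V_s\}$, whence $R_T\le\max_{t\le T}V_t$ once the finite initial term is absorbed. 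Combining the three estimates and tracking the constants produces the self-referential inequality $R_T\le\max_{t\le t_0}V_t+5\gamma\,\sigma(x^*)^2+15\gamma\sigma_L^2\,R_T$.

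Finally, because $\gamma<\tfrac{1}{15\sigma_L^2}$ the coefficient $1-15\gamma\sigma_L^2$ is strictly positive, so I can invert the inequality; using $5\gamma\,\sigma(x^*)^2<\frac{\sigma(x^*)^2}{3\sigma_L^2}$ yields exactly \eqref{equation:l2:bound:z:t} for $R_T$, uniformly in $T$, and letting $T\to\infty$ finishes the proof. The step I expect to be the main obstacle is precisely this inversion: the estimate is genuinely circular, and in the present regime of an unbounded feasible set and possibly unbounded variance \eqref{equation:unbounded:variance} one \emph{cannot} pass from $X\le A+cX$ to $X\le A/(1-c)$ without first knowing $X<\infty$. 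This is why I would work throughout with the finite-horizon maximum $R_T$, whose finiteness follows by induction on $t$ from the finite second moment of each $\nabla F(y^t,\cdot)$, derive a bound on $R_T$ that is \emph{independent of} $T$, and only then pass to the limit; the conditioning device of the second paragraph is the other point demanding care, since it is what legitimately transfers Lemma~\ref{lemma:oracle:variance:decay} from a fixed point to the correlated random iterate $y^{\tau+1}$.
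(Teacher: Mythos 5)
Your proof is correct, and up to the decisive step it follows the same skeleton as the paper's: the telescoped recursion of Corollary \ref{cor:error:bound:fista}, the transfer of Lemma \ref{lemma:oracle:variance:decay} to the random point $y^{\tau+1}$ by conditioning on the past (your conditioning on $\alg_\tau$ with $y^{\tau+1}\in\alg_\tau$ and $\xi^{\tau+1}\perp\perp\alg_\tau$ is in fact the correct form of what the paper writes), and the extrapolation identity \eqref{equation:expression:yt} with $0\le(\beta_\tau-1)/\beta_{\tau+1}\le1$ combined with \eqref{equation:sumability}; your constants $5\gamma\sigma(x^*)^2$ and $15\gamma\sigma_L^2 R_T$ are compatible with \eqref{equation:l2:bound:z:t}, since $\gamma<1/(15\sigma_L^2)$ gives $5\gamma\sigma(x^*)^2<\sigma(x^*)^2/(3\sigma_L^2)$. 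Where you genuinely differ is in how the circular estimate is resolved. Writing $d_t:=\Lnorm{\Vert z^t-x^*\Vert}$, the paper introduces the first-crossing index $\tau_a:=\inf\{t\ge t_0:d_t>a\}$ and the lower bound $\Lnorm{\Vert s^{\tau_a}-x^*\Vert}\ge\beta_{\tau_a}d_{\tau_a}-(\beta_{\tau_a}-1)d_{\tau_a-1}>a$ of \eqref{equation:lower:bound:s:tau}, concluding that every threshold $a^2$ the sequence ever exceeds obeys \eqref{equation:sequence:treshold}; that device never requires knowing any supremum is finite. You instead read \eqref{equation:def:sk} backwards as the convex combination $z^t-x^*=\beta_t^{-1}(s^t-x^*)+(1-\beta_t^{-1})(z^{t-1}-x^*)$ --- the mirror image of the paper's Minkowski bound, used in the forward direction --- to get $d_t^2\le\max\{d_0^2,\max_{s\le t}\esp[\Vert s^s-x^*\Vert^2]\}$, and then rearrange a self-referential inequality for the finite-horizon maximum $R_T$, a step that is legitimate only because you first establish $R_T<\infty$ by induction before letting $T\to\infty$. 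Each route buys something: the paper's threshold argument sidesteps all finiteness questions, while yours is more elementary and transparent, and, as a bonus, your convex-combination bound is precisely the justification missing in the paper for its final, unexplained passage from $\sup_{t\ge t_0}$ to $\sup_{t\ge 0}$ with $\max_{t\in[t_0]}$ in the numerator; note, though, that both your argument and the paper's cover the index $t=0$ only by keeping $\esp[\Vert z^0-x^*\Vert^2]$ inside a maximum, a boundary case that the bound \eqref{equation:l2:bound:z:t} as stated glosses over.
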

\begin{proof}
For clarity of exposition we use the notation $v^t:=\esp[g(z^t)-g^*]$. Given $t\ge t_0$, we take total expectation in the inequality of Corollary \ref{cor:error:bound:fista} and get, using $v^{t+1}\ge0$, 
\begin{eqnarray}
\esp[\Vert s^{t+1}-x^*\Vert^2]&\le & 2\alpha_{t+1}\beta_{k+1}^2v^{t+1}+\esp[\Vert s^{t+1}-x^*\Vert^2]\nonumber\\
&\le &2\alpha_{t_0}\beta_{t_0}^2v^{t_0}+\esp[\Vert s^{t_0}-x^*\Vert^2]+\sum_{i=t_0}^{t}\frac{\alpha_{i+1}\beta_{i+1}^2}{(\alpha_{i+1}^{-1}-L)}\esp[\Vert\epsilon^{i+1}\Vert^2].\label{equation:recursion:aux:fista}
\end{eqnarray}

Let $i\in\mathbb{N}$. From definition \eqref{equation:fist:eq2} we have
\begin{eqnarray}
y^{i+1}-x^*&=&\left(\frac{\beta_i-1}{\beta_{i+1}}+1\right)(z^i-x^*)-\left(\frac{\beta_i-1}{\beta_{i+1}}\right)(z^{i-1}-x^*).\label{equation:expression:yt}
\end{eqnarray}
We now use the above expression in Lemma \ref{lemma:oracle:variance:decay}. Precisely, we use Assumptions \ref{assump:iid:sampling}-\ref{assump:oracle:multiplicative:noise}, $\epsilon^{i+1}=F'(y^t,\xi^t)-\nabla f(y^t)$, definition of $F'(y^t,\xi^t)$ in \textsf{Algorithm \ref{algorithm:smooth:convex}}, $\xi^{i+1}\perp\perp\alg_{i+1}$ and $y^{i+1}\in\alg_{i+1}$. Then we get
\begin{eqnarray*}
\sqrt{N_{i+1}}\cdot\Lnorm{\Vert\epsilon^{i+1}\Vert|\alg_{i+1}}&\le &\sigma(x^*)+\sigma_L\Vert y^{i+1}-x^*\Vert\\
&\le &\sigma(x^*)+\sigma_L\left(\frac{\beta_i-1}{\beta_{i+1}}+1\right)\Vert z^i-x^*\Vert+\sigma_L\left(\frac{\beta_i-1}{\beta_{i+1}}\right)\Vert z^{i-1}-x^*\Vert.
\end{eqnarray*}
From the above, we use $\Lnorm{\Lnorm{\cdot|\alg_{i+1}}}=\Lnorm{\cdot}$ and take squares to get
\begin{eqnarray}
N_{i+1}\cdot\Lnorm{\Vert\epsilon^{i+1}\Vert}^2&\le &3\sigma(x^*)^2+3\sigma_L^2\left(\frac{\beta_i-1}{\beta_{i+1}}+1\right)^2\Lnorm{\Vert z^i-x^*\Vert}^2+3\sigma_L^2\left(\frac{\beta_i-1}{\beta_{i+1}}\right)^2\Lnorm{\Vert z^{i-1}-x^*\Vert}^2,\label{equation:bound:stochastic:error:fista}
\end{eqnarray}
where we used the relation $(\sum_{i=1}^3a_i)^2\le3\sum_{i=1}^3a_i^2$.

For simplicity, we define
$
q_i:=\frac{\beta_i-1}{\beta_{i+1}}
$
and 
$
d_i:=\Lnorm{\Vert z^i-x^*\Vert}.
$
 From \eqref{equation:recursion:aux:fista}, \eqref{equation:bound:stochastic:error:fista} and $0\le q_i\le1$ for all $i\in\mathbb{N}$, we finally get the recursion for any $t\ge t_0$,
\begin{eqnarray}
\esp[\Vert s^{t+1}-x^*\Vert^2]&\le &2\alpha_{t_0}\beta_{t_0}^2v^{t_0}+\esp[\Vert s^{t_0}-x^*\Vert^2]+\sum_{i=t_0}^{t}\frac{\alpha_{i+1}\beta_{i+1}^2}{(\alpha_{i+1}^{-1}-L)}\cdot\frac{3\sigma(x^*)^2}{N_{i+1}},\nonumber\\
&&+\sum_{i=t_0}^{t}\frac{\alpha_{i+1}\beta_{i+1}^2}{(\alpha_{i+1}^{-1}-L)}\cdot\frac{12\sigma_L^2}{N_{i+1}}d_i^2
+\sum_{i=t_0}^{t}\frac{\alpha_{i+1}\beta_{i+1}^2}{(\alpha_{i+1}^{-1}-L)}\cdot\frac{3\sigma_L^2}{N_{i+1}}d_{i-1}^2.\label{equation:recursion:aux2:fista}
\end{eqnarray}

For any $a>0$, we define the stopping time
\begin{eqnarray}
\tau_a:=\inf\left\{t\ge t_0:d_t>a\right\},\label{equation:def:tau:fista}
\end{eqnarray}
where $t_0$ and $\gamma$ are as defined in the statement of the proposition. Note that for any $t\in\mathbb{N}$,
\begin{eqnarray*}
s^{t+1}-x^*&=&\beta_{t+1}(z^{t+1}-x^*)-(\beta_{t+1}-1)(z^t-x^*).
\end{eqnarray*}
From the above equality, $\beta_{t+1}\ge1$, the triangle inequality for $\Vert\cdot\Vert$ and Minkowski's inequality for $\Lnorm{\cdot}$, we get
\begin{eqnarray*}
\Lnorm{\Vert s^{t+1}-x^*\Vert}\ge\beta_{t+1}\Lnorm{\Vert z^{t+1}-x^*\Vert}-(\beta_{t+1}-1)\Lnorm{\Vert z^t-x^*\Vert}.
\end{eqnarray*}
The above relation implies that for any $a>0$ such that $\tau_a<\infty$,
\begin{eqnarray}
\Lnorm{\Vert s^{\tau_a}-x^*\Vert}\ge\beta_{\tau_a}d_{\tau_a}-(\beta_{\tau_a}-1)d_{\tau_a-1}
>\beta_{\tau_a}a-(\beta_{\tau_a}-1)a=a,\label{equation:lower:bound:s:tau}
\end{eqnarray}
since $d_{\tau_a}>a$ and $d_{\tau_a-1}\le a$ by definition \eqref{equation:def:tau:fista}.\footnote{We note here the importance of assuming $\beta_t\ge1$ and the specific form of the extrapolation in \eqref{equation:fist:eq2} in terms of previous iterates.}
	
From \eqref{equation:sumability} and \eqref{equation:recursion:aux2:fista}-\eqref{equation:lower:bound:s:tau}, we have that for any $a>0$ such that $\tau_a<\infty$,
\begin{eqnarray*}
a^2<\esp[\Vert s^{\tau_a}-x^*\Vert^2]&\le &2\alpha_{t_0}\beta_{t_0}^2v^{t_0}+\esp[\Vert s^{t_0}-x^*\Vert^2]+\sum_{i=t_0}^{\tau_a-1}\frac{\alpha_{i+1}\beta_{i+1}^2}{(\alpha_{i+1}^{-1}-L)}\cdot\frac{3\sigma(x^*)^2}{N_{i+1}},\nonumber\\
&&+\sum_{i=t_0}^{\tau_a-1}\frac{\alpha_{i+1}\beta_{i+1}^2}{(\alpha_{i+1}^{-1}-L)}\cdot\frac{12\sigma_L^2}{N_{i+1}}d_i^2
+\sum_{i=t_0}^{\tau_a-1}\frac{\alpha_{i+1}\beta_{i+1}^2}{(\alpha_{i+1}^{-1}-L)}\cdot\frac{3\sigma_L^2}{N_{i+1}}d_{i-1}^2\\
&\le &2\alpha_{t_0}\beta_{t_0}^2v^{t_0}+\esp[\Vert s^{t_0}-x^*\Vert^2]+
3\gamma\left[\sigma(x^*)^2+5\sigma_L^2a^2\right],
\end{eqnarray*}
and hence,
\begin{eqnarray}\label{equation:sequence:treshold}
a^2<\frac{2\alpha_{t_0} \beta_{t_0}^2v^{t_0}+\esp[\Vert s^{t_0}-x^*\Vert^2]+3\gamma\sigma(x^*)^2}{1-15\gamma\sigma_L^2},
\end{eqnarray}
where we used that
$
0<\gamma<\frac{1}{15\sigma_L^2}.
$
By definition of $\tau_a$ for any $a>0$, the argument above shows that any threshold $a^2$ which the sequence $\{d_t^2\}_{t\ge t_0}$ eventually exceeds is bounded above by \eqref{equation:sequence:treshold}. Hence $\{d_t^2\}_{t\ge t_0}$ is bounded and
$$
\sup_{t\ge t_0}d_t^2\le \frac{2\alpha_{t_0}\beta_{t_0}^2v^{t_0}+\esp[\Vert s^{t_0}-x^*\Vert^2]+3\gamma\sigma(x^*)^2}{1-15\gamma\sigma_L^2}.
$$
Since the denominator above is less then $1$, the bound above implies further that
\begin{eqnarray*}
\sup_{t\ge 0}d_t^2&\le &\frac{\max_{t\in[t_0]}\left\{2\alpha_{t_0}\beta_{t_0}^2v^{t_0}+\esp[\Vert s^{t_0}-x^*\Vert^2]\right\}+3\gamma\sigma(x^*)^2}{1-15\gamma\sigma_L^2}\\
&\le &\frac{\max_{t\in[t_0]}\left\{2\alpha_{t_0}\beta_{t_0}^2v^{t_0}+\esp[\Vert s^{t_0}-x^*\Vert^2]\right\}+\frac{\sigma(x^*)^2}{5\sigma_L^2}}{1-15\gamma\sigma_L^2},
\end{eqnarray*}
where we used 
$
0<\gamma<\frac{1}{15\sigma_L^2}.
$
This concludes the proof of the claim.\qed
\end{proof}

\subsection{Convergence rate and oracle complexity}

We now derive a convergence rate and estimate the oracle complexity of \textsf{Algorithm \ref{algorithm:smooth:convex}}.

\begin{theorem}\label{thm:rate:smooth:convex:multiplicative:noise}
Suppose the Assumptions of Proposition \ref{proposition:error:bound:fista} hold and the sequence $\{y^t,z^t\}$ is generated by \textsf{\emph{Algorithm \ref{algorithm:smooth:convex}}}. Let $\mu\in(0,1)$, $a,b,\delta>0$, $N_0\in\mathbb{N}$ and set for all $t\in\mathbb{N}$,
\begin{eqnarray*}
\beta_t:=\frac{1+t}{2},\quad \alpha_t:=\frac{\mu}{L+\frac{a}{\sqrt{N_0}}},
\quad N_t:=N_0\Big\lfloor(t+2+\delta)^3\left[\ln(t+2+\delta)\right]^{1+2b}\Big\rfloor.
\end{eqnarray*}
Choose $\phi\in(0,1)$ and let $t_0:=t_0(\alpha_1\sigma_L,N_0,b,\delta)\in\mathbb{N}$ be given by
\begin{eqnarray}\label{equation:thm:rate:smooth:convex:multiplicative:noise:t0}
t_0:=\left\lceil\exp\left\{\sqrt[2b]{\frac{15(\alpha_1\sigma_L)^2}{8\phi N_0 b}}\right\}-1-\delta\right\rceil\bigvee1.
\end{eqnarray}

Then Proposition \ref{prop:l2:boundedness:smooth:convex} holds. Moreover, given $x^*\in\mbox{\emph{S}}(f,\varphi)$ and $\mathsf{J}:=\mathsf{J}(x^*,t_0)>0$ such that
\begin{equation*}
\sup_{\tau\in\mathbb{N}}\Lnorm{\Vert z^\tau-x^*\Vert}^2\le\mathsf{J},
\end{equation*}
the following bound holds for all $t\in\mathbb{N}$,
\begin{eqnarray*}
\esp\left[g(z^{t+1})-g^*\right]&\le &
\frac{4\esp\left[g(z^1)-g^*\right]}{(t+2)^2}+2\left[\frac{L}{\mu}+\frac{a}{\mu\sqrt{N_0}}\right]\frac{\esp\left[\Vert s^1-x^*\Vert^2\right]}{(t+2)^2}\\
&&+\left[\frac{L}{\mu}+\frac{a}{\mu\sqrt{N_0}}\right]\cdot\frac{3\mu^2}{4(1-\mu)a^2b\left[\ln(2+\delta)\right]^{2b}}\cdot\frac{\sigma(x^*)^2}{(t+2)^2}\\
&&+\left[\frac{L}{\mu}+\frac{a}{\mu\sqrt{N_0}}\right]\cdot\frac{15\mu^2}{4(1-\mu)N_0b\left[\ln(2+\delta)\right]^{2b}}\cdot\frac{(\sigma_L/L)^2\mathsf{J}}{(t+2)^2}.
\end{eqnarray*}
\end{theorem}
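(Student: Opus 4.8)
The plan is to turn the telescoped estimate of Corollary~\ref{cor:error:bound:fista} into a rate by taking expectations (which annihilates the martingale part) and then controlling the accumulated variance through the $\mathsf{L}^2$-bound $\mathsf{J}$ supplied by Proposition~\ref{prop:l2:boundedness:smooth:convex}. Two elementary consequences of the parameter choice drive all the bookkeeping. Since $\alpha_t\equiv\alpha:=\mu/(L+a/\sqrt{N_0})$ is constant, the identity $\alpha(L+a/\sqrt{N_0})=\mu$ splits into the two inequalities $\alpha L\le\mu$ and $\alpha a/\sqrt{N_0}\le\mu$; in particular $1/\alpha=L/\mu+a/(\mu\sqrt{N_0})$ and $1-L\alpha\ge1-\mu$. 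Also $\beta_t=(1+t)/2$ gives $\beta_1=1$ and $\beta_{t+1}^2=(t+2)^2/4$. My first task is to check that the stated $t_0$ validates the summability hypothesis \eqref{equation:sumability} of Proposition~\ref{prop:l2:boundedness:smooth:convex}: bounding $N_{t+1}$ below by $N_0(t+3+\delta)^3[\ln(t+3+\delta)]^{1+2b}$ and using $(t+2)^2(t+2+\delta)\le(t+3+\delta)^3$ together with $\ln(t+3+\delta)\ge\ln(t+2+\delta)$ reduces the tail $\sum_{t\ge t_0}\alpha^2\beta_{t+1}^2/[(1-L\alpha)N_{t+1}]$ to a multiple of $\sum_{t\ge t_0}1/\big((t+2+\delta)[\ln(t+2+\delta)]^{1+2b}\big)$, which by the integral test is of order $[\ln(t_0+1+\delta)]^{-2b}$. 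The closed-form $t_0$ in \eqref{equation:thm:rate:smooth:convex:multiplicative:noise:t0}, with its slack factor $\phi\in(0,1)$, is exactly the choice making $15\sigma_L^2$ times this tail at most $\phi<1$, so some $\gamma<1/(15\sigma_L^2)$ is admissible and Proposition~\ref{prop:l2:boundedness:smooth:convex} applies, furnishing a finite $\mathsf{J}$ with $\sup_\tau\Lnorm{\Vert z^\tau-x^*\Vert}^2\le\mathsf{J}$.

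For the rate, I would specialize Corollary~\ref{cor:error:bound:fista} to $t=1$, take total expectation, and use $\esp[\Delta M_{\tau+1}(x^*)]=0$ and $\esp[\Vert s^{T+1}-x^*\Vert^2]\ge0$ to obtain
\begin{equation*}
2\alpha\beta_{T+1}^2\,\esp[g(z^{T+1})-g^*]\le 2\alpha\,\esp[g(z^1)-g^*]+\esp[\Vert s^1-x^*\Vert^2]+\sum_{\tau=1}^{T}\esp[\Delta A_{\tau+1}].
\end{equation*}
Dividing by $2\alpha\beta_{T+1}^2=\alpha(T+2)^2/2$ produces the first two terms of the claim verbatim, with the factor $1/\alpha=L/\mu+a/(\mu\sqrt{N_0})$ surfacing on the $\esp[\Vert s^1-x^*\Vert^2]$ term and cancelling on the $\esp[g(z^1)-g^*]$ term.

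It remains to estimate the variance sum. Reusing the conditional decay \eqref{equation:bound:stochastic:error:fista} from the proof of Proposition~\ref{prop:l2:boundedness:smooth:convex}, together with the bounds $0\le q_\tau\le1$ and $d_\tau^2,d_{\tau-1}^2\le\mathsf{J}$ (in the notation $q_i=(\beta_i-1)/\beta_{i+1}$, $d_i=\Lnorm{\Vert z^i-x^*\Vert}$ of that proof), gives $\esp[\Vert\epsilon^{\tau+1}\Vert^2]\le(3\sigma(x^*)^2+15\sigma_L^2\mathsf{J})/N_{\tau+1}$; with $1-L\alpha\ge1-\mu$ and the definition of $\Delta A_{\tau+1}$ this yields
\begin{equation*}
\sum_{\tau=1}^{T}\esp[\Delta A_{\tau+1}]\le\frac{\alpha^2(3\sigma(x^*)^2+15\sigma_L^2\mathsf{J})}{1-\mu}\sum_{\tau=1}^{\infty}\frac{\beta_{\tau+1}^2}{N_{\tau+1}},\qquad \sum_{\tau=1}^{\infty}\frac{\beta_{\tau+1}^2}{N_{\tau+1}}\le\frac{1}{8N_0 b[\ln(2+\delta)]^{2b}},
\end{equation*}
the second inequality again by the integral test after $(\tau+2)^2/(\tau+3+\delta)^3\le1/(\tau+2+\delta)$. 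Dividing by $2\alpha\beta_{T+1}^2$ gives both remaining terms a common coefficient proportional to $\alpha/N_0$. I then split $3\sigma(x^*)^2+15\sigma_L^2\mathsf{J}$ and reshape the coefficient via the two stepsize inequalities: for the $\sigma(x^*)^2$ part, $\alpha a/\sqrt{N_0}\le\mu$ gives $\alpha/N_0\le\mu^2/(\alpha a^2)$, producing the $1/a^2$ factor; for the $\sigma_L^2\mathsf{J}$ part, $\alpha L\le\mu$ gives $\alpha/N_0\le\mu^2/(\alpha L^2 N_0)$, producing the $(\sigma_L/L)^2$ factor and retaining $1/N_0$. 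Collecting the four contributions is precisely the asserted bound.

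The main obstacle is not any single inequality but the two places where calibration is delicate. The first is the integral estimate feeding the definition of $t_0$: one must confirm that the explicit exponential formula genuinely forces the tail below the threshold $1/(15\sigma_L^2)$ demanded by Proposition~\ref{prop:l2:boundedness:smooth:convex}, since this --- rather than any uniform variance bound --- is what makes $\mathsf{J}$ finite under the possibly unbounded variance \eqref{equation:unbounded:variance}. The second is routing the single clean coefficient $\alpha/N_0$ into the two differently shaped targets, $\mu^2/a^2$ versus $\mu^2(\sigma_L/L)^2/N_0$, by selecting the correct half of the stepsize identity for each variance component; the wrong half still yields a valid bound but not the stated form. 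By contrast, the convexity and prox telescoping and the martingale cancellation are routine once Corollary~\ref{cor:error:bound:fista} is available.
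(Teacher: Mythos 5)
Your proposal is correct and follows essentially the same route as the paper's proof: verify the parameter conditions and calibrate $t_0$ via the integral test so Proposition \ref{prop:l2:boundedness:smooth:convex} yields $\mathsf{J}$, telescope Corollary \ref{cor:error:bound:fista} from $t=1$ with the martingale terms killed by expectation, bound $\sum\esp[\Delta A_{\tau+1}]$ through \eqref{equation:bound:stochastic:error:fista} and the same integral estimate, and split the stepsize identity ($\alpha a/\sqrt{N_0}\le\mu$ for the $\sigma(x^*)^2$ term, $\alpha L\le\mu$ for the $\sigma_L^2\mathsf{J}$ term) exactly as the paper does via its bounds $[\alpha_1\sigma(x^*)]^2\le\mu^2N_0\sigma(x^*)^2/a^2$ and $(\alpha_1\sigma_L)^2\le\mu^2\sigma_L^2/L^2$. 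The only blemish is the remark that $N_{t+1}$ is ``bounded below'' by $N_0(t+3+\delta)^3[\ln(t+3+\delta)]^{1+2b}$ --- the floor makes this an upper bound --- but the inequality you actually use, $(t+2)^2/N_{t+1}\le 1/\bigl(N_0(t+2+\delta)[\ln(t+2+\delta)]^{1+2b}\bigr)$, is the same one the paper invokes with the same silent absorption of the floor.
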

\begin{proof}
We first show that $\{\beta_t\}$, $\{\alpha_t\}$ and $\{N_t\}$ satisfy the conditions of Propositions \ref{proposition:error:bound:fista}-\ref{prop:l2:boundedness:smooth:convex}. We have that $\{\alpha_t\}$ is a constant (an hence non-increasing) sequence satisfying $0<L\alpha_t\le\mu$. By inspection, it is easy to check that $\beta_t\ge1$ and $\beta_t^2=\beta_{t+1}^2-\beta_t^2$ for all $t\ge1$. Also,
\begin{eqnarray*}
\sum_{t=1}^{\infty}\frac{\alpha_{t+1}\beta_{t+1}^2}{(\alpha_{t+1}^{-1}-L)N_{t+1}}
&\le &\frac{\alpha_1^2}{4(1-L\alpha_1)N_0}\sum_{t=1}^{\infty}\frac{1}{(t+2+\delta)[\ln(t+2+\delta)]^{1+2b}}<\infty.
\end{eqnarray*}
Hence, we have have shown that the policies $\{\beta_t\}$, $\{\alpha_t\}$ and $\{N_t\}$ satisfy the conditions of Propositions \ref{proposition:error:bound:fista}-\ref{prop:l2:boundedness:smooth:convex}.

For $\phi\in(0,1)$, we want $t_0$ to satisfy 
\begin{eqnarray}
\sum_{t\ge t_0}^{\infty}\frac{\alpha_{t+1}\beta_{t+1}^2}{(\alpha_{t+1}^{-1}-L)N_{t+1}}\le\frac{\phi}{15\sigma_L^2},\label{thm:rate:smooth:convex:multiplicative:noise:eq1}
\end{eqnarray}
as prescribed in Proposition \ref{prop:l2:boundedness:smooth:convex}. We have 
\begin{eqnarray}
\sum_{t\ge t_0}^{\infty}\frac{\alpha_{t+1}\beta_{t+1}^2}{(\alpha_{t+1}^{-1}-L)N_{t+1}}
&\le &\frac{\alpha_1^2}{4(1-L\alpha_1)N_0}\sum_{t\ge t_0}^{\infty}\frac{1}{(t+2+\delta)[\ln(t+2+\delta)]^{1+2b}}\nonumber\\
&\le &\frac{\alpha_1^2}{4(1-L\alpha_1)N_0}\int_{t_0-1}^{\infty}\frac{\dist t}{(t+2+\delta)[\ln(t+2+\delta)]^{1+2b}}\nonumber\\
&=&\frac{\alpha_1^2}{4(1-L\alpha_1)N_0}\cdot\frac{1}{2b\ln(t_0+1+\delta)^{2b}}\nonumber\\
&\le &\frac{\alpha_1^2}{8(1-\mu)N_0b\ln(t_0+1+\delta)^{2b}}.\label{thm:rate:smooth:convex:multiplicative:noise:eq2}
\end{eqnarray}
From the above relation, it is sufficient to choose $t_0$ as the minimum natural number such that the right hand side of \eqref{thm:rate:smooth:convex:multiplicative:noise:eq2} is less than $\phi/15\sigma_L^2$. This is satisfied by $t_0$ in \eqref{equation:thm:rate:smooth:convex:multiplicative:noise:t0}.

Let $x^*\in\mbox{S}(f,\varphi)$. From Proposition \ref{prop:l2:boundedness:smooth:convex} and \eqref{thm:rate:smooth:convex:multiplicative:noise:eq1} we know that $\{\Lnorm{\Vert z^\tau-x^*\Vert}\}_{\tau\ge1}$ is bounded, say
$
\sup_{\tau\in\mathbb{N}}\Lnorm{\Vert z^\tau-x^*\Vert}^2\le\mathsf{J}. 
$
From this, \eqref{equation:bound:stochastic:error:fista} and $\sup_{\tau\in\mathbb{N}}\frac{\beta_\tau-1}{\beta_{\tau+1}}\le1$ we get
\begin{eqnarray}
\esp\left[\Vert\epsilon^{\tau+1}\Vert^2\right]\le\frac{3\sigma(x^*)^2+15\sigma_L^2\mathsf{J}}{N_{\tau+1}}. 
\label{thm:rate:smooth:convex:multiplicative:noise:eq3}
\end{eqnarray}
We now bound the expectation of the sum $\sum_{t}\Delta A_t$ in the inequality of Corollary \ref{cor:error:bound:fista}. Precisely, for any $t\ge1$,
\begin{eqnarray*}
\sum_{\tau=1}^t\esp\left[\Delta A_\tau\right]&=&\sum_{\tau=1}^t\frac{\alpha_{\tau+1}^2\beta_{\tau+1}^2\esp[\Vert\epsilon^{\tau+1}\Vert^2]}{(1-L\alpha_{\tau+1})}\nonumber\\
&\le &\frac{\alpha_1^2}{4(1-\mu)}\sum_{\tau=1}^t(\tau+2)^2\esp\left[\Vert\epsilon^{\tau+1}\Vert^2\right]\nonumber\\
&\le &\frac{\alpha_1^2\left[3\sigma(x^*)^2+15\sigma_L^2\mathsf{J}\right]}{4(1-\mu)N_0}\sum_{\tau=1}^t\frac{(\tau+2)^2}{(\tau+2+\delta)^3\left[\ln(\tau+2+\delta)\right]^{1+2b}}\nonumber\\
&\le &\frac{\alpha_1^2\left[3\sigma(x^*)^2+15\sigma_L^2\mathsf{J}\right]}{4(1-\mu)N_0}\sum_{\tau=1}^t\frac{1}{(\tau+2+\delta)\left[\ln(\tau+2+\delta)\right]^{1+2b}}\nonumber\\
&\le &\frac{\alpha_1^2\left[3\sigma(x^*)^2+15\sigma_L^2\mathsf{J}\right]}{4(1-\mu)N_0}\int_{\tau=0}^t\frac{\dist \tau}{(\tau+2+\delta)\left[\ln(\tau+2+\delta)\right]^{1+2b}}\nonumber\\
&\le &\frac{\alpha_1^2\left[3\sigma(x^*)^2+15\sigma_L^2\mathsf{J}\right]}{8(1-\mu)N_0b\left[\ln(2+\delta)\right]^{2b}}=
\frac{3[\alpha_1\sigma(x^*)]^2+15(\alpha_1\sigma_L)^2\mathsf{J}}{8(1-\mu)N_0b\left[\ln(2+\delta)\right]^{2b}},
\end{eqnarray*}
where we used \eqref{thm:rate:smooth:convex:multiplicative:noise:eq3} and definition of $N_\tau$ in third inequality. 

From the above inequality and the bounds
\begin{eqnarray*}
[\alpha_1\sigma(x^*)]^2\le\left[\frac{\mu\sqrt{N_0}}{a}\sigma(x^*)\right]^2=\frac{\mu^2N_0\sigma(x^*)^2}{a^2},\quad\quad\quad
(\alpha_1\sigma_L)^2\le\left(\frac{\mu}{L}\sigma_L\right)^2=\frac{\mu^2\sigma_L^2}{L^2},
\end{eqnarray*}
we finally get
\begin{eqnarray}
\sum_{\tau=1}^t\esp\left[\Delta A_\tau\right]&\le &
\frac{3\mu^2\sigma(x^*)^2}{8(1-\mu)a^2b\left[\ln(2+\delta)\right]^{2b}}
+\frac{15\mu^2(\sigma_L/L)^2\mathsf{J}}{8(1-\mu)N_0b\left[\ln(2+\delta)\right]^{2b}}.
\label{thm:rate:smooth:convex:multiplicative:noise:eq4}
\end{eqnarray}

We also have for any $t\ge1$,
\begin{eqnarray}
\alpha_t^{-1}=\frac{L}{\mu}+\frac{a}{\mu\sqrt{N_0}},\quad\frac{1}{\beta_t^2}=\frac{4}{(t+1)^2}.
\label{thm:rate:smooth:convex:multiplicative:noise:eq5}
\end{eqnarray}

Finally, we take total expectation in the inequality of Corollary \ref{cor:error:bound:fista} for $t:=1$ and $T:=t$ and use $\esp[\Delta M_t(x^*)]=0$ for all $t\in\mathbb{N}$, $\beta_1=1$ and \eqref{thm:rate:smooth:convex:multiplicative:noise:eq4}-\eqref{thm:rate:smooth:convex:multiplicative:noise:eq5} to derive the required claim.\qed
\end{proof}

\begin{remark}
Regarding the constant $\mathsf{J}$ in Theorem \ref{thm:rate:smooth:convex:multiplicative:noise}, from Proposition \ref{prop:l2:boundedness:smooth:convex} we have the upper bound
\begin{eqnarray}
\mathsf{J}&\le &\frac{\max_{t\in[t_0]}\left\{\frac{1}{2}\alpha_1(t_0+1)^2\esp[g(z^{t_0})-g^*]+\esp[\Vert s^{t_0}-x^*\Vert^2]\right\}+\frac{\sigma(x^*)^2}{5\sigma_L^2}}{1-\phi}.\label{equation:thm:rate:smooth:convex:multiplicative:noise:J}
\end{eqnarray}
\end{remark}

\begin{corollary}\label{cor:rate:complexity:smooth:convex:multiplicative:noise}
Let the assumptions of Theorem \ref{thm:rate:smooth:convex:multiplicative:noise} hold. Given $\epsilon>0$, \textsf{\emph{Algorithm \ref{algorithm:smooth:convex}}} achieves the tolerance
$
\esp\left[g(z^{T})-g^*\right]\le\epsilon
$
after $T=\mathcal{O}(\epsilon^{-\frac{1}{2}})$ iterations using an oracle complexity of
$$
\sum_{\tau=1}^TN_{\tau}\le\mathcal{O}\left(\epsilon^{-2}\right)\left[\ln\left(\epsilon^{-\frac{1}{2}}\right)\right]^2.
$$
\end{corollary}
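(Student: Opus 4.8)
The plan is to turn the $\mathcal{O}((t+2)^{-2})$ rate of Theorem \ref{thm:rate:smooth:convex:multiplicative:noise} into the iteration count and then to sum the sampling policy $\{N_t\}$ over the resulting horizon. First I would collapse the four terms on the right-hand side of the bound in Theorem \ref{thm:rate:smooth:convex:multiplicative:noise} into a single constant. Each term is proportional to $(t+2)^{-2}$, and every remaining factor---namely $\esp[g(z^1)-g^*]$, $\esp[\Vert s^1-x^*\Vert^2]$, $\sigma(x^*)^2$, $(\sigma_L/L)^2\mathsf{J}$, and the fixed policy constants $\mu,a,b,\delta,N_0,L$---is independent of $t$. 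The finiteness of $\mathsf{J}$ is guaranteed by Proposition \ref{prop:l2:boundedness:smooth:convex}, whose summability hypothesis \eqref{equation:sumability} is enforced by the choice of $t_0$ in \eqref{equation:thm:rate:smooth:convex:multiplicative:noise:t0} together with $\phi\in(0,1)$. Hence there is a constant $C=C(x^*,t_0)$, independent of $t$, with $\esp[g(z^{t+1})-g^*]\le C(t+2)^{-2}$.

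For the iteration complexity I would simply solve $C(t+2)^{-2}\le\epsilon$, which gives $t+2\ge\sqrt{C}\,\epsilon^{-\frac{1}{2}}$; taking $T:=\lceil\sqrt{C}\,\epsilon^{-\frac{1}{2}}\rceil$ therefore yields $\esp[g(z^T)-g^*]\le\epsilon$ with $T=\mathcal{O}(\epsilon^{-\frac{1}{2}})$, the claimed optimal iteration count.

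For the oracle complexity I would bound $\sum_{\tau=1}^T N_\tau\le N_0\sum_{\tau=1}^T(\tau+2+\delta)^3[\ln(\tau+2+\delta)]^{1+2b}$ by the integral $N_0\int_0^T(t+2+\delta)^3[\ln(t+2+\delta)]^{1+2b}\,\dist t$, since the integrand is eventually increasing. Integration by parts shows this integral equals $\tfrac{1}{4}(T+2+\delta)^4[\ln(T+2+\delta)]^{1+2b}$ to leading order, because the by-parts remainder carries the same power of $t$ but a strictly lower power of the logarithm and is thus absorbed. Hence $\sum_{\tau=1}^T N_\tau=\mathcal{O}(T^4[\ln T]^{1+2b})$, and substituting $T=\mathcal{O}(\epsilon^{-\frac{1}{2}})$ gives $T^4=\mathcal{O}(\epsilon^{-2})$ and $\ln T=\mathcal{O}(\ln(\epsilon^{-\frac{1}{2}}))$, so that $\sum_{\tau=1}^T N_\tau=\mathcal{O}(\epsilon^{-2})[\ln(\epsilon^{-\frac{1}{2}})]^{1+2b}$. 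For any admissible $b\in(0,\tfrac{1}{2}]$ one has $[\ln(\epsilon^{-\frac{1}{2}})]^{1+2b}\le[\ln(\epsilon^{-\frac{1}{2}})]^2$ whenever $\ln(\epsilon^{-\frac{1}{2}})\ge1$, which delivers the stated $\mathcal{O}(\epsilon^{-2})[\ln(\epsilon^{-\frac{1}{2}})]^2$.

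I expect the only genuinely delicate point to be the justification that $C$ is a true constant, i.e.\ that the factor $\mathsf{J}$ bounding $\sup_\tau\Lnorm{\Vert z^\tau-x^*\Vert}^2$ is finite and $t$-independent; this is exactly what Proposition \ref{prop:l2:boundedness:smooth:convex} provides, and without the $\mathsf{L}^2$-boundedness of the iterates the $\sigma_L^2\mathsf{J}$ contribution---which is present only because of the multiplicative noise---could not be controlled. By comparison, the integral estimate for $\sum_\tau N_\tau$ and the bookkeeping of the logarithmic exponent are routine.
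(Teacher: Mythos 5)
Your proposal is correct and follows essentially the same route as the paper's proof: collapse the four $\mathcal{O}((t+2)^{-2})$ terms of Theorem \ref{thm:rate:smooth:convex:multiplicative:noise} into a single $t$-independent constant (with finiteness of $\mathsf{J}$ supplied by Proposition \ref{prop:l2:boundedness:smooth:convex}), solve for $T=\mathcal{O}(\epsilon^{-\frac{1}{2}})$, and sum the sampling policy over the horizon; the paper simply fixes $b=\tfrac{1}{2}$ at the outset where you keep $b$ general and restrict to $b\le\tfrac{1}{2}$ at the end, which is the same resolution of the exponent bookkeeping. One cosmetic slip: for an \emph{increasing} integrand the comparison runs $\sum_{\tau=1}^{T}h(\tau)\le\int_{1}^{T+1}h(t)\,\dist t$, not $\int_{0}^{T}h(t)\,\dist t$ (the latter is a \emph{lower} bound on the sum); this does not affect the order, and indeed the paper dispenses with integrals entirely, using the crude bound $\sum_{\tau=1}^{T}\tau^{3}(\ln\tau)^{2}\le T^{4}(\ln T)^{2}$.
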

\begin{proof}
In Theorem \ref{thm:rate:smooth:convex:multiplicative:noise}, we set $b=\frac{1}{2}$. For every $t\in\mathbb{N}$, let $B_{t+1}$ be the right hand side expression in the bound of the optimality gap $\esp[g(z^{t+1})-g^*]$ stated in Theorem \ref{thm:rate:smooth:convex:multiplicative:noise}. Up to a constant $B>0$, for every $t\in\mathbb{N}$, we have
$$
\esp[g(z^{t})-g^*]\le B_t\le \frac{B}{t^2}.
$$
Given $\epsilon>0$, let $T$ be the least natural number such that $BT^{-2}\le\epsilon$. Then $T=\mathcal{O}(\epsilon^{-\frac{1}{2}})$, $\esp\left[g(z^T)-g^*\right]\le\epsilon$ and
\begin{eqnarray*}
\sum_{\tau=1}^TN_\tau\lesssim\sum_{\tau=1}^T\tau^3(\ln\tau)^2\lesssim T^4(\ln T)^2\lesssim\epsilon^{-2}\left[\ln\left(\epsilon^{-\frac{1}{2}}\right)\right]^2.
\end{eqnarray*}
We have thus proved the required claims.\qed
\end{proof}

\begin{remark}[Constants for the smooth non-strongly convex case] \label{remark:constants:smooth:convex}
We discuss the constants in the bounds of Theorem \ref{thm:rate:smooth:convex:multiplicative:noise} and Corollary \ref{cor:rate:complexity:smooth:convex:multiplicative:noise} and compare it to previous bounds under \eqref{equation:uniform:variance:intro}. The optimality gap rate in Theorem \ref{thm:rate:smooth:convex:multiplicative:noise} depends on \emph{$t_0$ initial iterates} (with possibly $t_0>1$) given in \eqref{equation:thm:rate:smooth:convex:multiplicative:noise:t0} and \eqref{equation:thm:rate:smooth:convex:multiplicative:noise:J}. This requirement is needed in Proposition \ref{prop:l2:boundedness:smooth:convex} since \emph{no boundedness of the oracle's variance is assumed} a priori. Another distinctive feature is the presence of the factor $(t_0+1)^2$ in \eqref{equation:thm:rate:smooth:convex:multiplicative:noise:J} as a consequence of \emph{acceleration} under Assumption \ref{assump:oracle:multiplicative:noise}. These observations require showing that $t_0$ in \eqref{equation:thm:rate:smooth:convex:multiplicative:noise:t0} is not too large. In that respect, we note that $t_0$ \emph{does not depend on} $x^*\in \mbox{S}(f,\varphi)$, but only on $(\alpha_1\sigma_L)^2$ and the exogenous parameters $\phi$, $N_0$, $b$ and $\delta$. If we assume the standard Lipschitz continuity \eqref{equation:lipschitz:random:intro} in Lemma \ref{lemma:multiplicative:noise:from:random lipschitz}, then $\sigma_L=2L$ for
$
L:=\Lnorm{\mathsf{L}(\xi)}.
$
Also, the stepsize satisfies $\alpha_1\le\frac{\mu}{L}$ so that $(\alpha_1\sigma_L)^2\le4\mu^2$.
 
We thus conclude: the iteration $t_0$ is dictated solely by the multiplicative per unit distance variance $\sigma_L^2$, independently of the variances $\{\sigma(x)^2\}_{x\in X}$ at the points of the feasible set $X$. Moreover, assuming $L$ is known for the stepsize policy, there exists an upper bound on $t_0$ which is also independent of $\sigma_L^2$ and only depends on the exogenous parameters $\mu$, $N_0$, $b$ and $\delta$ chosen on the stepsize and sampling rate policies. 

For instance, if we set $\phi:=\mu:=b:=\frac{1}{2}$, $N_0:=2$, then $t_0\sim\lceil 44.52-\delta\rceil$. 
We now set $\delta:=44$ so that $t_0=1$ and $(t_0+1)^2=4$. We further choose $a\sim L$. Then using \eqref{equation:thm:rate:smooth:convex:multiplicative:noise:J}, $\alpha_1\le\frac{\mu}{L}$, the bound in Theorem \ref{thm:rate:smooth:convex:multiplicative:noise} becomes of the form
\begin{eqnarray*}
\esp\left[g(z^{t+1})-g^*\right]&\lesssim &\frac{1}{t^2}\left\{\esp\left[g(z^1)-g^*+L\Vert s^1-x^*\Vert^2\right]+\frac{\sigma(x^*)^2}{L^2}\right\}.
\end{eqnarray*}
We note that we may further control $\esp[g(z^1)-g^*]$ in terms of $L\esp[\Vert z^1-x^*\Vert^2]$ by using inequality \eqref{equation:prox:and:approx:inequalities:SA} of Proposition \ref{proposition:error:bound:fista}. Using this observation, the above inequality resembles bounds obtained if it was supposed that \eqref{equation:uniform:variance:intro} holds \emph{but replacing $\sigma$ by $\sigma(x^*)$} (see \cite{ghadimi:lan2016}, Corollary 5, equations (3.38) and (3.40)). In this sense, we improve previous results by showing that under the more aggressive setting of Assumption \ref{assump:oracle:multiplicative:noise}, our bounds depends on \emph{local variances $\sigma(x^*)^2$} at points $x^*\in\mbox{S}(f,\varphi)$. Moreover, in case \eqref{equation:uniform:variance:intro} indeed holds, our bounds are sharper than in \cite{ghadimi:lan2016} since typically $\sigma(x^*)^2\ll\sigma^2$ for large $\sqrt{\diam(X)}$ (see Example \ref{example:1}). This may be seen as a \emph{localization property} of \textsf{Algorithm \ref{algorithm:smooth:convex}} in terms of the oracle's variance.
\end{remark}

\section{Smooth strongly convex optimization with multiplicative noise}\label{section:smooth:strongly:convex}
We propose \textsf{Algorithm \ref{algorithm:smooth:strongly:convex}} for the problem \eqref{problem:min:composite} assuming an stochastic oracle satisfying \eqref{equation:expected:valued:objective} and Assumption \ref{assump:oracle:multiplicative:noise} (multiplicative noise) in the case the objective $f$ is smooth strongly convex satisfying \eqref{equation:smoothness:intro} and \eqref{equation:strong:convexity}.
\begin{algorithm}
\caption{Stochastic proximal gradient method with dynamic mini-batching}\label{algorithm:smooth:strongly:convex}
\begin{algorithmic}[1]
  \scriptsize
  \STATE INITIALIZATION: initial iterate $x^1$, positive stepsize sequence $\{\alpha_t\}$ and sampling rate $\{N_t\}$.
  \STATE ITERATIVE STEP: Given $x^t$, generate i.i.d. sample $\xi^t:=\{\xi_j^t\}_{j=1}^{N_t}$ from $\probn$  independently from previous samples. Compute
  $$ 
  F_t'(x^t,\xi^t):=\frac{1}{N_t}\sum_{j=1}^{N_t}\nabla F(x^t,\xi^t_j).
  $$
  Then set
  	\begin{eqnarray}
	x^{t+1}:=P^{\alpha_t\varphi}_{x^t}\left[\alpha_tF'(x^t,\xi^t)\right]
	=\argmin_{x\in X}\left\{\ell_f\left(x^t,F'(x^t,\xi^t);x\right)+\frac{1}{2\alpha_t}\Vert x-x^t\Vert^2+\varphi(x)\right\}.\label{equation:algo:strongly:convex}
	\end{eqnarray}
\end{algorithmic}
\end{algorithm}
For $t\in\mathbb{N}$, we define the stochastic error
$
\epsilon^t := F_t'(x^t,\xi^t)-\nabla f(x^t)
$
and the filtration
$
\alg_t:=\sigma(x^1,\xi^1\ldots,\xi^{t-1}).
$

\subsection{Derivation of error bounds}

\begin{proposition}\label{prop:bound:strong:convex}
Suppose Assumption \ref{assump:iid:sampling}-\ref{assump:oracle:multiplicative:noise} hold for the problem \eqref{problem:min:composite} satisfying \eqref{equation:expected:valued:objective} as well as  \eqref{equation:smoothness:intro}-\eqref{equation:strong:convexity}. Let $x^*$ be the unique solution of $\eqref{problem:min:composite}$. Suppose $0<\alpha_t<\frac{1}{L}$ for all $t\in\mathbb{N}$.

Then the sequence generated by \textsf{\emph{Algorithm \ref{algorithm:smooth:strongly:convex}}} satisfies for all $t\in\mathbb{N}$, 
\begin{eqnarray*}
\esp\left[\Vert x^{t+1}-x^*\Vert^2|\alg_t\right]&\le &\left[1-c\alpha_t+\frac{2(\alpha_t\sigma_L)^2}{(1-L\alpha_t)N_t}\right]\Vert x^t-x^*\Vert^2+\frac{2\alpha_t^2\sigma(x^*)^2}{(1-L\alpha_t)N_t},\\
\esp\left[g(x^{t+1})-g^*\right]&\le &\left[\frac{\left(\alpha_t^{-1}-c\right)}{2}+\frac{2\alpha_t\sigma_L^2}{(1-L\alpha_t)N_t}\right]\esp\left[\Vert x^t-x^*\Vert^2\right]+\frac{2\alpha_t\sigma(x^*)^2}{(1-L\alpha_t)N_t}.
\end{eqnarray*}
\end{proposition}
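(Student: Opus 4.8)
The plan is to reproduce the opening moves of the proof of Proposition~\ref{proposition:error:bound:fista}, but stripped of the extrapolation bookkeeping and, crucially, invoking the strong convexity lower estimate of Lemma~\ref{lemma:inner:upper:approximation:inequality} (with $c_p:=c$) where Proposition~\ref{proposition:error:bound:fista} used only convexity. First I would establish a single per-iteration inequality valid for every $x\in X$. Starting from $g(x^{t+1})=f(x^{t+1})+\varphi(x^{t+1})$ and the upper (smoothness) estimate of Lemma~\ref{lemma:inner:upper:approximation:inequality} applied to $f$ at $x^t$, I rewrite the linearization through $\nabla f(x^t)=F'(x^t,\xi^t)-\epsilon^t$, apply the prox inequality of Lemma~\ref{lemma:proximal inequality} to the convex function $\ell_f(x^t,F'(x^t,\xi^t);\cdot)+\varphi$ with $\alpha:=\alpha_t$, $y:=x^t$, $z:=x^{t+1}$ (via the update \eqref{equation:algo:strongly:convex}), and finally replace the linearization of $f$ by $f$ itself through the strong convexity lower bound and $g=f+\varphi$. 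This yields, for all $x\in X$,
\begin{eqnarray*}
g(x^{t+1})&\le& g(x)+\frac{(\alpha_t^{-1}-c)}{2}\Vert x-x^t\Vert^2-\frac{1}{2\alpha_t}\Vert x-x^{t+1}\Vert^2\\
&&-\frac{(1-L\alpha_t)}{2\alpha_t}\Vert x^{t+1}-x^t\Vert^2+\langle\epsilon^t,x-x^{t+1}\rangle.
\end{eqnarray*}

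For the distance bound I set $x:=x^*$ and use $g(x^{t+1})-g^*\ge0$ to discard the left side; isolating $\frac{1}{2\alpha_t}\Vert x^{t+1}-x^*\Vert^2$ and multiplying by $2\alpha_t$ gives
\begin{eqnarray*}
\Vert x^{t+1}-x^*\Vert^2\le(1-c\alpha_t)\Vert x^t-x^*\Vert^2-(1-L\alpha_t)\Vert x^{t+1}-x^t\Vert^2+2\alpha_t\langle\epsilon^t,x^*-x^{t+1}\rangle.
\end{eqnarray*}
The decisive point is that the cross term is \emph{not} a martingale difference, since $x^{t+1}$ depends on $\xi^t$ hence on $\epsilon^t$. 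I resolve this by the splitting $x^*-x^{t+1}=(x^*-x^t)+(x^t-x^{t+1})$: the first piece yields $\langle\epsilon^t,x^*-x^t\rangle$, which \emph{is} a martingale difference because $x^t,x^*\in\alg_t$ and $\esp[\epsilon^t|\alg_t]=0$, while the second is absorbed by the negative quadratic through Young's inequality $2\alpha_t\langle\epsilon^t,x^t-x^{t+1}\rangle\le\frac{\alpha_t^2}{1-L\alpha_t}\Vert\epsilon^t\Vert^2+(1-L\alpha_t)\Vert x^t-x^{t+1}\Vert^2$, whose quadratic part cancels exactly. Taking $\esp[\cdot|\alg_t]$ annihilates the martingale term, and the conditional form of Lemma~\ref{lemma:oracle:variance:decay} together with $(\sigma(x^*)+\sigma_L\Vert x^t-x^*\Vert)^2\le2\sigma(x^*)^2+2\sigma_L^2\Vert x^t-x^*\Vert^2$ turns $\esp[\Vert\epsilon^t\Vert^2|\alg_t]$ into the stated coefficients; note that the multiplicative part $\sigma_L\Vert x^t-x^*\Vert$ is precisely what produces the factor $\frac{2(\alpha_t\sigma_L)^2}{(1-L\alpha_t)N_t}$ multiplying $\Vert x^t-x^*\Vert^2$.

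For the function-value bound I again set $x:=x^*$ in the per-iteration inequality but keep $g(x^{t+1})-g^*$ on the left and drop the nonpositive term $-\frac{1}{2\alpha_t}\Vert x^{t+1}-x^*\Vert^2$. The same splitting of $\langle\epsilon^t,x^*-x^{t+1}\rangle$, now with a Young estimate against $-\frac{(1-L\alpha_t)}{2\alpha_t}\Vert x^{t+1}-x^t\Vert^2$ that retains the coefficient $\frac{\alpha_t}{1-L\alpha_t}$ on $\Vert\epsilon^t\Vert^2$, reduces the cross term to the martingale piece $\langle\epsilon^t,x^*-x^t\rangle$ plus a multiple of $\Vert\epsilon^t\Vert^2$. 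Taking total expectation kills the martingale by the tower property, and Lemma~\ref{lemma:oracle:variance:decay} with the same $2a^2+2b^2$ bound delivers the second inequality exactly. The only genuine obstacle throughout is the control of the non-adapted cross term against the negative quadratic furnished by the prox step; everything else is the routine combination of Lemmas~\ref{lemma:proximal inequality}, \ref{lemma:inner:upper:approximation:inequality} and \ref{lemma:oracle:variance:decay} already exploited in Proposition~\ref{proposition:error:bound:fista}.
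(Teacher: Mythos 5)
Your proposal is correct and follows essentially the same route as the paper's proof: the same combination of Lemma~\ref{lemma:proximal inequality}, both estimates of Lemma~\ref{lemma:inner:upper:approximation:inequality}, the splitting of $\langle\epsilon^t,x^*-x^{t+1}\rangle$ into the adapted martingale piece $\langle\epsilon^t,x^*-x^t\rangle$ plus a Young estimate that cancels against $-\frac{(1-L\alpha_t)}{2\alpha_t}\Vert x^{t+1}-x^t\Vert^2$, and the conditional form of Lemma~\ref{lemma:oracle:variance:decay} with $(a+b)^2\le2a^2+2b^2$. The only (cosmetic) difference is that you derive one master inequality valid for all $x\in X$ and specialize it twice, whereas the paper runs the two specializations as separate parallel computations.
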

\begin{proof}
We use \eqref{equation:algo:strongly:convex} and Lemma \ref{lemma:proximal inequality} with the convex function
$
p:=\ell_f\left(x^t,F'(x^t,\xi^t);\cdot\right)+\varphi
$
and $\alpha:=\alpha_t$, $y:=x^t$, $z:=x^{t+1}$ and $x:=x^*$ obtaining
\begin{eqnarray}
\frac{1}{2\alpha_t}\Vert x^*-x^{t+1}\Vert^2 &\le &\frac{1}{2\alpha_t}\Vert x^*-x^{t}\Vert^2+\ell_{f}(x^t,F'_t(x^t,\xi^t);x^*)+\varphi(x^*)-\ell_f(x^t, F'_t(x^t,\xi^t);x^{t+1})-\varphi(x^{t+1})\nonumber\\
&&-\frac{1}{2\alpha_t}\Vert x^{t+1}-x^t\Vert^2\nonumber\\
&=&\frac{1}{2\alpha_t}\Vert x^*-x^{t}\Vert^2+\ell_{f}(x^t,\nabla f(x^t);x^*)+\varphi(x^*)-\ell_f(x^t,\nabla f(x^t);x^{t+1})-\varphi(x^{t+1})\nonumber\\
&&-\frac{1}{2\alpha_t}\Vert x^{t+1}-x^t\Vert^2+\langle\epsilon^t,x^*-x^{t}\rangle+\langle\epsilon^t,x^t-x^{t+1}\rangle\nonumber\\
&\le &\frac{1}{2\alpha_t}\Vert x^*-x^{t}\Vert^2+\left[f(x^*)+\varphi(x^*)\right]-\frac{c}{2}\Vert x^*-x^t\Vert^2-\left[f(x^{t+1})+\varphi(x^{t+1})\right]+\frac{L}{2}\Vert x^{t+1}-x^t\Vert^2\nonumber\nonumber\\
&&-\frac{1}{2\alpha_t}\Vert x^{t+1}-x^t\Vert^2+\langle\epsilon^t,x^*-x^{t}\rangle+\langle\epsilon^t,x^t-x^{t+1}\rangle\nonumber\\
&=&\frac{(1-c\alpha_t)}{2\alpha_t}\Vert x^*-x^{t}\Vert^2+g^*-g(x^{t+1})-\frac{(1-L\alpha_t)}{2\alpha_t}\Vert x^{t+1}-x^t\Vert^2+\langle\epsilon^t,x^t-x^{t+1}\rangle\nonumber\\
&&+\langle\epsilon^t,x^*-x^{t}\rangle\nonumber\\
&\le &\frac{(1-c\alpha_t)}{2\alpha_t}\Vert x^*-x^{t}\Vert^2+\frac{\alpha_t}{2(1-L\alpha_t)}\Vert\epsilon^t\Vert^2+\langle\epsilon^t,x^*-x^{t}\rangle,\label{equation:prop:error:bound:strongly:convex:eq0}
\end{eqnarray}
where we used \eqref{equation:linearization:directed}-\eqref{equation:linearization} and definition of $\epsilon^t$ in the first equality, the lower and upper inequalities of Lemma \ref{lemma:inner:upper:approximation:inequality} for $p:=f$ (by strong convexity and smoothness of $f$) in second inequality while in the last inequality we used $g^*-g(x^{t+1})\le0$ and Young's inequality $2\langle\epsilon^t,x^t-x^{t+1}\rangle\le\lambda^{-1}\Vert\epsilon^t\Vert^2+\lambda\Vert x^{t+1}-x^t\Vert^2 $ with $\lambda:=\frac{1-L\alpha_t}{\alpha_t}>0$ (since $0<L\alpha_t<1$ by assumption).

We now observe that $x^t\in\alg_t$, $\xi^t\perp\perp\alg_t$, $\epsilon^t=F'(x^t,\xi^t)-\nabla f(x^t)$, Assumption \ref{assump:iid:sampling} and \eqref{equation:expected:valued:objective} imply that $\esp[\langle\epsilon^t, x^*-x^t\rangle|\alg_t]=0$. Using this observation and $x^t\in\alg_t$, we have that
\begin{eqnarray}
\esp\left[\frac{\alpha_t}{2(1-L\alpha_t)}\Vert\epsilon^t\Vert^2+\langle\epsilon^t,x^*-x^{t}\rangle\Bigg|\alg_t\right]&\le &\frac{2\alpha_t}{(1-L\alpha_t)N_t}\left[\sigma(x^*)^2+\sigma_L^2\Vert x^t-x^*\Vert^2\right],\label{equation:prop:error:bound:strongly:convex:eq0:2}
\end{eqnarray}
where we have used the relation $(a+b)^2\le2a^2+2b^2$ and
$
\Lnorm{\Vert\epsilon^t\Vert|\alg_t}\le\frac{\sigma(x^*)+\sigma_L\Vert x^t-x^*\Vert}{\sqrt{N_t}},
$
which follows from Assumption \ref{assump:iid:sampling}-\ref{assump:oracle:multiplicative:noise}, $\epsilon^t=\sum_{j=1}^{N_t}\frac{\nabla F(x^t,\xi_j^t)-\nabla f(x^t)}{N_t}$, relation \eqref{equation:expected:valued:objective} and Lemma \ref{lemma:oracle:variance:decay}.

We take $\esp[\cdot|\alg_t]$, use $x^t\in\alg_t$ and multiply by $2\alpha_t$ in \eqref{equation:prop:error:bound:strongly:convex:eq0} and use \eqref{equation:prop:error:bound:strongly:convex:eq0:2} to get
\begin{eqnarray}
\esp\left[\Vert x^{t+1}-x^*\Vert^2|\alg_t\right]&\le &(1-c\alpha_t)\Vert x^*-x^{t}\Vert^2+\frac{2\alpha_t^2}{(1-L\alpha_t)N_t}\left[\sigma(x^*)^2+\sigma_L^2\Vert x^t-x^*\Vert^2\right]\nonumber\\
&=&\left[1-c\alpha_t+\frac{2\alpha_t^2\sigma_L^2}{(1-L\alpha_t)N_t}\right]\Vert x^t-x^*\Vert^2
+\frac{2\alpha_t^2\sigma(x^*)^2}{(1-L\alpha_t)N_t}.\label{equation:prop:error:bound:strongly:convex:eq1}
\end{eqnarray}
To conclude, we take total expectation above and use the hereditary property $\esp[\esp[\cdot|\alg_t]]=\esp[\cdot]$.

We now prove the second claim. We use the upper inequality of Lemma \ref{lemma:inner:upper:approximation:inequality} with $p:=f$ (by smoothness of $f$) and obtain
\begin{eqnarray}
g(x^{t+1})&\le &\ell_f(x^t;x^{t+1})+\varphi(x^{t+1})+\frac{L}{2}\Vert x^{t+1}-x^t\Vert^2\nonumber\\
&=&\ell_f(x^t,F'(x^t,\xi^t);x^{t+1})+\varphi(x^{t+1})+\frac{L}{2}\Vert x^{t+1}-x^t\Vert^2+\langle-\epsilon^t,x^{t+1}-x^t\rangle\nonumber\\
&\le &\ell_f(x^t,F'(x^t,\xi^t);x^*)+\varphi(x^*)+\frac{\alpha_k^{-1}}{2}\Vert x^*-x^t\Vert^2-\frac{(\alpha_t^{-1}-L)}{2}\Vert x^{t+1}-x^t\Vert^2-\frac{\alpha_t^{-1}}{2}\Vert x^*-x^{t+1}\Vert^2\nonumber\\
&&+\langle\epsilon^t,x^{t}-x^{t+1}\rangle\nonumber\\
&=&\ell_f(x^t;x^*)+\varphi(x^*)+\frac{\alpha_t^{-1}}{2}\Vert x^*-x^t\Vert^2-\frac{(\alpha_t^{-1}-L)}{2}\Vert x^{t+1}-x^t\Vert^2-\frac{\alpha_t^{-1}}{2}\Vert x^*-x^{t+1}\Vert^2\nonumber\\
&&+\langle\epsilon^t,x^*-x^t\rangle+\langle\epsilon^t,x^t-x^{t+1}\rangle\nonumber\\
&\le &g(x^*)-\frac{c}{2}\Vert x^*-x^t\Vert^2+\frac{\alpha_t^{-1}}{2}\Vert x^*-x^t\Vert^2+\langle\epsilon^t,x^*-x^t\rangle
-\frac{(1-L\alpha_t)}{2\alpha_t}\Vert x^{t+1}-x^t\Vert^2+\langle\epsilon^t,x^t-x^{t+1}\rangle\nonumber\\
&\le &g(x^*)+\frac{\left(\alpha_t^{-1}-c\right)}{2}\Vert x^*-x^t\Vert^2+\langle\epsilon^t,x^*-x^t\rangle
+\frac{\alpha_t\Vert\epsilon^t\Vert^2}{2(1-L\alpha_t)},\label{equation:prop:error:bound:strongly:convex:eq2}
\end{eqnarray}
where we used \eqref{equation:linearization:directed}-\eqref{equation:linearization}, definition of the prox-mapping and definition of $\epsilon^t$ in the equalities, the expression \eqref{equation:algo:strongly:convex} and Lemma \ref{lemma:proximal inequality} with $\alpha:=\alpha_t$, $y:=x^t$, $z:=x^{t+1}$, $x:=x^*$ and the convex function
$
p:=\ell_f\left(x^t,F'(x^t,\xi^t);\cdot\right)+\varphi
$
in the second inequality, the lower inequality of Lemma \ref{lemma:inner:upper:approximation:inequality} with $p:=f$ (by strong convexity of $f$) in third inequality while in last inequality we used Young's inequality $2\langle\epsilon^t,x^t-x^{t+1}\rangle\le\lambda^{-1}\Vert\epsilon^t\Vert^2+\lambda\Vert x^{t+1}-x^t\Vert^2 $ with $\lambda:=\frac{1-L\alpha_t}{\alpha_t}>0$ (since $0<L\alpha_t<1$ by assumption).

We take $\esp[\cdot|\alg_t]$ and use $x^t\in\alg_t$ in \eqref{equation:prop:error:bound:strongly:convex:eq2} and then further use \eqref{equation:prop:error:bound:strongly:convex:eq0:2} to finally obtain
\begin{eqnarray*}
\esp\left[g(x^{t+1})-g^*|\alg_t\right]&\le &\frac{\left(\alpha_t^{-1}-c\right)}{2}\Vert x^*-x^t\Vert^2
+\frac{2\alpha_t}{(1-L\alpha_t)N_t}\left[\sigma(x^*)^2+\sigma_L^2\Vert x^t-x^*\Vert^2\right]
\\
&=&\left[\frac{\left(\alpha_t^{-1}-c\right)}{2}+\frac{2\alpha_t\sigma_L^2}{(1-L\alpha_t)N_t}\right]\Vert x^t-x^*\Vert^2+\frac{2\alpha_t\sigma(x^*)^2}{(1-L\alpha_t)N_t}.
\end{eqnarray*}
Finally, we take $\esp[\cdot|\alg_t]$ again and use $\esp[\esp[\cdot|\alg_t]]=\esp[\cdot]$ to finish the proof.\qed
\end{proof}

\subsection{Convergence rate and oracle complexity}

We now derive a convergence rate and estimate the oracle complexity of \textsf{Algorithm \ref{algorithm:smooth:strongly:convex}}.

\begin{theorem}\label{thm:rate:strong:convex}
Suppose the assumptions of Proposition \ref{prop:bound:strong:convex} hold. Let $\mu\in(0,1)$ and $\phi\in(0,1)$ such that
$
\phi\in(0,1-\mu c/L).
$
Choose $\zeta\in(0,1)$ and $N_0\in\mathbb{N}$ and set the stepsize and sampling rate sequences as
$$
\alpha_t\equiv\alpha:=\frac{\mu}{L},\quad N_t=N_0\lfloor\zeta^{-t}\rfloor,\quad (t\in\mathbb{N}).
$$
Define 
$
\rho:=\left(1-\mu\frac{c}{2L}+\phi\right)\bigvee\zeta<1.
$
Let
\begin{eqnarray}
t_0&:=&\left\lceil\log_{\frac{1}{\zeta}}\left(\frac{2\mu^2}{(1-\mu)\phi N_0}\cdot\frac{\sigma_L^2}{L^2}\right)\right\rceil\bigvee1.\label{equation:t0:strongly:convex}
\end{eqnarray}

Then there exists constant $\mathsf{C}>0$ such that
\begin{eqnarray}
\esp[\Vert x^{t+1}-x^*\Vert^2]&\le &\mathsf{C}\rho^{t+1},\quad\forall t\ge1\label{equation:linear:rate:iterates:from:t=1}\\
\esp\left[g(x^{t+1})-g^*\right]&\le &\left[\frac{\left(L\mu^{-1}-c\right)}{2}+\frac{2\mu}{(1-\mu)N_0}\cdot\frac{\sigma_L^2}{L}\zeta^t\right]\mathsf{C}\rho^t+\frac{2\mu}{(1-\mu)N_0}\cdot\frac{\sigma(x^*)^2}{L}\zeta^t,\quad\forall t\ge2\label{equation:linear:rate:optimality:from:t=2}
\end{eqnarray}
\end{theorem}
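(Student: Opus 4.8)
The plan is to collapse both assertions onto a single scalar recursion in $a_t:=\esp[\Vert x^t-x^*\Vert^2]$ and study its geometric decay. Taking total expectation in the first inequality of Proposition \ref{prop:bound:strong:convex} and inserting $\alpha_t\equiv\mu/L$ (so $L\alpha_t=\mu$ and $1-L\alpha_t=1-\mu$) yields, for every $t\in\mathbb{N}$,
\[
a_{t+1}\le\Big(1-\tfrac{\mu c}{L}+c_t\Big)a_t+d_t,\qquad c_t:=\tfrac{2\mu^2\sigma_L^2}{L^2(1-\mu)N_t},\qquad d_t:=\tfrac{2\mu^2\sigma(x^*)^2}{L^2(1-\mu)N_t}.
\]
Since $N_t=N_0\lfloor\zeta^{-t}\rfloor$ grows like $N_0\zeta^{-t}$, both the multiplicative perturbation $c_t$ and the additive noise $d_t$ decay geometrically at rate $\zeta$.

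First I would pin down the role of $t_0$: it is exactly the first index from which the multiplicative perturbation of the contraction factor is dominated, i.e.\ $c_t\le\phi$ for all $t\ge t_0$. Indeed $c_t\le\phi$ is equivalent to $N_t\ge 2\mu^2\sigma_L^2/[L^2(1-\mu)\phi]$, and solving $N_0\zeta^{-t}\ge 2\mu^2\sigma_L^2/[L^2(1-\mu)\phi]$ for $t$ gives precisely the expression \eqref{equation:t0:strongly:convex}. Consequently, for $t\ge t_0$ the contraction factor satisfies $1-\tfrac{\mu c}{L}+c_t\le 1-\tfrac{\mu c}{L}+\phi$, which is smaller than $\rho\ge 1-\tfrac{\mu c}{2L}+\phi$ by a margin of at least $\tfrac{\mu c}{2L}$. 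This deliberate slack built into $\rho$ (the factor $\tfrac12$ on $\mu c/L$) is exactly what will absorb $d_t$.

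Next I would establish \eqref{equation:linear:rate:iterates:from:t=1} by induction on $t\ge t_0$. In the inductive step, using $a_t\le\mathsf{C}\rho^t$ and $c_t\le\phi$,
\[
a_{t+1}\le\Big(1-\tfrac{\mu c}{L}+\phi\Big)\mathsf{C}\rho^t+d_t\le\rho\,\mathsf{C}\rho^t-\tfrac{\mu c}{2L}\mathsf{C}\rho^t+d_t,
\]
so it suffices that $d_t\le\tfrac{\mu c}{2L}\mathsf{C}\rho^t$; since $d_t\lesssim\tfrac{2\mu^2\sigma(x^*)^2}{L^2(1-\mu)N_0}\zeta^t\le\tfrac{2\mu^2\sigma(x^*)^2}{L^2(1-\mu)N_0}\rho^t$ (because $\zeta\le\rho$), this holds once $\mathsf{C}\ge\tfrac{4\mu\sigma(x^*)^2}{cL(1-\mu)N_0}$. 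To cover the remaining indices $1\le t\le t_0$, where the contraction argument is unavailable because $c_t$ may exceed $1$, I would simply enlarge $\mathsf{C}$ to also satisfy $\mathsf{C}\ge\max_{1\le t\le t_0}a_t\rho^{-t}$; this maximum is finite since $a_1=\Vert x^1-x^*\Vert^2$ is deterministic and the recursion advances $a_t$ by bounded factors over finitely many steps. Taking $\mathsf{C}$ to be the larger of these two lower bounds closes both the base case and the inductive step, yielding $a_t\le\mathsf{C}\rho^t$ for all $t\ge1$.

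Finally, the optimality-gap bound \eqref{equation:linear:rate:optimality:from:t=2} follows by substituting the just-proved estimate $\esp[\Vert x^t-x^*\Vert^2]\le\mathsf{C}\rho^t$ (valid for $t\ge2$, which is why that claim starts at $t\ge2$) into the second inequality of Proposition \ref{prop:bound:strong:convex} with $\alpha_t\equiv\mu/L$, and replacing $1/N_t$ by its geometric envelope $\zeta^t/N_0$. I expect the main obstacle to be the nonautonomous, \emph{state-dependent} nature of the recursion: because $\sigma_L>0$, the term $c_t$ perturbs the contraction \emph{factor} itself and can push it above $1$ during the initial phase $t<t_0$, so monotone decay cannot be expected from the start. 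The crux is therefore the bookkeeping around $t_0$ — verifying that the schedule $N_t\sim N_0\zeta^{-t}$ forces $c_t\le\phi$ precisely past $t_0$, and that the $\tfrac{\mu c}{2L}$ surplus in $\rho$ lets a single constant $\mathsf{C}$ simultaneously absorb the additive noise $d_t$ and dominate the finite initial segment.
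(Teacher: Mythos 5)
Your proof is correct and is essentially the paper's own argument: both work from the scalar recursion of Proposition \ref{prop:bound:strong:convex}, identify $t_0$ in \eqref{equation:t0:strongly:convex} as exactly the index past which the multiplicative perturbation $c_t$ drops below $\phi$, use the $\mu c/(2L)$ slack built into $\rho$ together with $\zeta\le\rho$ to absorb the additive noise term, handle the finitely many indices $t<t_0$ separately, and obtain \eqref{equation:linear:rate:optimality:from:t=2} by substituting the iterate bound into the second recursion of Proposition \ref{prop:bound:strong:convex}. The only substantive difference is bookkeeping: you close the recursion by guess-and-verify induction (absorbing $d_t$ into the slack at every step) where the paper unrolls it into a geometric convolution sum bounded via $\bigl(1-\tfrac{\lambda+\phi}{\rho}\bigr)^{-1}\le\tfrac{2L\rho}{c\mu}$, and your initial-segment constant $\max_{1\le t\le t_0}a_t\rho^{-t}$ is cruder than the paper's explicit $\mathsf{C}_1$ in \eqref{equation:c1:strongly:convex} --- sufficient for the existence statement being proved, though it does not produce the explicit form \eqref{equation:c:strongly:convex} of $\mathsf{C}$ that Remark \ref{remark:constants:strongly:convex} later exploits for the variance-localization discussion.
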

\begin{proof}
For simplicity of notation, in the following we set 
$
v_t:=\esp\left[\Vert x^t-x^*\Vert^2\right], 
$
and define
\begin{eqnarray}
\beta:=\frac{2\alpha^2\sigma(x^*)^2}{(1-L\alpha)N_0},\quad\delta:=\frac{2(\alpha\sigma_L)^2}{(1-L\alpha)N_0},\quad\lambda:=1-c\alpha.\label{equation:thm:rate:strong:convex:beta:delta:lambda}
\end{eqnarray}
Note that from the definitions of $\lambda$ and $\alpha=\frac{\mu}{L}$, we get 
$
\lambda=1-\mu\frac{c}{L}\in(0,1).
$

From the first recursion in Proposition \ref{prop:bound:strong:convex}, $N_t^{-1}\le N_0^{-1}\zeta^t$ and the above definitions, we have for all $t\in\mathbb{N}$,
\begin{eqnarray}\label{equation:thm:rate:strong:convex1}
v_{t+1}\le\left(\lambda+\delta\zeta^t\right)v_t+\beta\zeta^t.
\end{eqnarray}
Moreover, by definitions of $t_0$, $\delta$, $\lambda$, $\rho$ and $\alpha=\mu/L$ we have for all $t\ge t_0$,
\begin{eqnarray}\label{equation:thm:rate:strong:convex2}
\lambda<\lambda+\delta\zeta^t\le\lambda+\delta\zeta^{t_0}\le\lambda+\phi<\rho.
\end{eqnarray}

We now claim that for all $t\ge t_0$,
\begin{eqnarray}\label{equation:thm:rate:strong:convex3}
v_{t+1}\le(\lambda+\phi)^{t+1-t_0}v_{t_0}+\beta\sum_{\tau=0}^{t-t_0}(\lambda+\phi)^{\tau}\zeta^{t-\tau}.
\end{eqnarray}
We prove the claim by induction. Indeed, for $t:=t_0$ the claim \eqref{equation:thm:rate:strong:convex3} follows from \eqref{equation:thm:rate:strong:convex1}. Supposing \eqref{equation:thm:rate:strong:convex3} holds for $t\ge t_0$, then again by \eqref{equation:thm:rate:strong:convex1},
\begin{eqnarray}
v_{t+2}&\le &\left(\lambda+\delta\zeta^{t+1}\right)v_{t+1}+\beta\zeta^{t+1}\nonumber\\
&\le &\left(\lambda+\phi\right)\left[(\lambda+\phi)^{t+1-t_0}v_{t_0}+\beta\sum_{\tau=0}^{t-t_0}(\lambda+\phi)^{\tau}\zeta^{t-\tau}\right]+\beta\zeta^{t+1}\nonumber\\
&=&\left(\lambda+\phi\right)^{t+2-t_0}v_{t_0}+\beta\sum_{\tau=1}^{t+1-t_0}(\lambda+\phi)^\tau\zeta^{t+1-\tau}+\beta\zeta^{t+1}\nonumber\\
&=&\left(\lambda+\phi\right)^{t+2-t_0}v_{t_0}+\beta\sum_{\tau=0}^{t+1-t_0}(\lambda+\phi)^\tau\zeta^{t+1-\tau},
\label{equation:thm:rate:strong:convex3'}
\end{eqnarray}
where we used \eqref{equation:thm:rate:strong:convex2} in second inequality. This shows \eqref{equation:thm:rate:strong:convex3} holds for $t+1$. The claim \eqref{equation:thm:rate:strong:convex3} is hence proved.

We will now bound the sum in \eqref{equation:thm:rate:strong:convex3}. First, we note that $\rho\ge1-\mu\frac{c}{2L}+\phi$ and $\lambda+\phi<\rho$ which imply
\begin{eqnarray}\label{equation:thm:rate:strong:convex4}
\rho\ge\lambda+\phi+\mu\frac{c}{2L}\Rightarrow\frac{1}{1-\frac{\lambda+\phi}{\rho}}\le\frac{2L\rho}{c\mu}.
\end{eqnarray}
For $t\ge t_0$, we have
\begin{eqnarray}
\beta\sum_{\tau=0}^{t-t_0}(\lambda+\phi)^{\tau}\zeta^{t-\tau}&\le &
\frac{2\alpha^2\sigma(x^*)^2}{(1-L\alpha)N_0}\sum_{\tau=0}^{t-t_0}(\lambda+\phi)^{\tau}\rho^{t-\tau}\nonumber\\
&=&\frac{2\alpha^2\sigma(x^*)^2}{(1-L\alpha)N_0}\rho^t\sum_{\tau=0}^{t-t_0}\left(\frac{\lambda+\phi}{\rho}\right)^{\tau}\nonumber\\
&\le &\frac{2\alpha^2\sigma(x^*)^2}{(1-L\alpha)N_0}\cdot\frac{\rho^t}{1-\frac{\lambda+\phi}{\rho}}\nonumber\\
&\le &\frac{4\mu\sigma(x^*)^2}{(1-\mu)N_0Lc}\rho^{t+1},\label{equation:thm:rate:strong:convex5}
\end{eqnarray}
where we used definition of $\beta$ and $\zeta\le\rho$ in first inequality, \eqref{equation:thm:rate:strong:convex2} in the second inequality and $\alpha=\mu/L$ and \eqref{equation:thm:rate:strong:convex4} in the last one.

From recursion \eqref{equation:thm:rate:strong:convex3} and the bound \eqref{equation:thm:rate:strong:convex5}, we get for $t\ge t_0$,
\begin{eqnarray}
v_{t+1}&\le &(\lambda+\phi)^{t+1-t_0}v_{t_0}+\frac{4\mu\sigma(x^*)^2}{(1-\mu)N_0Lc}\rho^{t+1}\label{equation:thm:rate:strong:convex6}\\
&\le &\rho^{t+1}(\lambda+\phi)^{-t_0}v_{t_0}+\frac{4\mu\sigma(x^*)^2}{(1-\mu)N_0Lc}\rho^{t+1}
=\mathsf{C}_0\rho^{t+1},\nonumber
\end{eqnarray}
using \eqref{equation:thm:rate:strong:convex2} in the second inequality and the definitions of $\lambda$ in \eqref{equation:thm:rate:strong:convex:beta:delta:lambda} and $\mathsf{C}_0$ in \eqref{equation:c0:strongly:convex} in the equality. 

The above relation implies that the sequence $\{v_t\}$ has linear convergence once the iteration $t_0+1$ is achieved. By changing the constants $\mathsf{C}_0$ and $\rho$ properly, this implies that the whole sequence $\{v_t\}$ has linear convergence. For our purposes (see Remark \ref{remark:constants:strongly:convex}), we next derive a refined bound in terms of constants showing that linear convergence is obtained from the initial iteration when \emph{only} $\mathsf{C}_0$ is changed in \eqref{equation:thm:rate:strong:convex6} in a prescribed way.

From \eqref{equation:thm:rate:strong:convex1}, we also have for $1\le t<t_0$,
\begin{eqnarray*}
v_{t+1}\le(\lambda+\delta\zeta^t)v_t+\beta\zeta^t
\le\lambda v_t+\left(\delta\max_{\tau\in[t_0-1]}v_\tau+\beta\right)\zeta^t.
\end{eqnarray*}
We may use the above relation and proceed by induction analogously to \eqref{equation:thm:rate:strong:convex3}-\eqref{equation:thm:rate:strong:convex3'} to get for $1\le t<t_0$,
\begin{eqnarray}\label{equation:thm:rate:strong:convex7}
v_{t+1}&\le &\lambda^tv_1+\left(\delta\max_{\tau\in[t_0-1]}v_\tau+\beta\right)\sum_{\tau=0}^{t-1}\lambda^\tau\zeta^{t-\tau}
\end{eqnarray}
The sum above is bounded by
\begin{eqnarray}\label{equation:thm:rate:strong:convex8}
\sum_{\tau=0}^{t-1}\lambda^\tau\zeta^{t-\tau}\le\sum_{\tau=0}^{t-1}\lambda^\tau\rho^{t-\tau}
=\rho^t\sum_{\tau=0}^{t-1}\left(\frac{\lambda}{\rho}\right)^\tau
\le\frac{\rho^t}{1-\frac{\lambda}{\rho}}<\frac{2L\rho^{t+1}}{c\mu},
\end{eqnarray}
where we used $\zeta\le\rho$ in first inequality, \eqref{equation:thm:rate:strong:convex2} in second inequality and \eqref{equation:thm:rate:strong:convex4} in the last one. From \eqref{equation:thm:rate:strong:convex7}-\eqref{equation:thm:rate:strong:convex8}, we get for $1\le t<t_0$,
\begin{eqnarray}
v_{t+1}&\le &\rho^{t+1}\lambda^{-1}v_1+\frac{2L}{c\mu}\left(\delta\max_{\tau\in[t_0-1]}v_\tau+\beta\right)\rho^{t+1}\nonumber\\
&=&\rho^{t+1}\left[\lambda^{-1}v_1+\frac{4\left(\frac{L}{c}\right)(\alpha\sigma_L)^2}{\mu(1-\mu)N_0}\max_{\tau\in[t_0-1]}v_\tau+\frac{4\left(\frac{L}{c}\right)\alpha^2\sigma(x^*)^2}{\mu(1-\mu)N_0}\right]=\mathsf{C}_1\rho^{t+1}\label{equation:thm:rate:strong:convex9},
\end{eqnarray}
where we used \eqref{equation:thm:rate:strong:convex2} in the inequality, definitions of $\delta$, $\beta$, $\alpha=\mu/L$ and 
\begin{eqnarray}
\mathsf{C}_1:=\lambda^{-1}v_1+\frac{4\mu\sigma_L^2}{(1-\mu)N_0Lc}\cdot\max_{\tau\in[t_0-1]}v_\tau+\frac{4\mu\sigma(x^*)^2}{(1-\mu)N_0Lc},\label{equation:c1:strongly:convex}
\end{eqnarray}
in the equalities.

From \eqref{equation:thm:rate:strong:convex9}, we have in particular $v_{t_0}\le\mathsf{C}_1\rho^{t_0}$. Using this and \eqref{equation:thm:rate:strong:convex6}, we get for $t\ge t_0$,
\begin{eqnarray}
v_{t+1}&\le &(\lambda+\phi)^{t+1-t_0}\mathsf{C}_1\rho^{t_0}+\frac{4\mu\sigma(x^*)^2}{(1-\mu)N_0Lc}\rho^{t+1}\nonumber\\
&\le &\rho^{t+1-t_0}\mathsf{C}_1\rho^{t_0}+\frac{4\mu\sigma(x^*)^2}{(1-\mu)N_0Lc}\rho^{t+1}=\mathsf{C}\rho^{t+1},
\label{equation:thm:rate:strong:convex10}
\end{eqnarray}
using \eqref{equation:thm:rate:strong:convex2} in the second inequality and definitions of $\mathsf{C}_1$ in \eqref{equation:c1:strongly:convex} and of $\mathsf{C}$ in \eqref{equation:c:strongly:convex} in the equality. 

From relation \eqref{equation:thm:rate:strong:convex10} established for $t\ge t_0$, relation \eqref{equation:thm:rate:strong:convex9} established for $1\le t<t_0$ and $\mathsf{C}_1=\mathsf{C}-\frac{4\mu\sigma(x^*)^2}{(1-\mu)N_0Lc}<\mathsf{C}$, we finally prove \eqref{equation:linear:rate:iterates:from:t=1}.

To prove the second claim in \eqref{equation:linear:rate:optimality:from:t=2}, we use $L\alpha_t=\mu$, $N_t=N_0\lfloor\zeta^{-t}\rfloor$ and the derived relation \eqref{equation:linear:rate:iterates:from:t=1} for $\esp[\Vert x^t-x^*\Vert^2]$ in the second recursion of Proposition \ref{prop:bound:strong:convex} which bounds $\esp[g(x^{t+1})-g^*]$ in terms of $\esp[\Vert x^t-x^*\Vert^2]$.\qed
\end{proof}

\begin{corollary}\label{cor:rate:complexity:strong:convex}
Let assumptions of Theorem \ref{thm:rate:strong:convex} hold. Then there exists constant $\mathsf{A}>0$ such that for given $\epsilon>0$, \textsf{\emph{Algorithm \ref{algorithm:smooth:strongly:convex}}} achieves the tolerance
$
\esp\left[g(x^{T})-g^*\right]\le\epsilon
$
after $T=\mathcal{O}\left(\log_{\frac{1}{\rho}}(\mathsf{A}\epsilon^{-1})\right)$ iterations using an oracle complexity of
$
\sum_{\tau=1}^TN_{\tau}\le\frac{N_0\rho\cdot\mathcal{O}\left(\epsilon^{-1}\right)}{1-\zeta}\cdot\left(\frac{\rho}{\zeta}\right)^{\log_{\frac{1}{\rho}}(\mathsf{A}\epsilon^{-1})+1}.
$

In particular, if the stepsize parameter $\mu$ and sampling rate parameter $\zeta$ satisfy
$
\zeta=1-\mathsf{a}\left(\mu\frac{c}{2L}+\phi\right)
$
for some $\mathsf{a}\in(0,1]$, then the oracle complexity is 
$$
\sum_{\tau=1}^TN_{\tau}\le\frac{N_0\rho\cdot\mathcal{O}\left(\epsilon^{-1}\right)}{\mathsf{a}\left(\mu\frac{c}{2L}+\phi\right)}.
$$
Up to constants, the same oracle complexity is achieved for the iteration error $\esp[\Vert x^T-x^*\Vert^2]$.
\end{corollary}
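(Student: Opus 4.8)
The plan is to turn the two linear-rate bounds of Theorem \ref{thm:rate:strong:convex} into complexity estimates by the standard ``solve for $T$'' argument. First I would simplify the optimality-gap recursion \eqref{equation:linear:rate:optimality:from:t=2}. Since $\zeta\le\rho<1$, every term on its right-hand side is $\mathcal{O}(\rho^t)$: the bracketed coefficient is a constant plus a vanishing term $\zeta^t\le\rho^t$, the product $\zeta^t\mathsf{C}\rho^t\le\mathsf{C}\rho^{2t}\le\mathsf{C}\rho^t$, and the last term is $\mathcal{O}(\zeta^t)=\mathcal{O}(\rho^t)$. Hence there is a constant $\mathsf{A}>0$ with $\esp[g(x^{t+1})-g^*]\le\mathsf{A}\rho^{t+1}$ for $t\ge2$ (absorbing the factor $\rho^{-1}$ into $\mathsf{A}$), i.e. $\esp[g(x^T)-g^*]\le\mathsf{A}\rho^T$. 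Requiring $\mathsf{A}\rho^T\le\epsilon$ and taking logarithms (recall $\ln(1/\rho)>0$) gives $T\ge\log_{1/\rho}(\mathsf{A}\epsilon^{-1})$, so the least such $T$ satisfies $T=\mathcal{O}\!\left(\log_{1/\rho}(\mathsf{A}\epsilon^{-1})\right)$ with $T\le\log_{1/\rho}(\mathsf{A}\epsilon^{-1})+1$.

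For the oracle complexity I would use $N_\tau=N_0\lfloor\zeta^{-\tau}\rfloor\le N_0\zeta^{-\tau}$ and sum the geometric series,
\[
\sum_{\tau=1}^TN_\tau\le N_0\sum_{\tau=1}^T\zeta^{-\tau}=N_0\,\frac{\zeta^{-T}-1}{1-\zeta}\le\frac{N_0\zeta^{-T}}{1-\zeta}.
\]
The point is then to re-express $\zeta^{-T}$ through the iteration count. Writing $\zeta^{-T}=\rho^{-T}(\rho/\zeta)^T$ and inserting $T\le\log_{1/\rho}(\mathsf{A}\epsilon^{-1})+1$, the identity $\rho^{-\log_{1/\rho}(\mathsf{A}\epsilon^{-1})}=\mathsf{A}\epsilon^{-1}$ turns the factor $\rho^{-T}$ into $\mathcal{O}(\epsilon^{-1})$ (up to the harmless constant $\rho^{-1}$), while $\rho/\zeta\ge1$ bounds $(\rho/\zeta)^T\le(\rho/\zeta)^{\log_{1/\rho}(\mathsf{A}\epsilon^{-1})+1}$. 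Assembling these yields the first claimed bound, in which the factor $(\rho/\zeta)^{\log_{1/\rho}(\mathsf{A}\epsilon^{-1})+1}$ precisely records the mismatch between the sampling rate $\zeta$ and the iterate contraction rate $\rho$.

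For the specialization I would observe that $\zeta=1-\mathsf{a}\left(\mu\frac{c}{2L}+\phi\right)$ yields $1-\zeta=\mathsf{a}\left(\mu\frac{c}{2L}+\phi\right)$ directly in the denominator, and that taking the sampling rate aggressive enough for $\zeta$ to dominate $1-\mu\frac{c}{2L}+\phi$ in the definition $\rho=\left(1-\mu\frac{c}{2L}+\phi\right)\vee\zeta$ forces $\rho=\zeta$; then the residual factor $(\rho/\zeta)^{\log_{1/\rho}(\mathsf{A}\epsilon^{-1})+1}$ collapses to $1$ and the bound reduces to the advertised $N_0\rho\,\mathcal{O}(\epsilon^{-1})/[\mathsf{a}(\mu\frac{c}{2L}+\phi)]$. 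The iterate-error statement follows identically, starting from \eqref{equation:linear:rate:iterates:from:t=1} in place of \eqref{equation:linear:rate:optimality:from:t=2}. The main obstacle I anticipate is exactly this control of $(\rho/\zeta)^{T}$: because $T\sim\log_{1/\rho}(\epsilon^{-1})$, this factor equals $(\mathsf{A}\epsilon^{-1})^{\ln(\rho/\zeta)/\ln(1/\rho)}$, which is a genuine positive power of $\epsilon^{-1}$ unless $\rho=\zeta$. Thus recovering the clean optimal $\mathcal{O}(\epsilon^{-1})$ complexity hinges on verifying that the prescribed choice of $\zeta$ makes the sampling geometric rate match the linear convergence rate $\rho$, and the $\log$-scale bookkeeping relating $\zeta^{-T}$, $\rho^{-T}$ and $\epsilon^{-1}$ is where the calculation must be done carefully.
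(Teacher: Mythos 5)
Your proposal is correct and follows essentially the same route as the paper's proof: bound the oracle cost by the geometric sum $N_0\zeta^{-T}/(1-\zeta)$, substitute $T\le\log_{\frac{1}{\rho}}(\mathsf{A}\epsilon^{-1})+1$ via the factorization $\zeta^{-T}=\left(\frac{\rho}{\zeta}\right)^{T}\rho^{-T}$ together with $\rho^{-\log_{1/\rho}(\mathsf{A}\epsilon^{-1})}=\mathsf{A}\epsilon^{-1}$, and collapse the mismatch factor when $\rho=\zeta$. If anything, your handling of the ``in particular'' case is slightly more careful than the paper's: the paper asserts $\rho=\zeta$ holds by definition under the stated choice of $\zeta$, whereas you correctly flag that this equality requires $\zeta$ to dominate $1-\mu\frac{c}{2L}+\phi$ in the maximum defining $\rho$ (a condition on $\mathsf{a}$ and $\phi$ rather than an automatic consequence), which is precisely what is needed for the clean $\mathcal{O}(\epsilon^{-1})$ complexity.
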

\begin{proof}
We only prove the result to $\esp[g(x^T)-g^*]$ as the proof for $\esp[\Vert x^T-x^*\Vert^2]$ is similar. Set $\epsilon>0$ and let $T$ be minimum number of iterations for $\esp[g(x^T)-g^*]\le\epsilon$ to hold. We have
\begin{eqnarray}\label{equation:cor:rate:complexity:strong:convex1}
\sum_{\tau=1}^TN_\tau &\lesssim &N_0\sum_{\tau=1}^T(\zeta^{-1})^\tau=N_0\frac{(\zeta^{-1})^T-1}{1-\zeta}
\le N_0\frac{(\zeta^{-1})^T}{1-\zeta}.
\end{eqnarray}

From Theorem \ref{thm:rate:strong:convex}, for some constant $\mathsf{A}>0$,
$
T\le\log_{\frac{1}{\rho}}\left(\mathsf{A}\epsilon^{-1}\right)+1.
$
Using this we get
\begin{eqnarray}\label{equation:cor:rate:complexity:strong:convex2}
(\zeta^{-1})^T\le(\zeta^{-1})^{\log_{\frac{1}{\rho}}\left(\mathsf{A}\epsilon^{-1}\right)+1}
=\left(\frac{\rho}{\zeta}\right)^{\log_{\frac{1}{\rho}}\left(\mathsf{A}\epsilon^{-1}\right)+1}\cdot
\left(\frac{1}{\rho}\right)^{\log_{\frac{1}{\rho}}\left(\mathsf{A}\epsilon^{-1}\right)+1}
=\mathsf{A}\rho\epsilon^{-1}\left(\frac{\rho}{\zeta}\right)^{\log_{\frac{1}{\rho}}\left(\mathsf{A}\epsilon^{-1}\right)+1}.
\end{eqnarray}

From \eqref{equation:cor:rate:complexity:strong:convex1}-\eqref{equation:cor:rate:complexity:strong:convex2}, we prove the first claim of the corollary. If $\mu$, $\zeta$ and $\mathsf{a}\in(0,1]$ are chosen as stated in the corollary, we have $\rho=\zeta$ by definition of $\rho$ in Theorem  \ref{thm:rate:strong:convex}. From this, 
$
\zeta=1-\mathsf{a}\left(\mu\frac{c}{2L}+\phi\right)
$
and \eqref{equation:cor:rate:complexity:strong:convex1}-\eqref{equation:cor:rate:complexity:strong:convex2}, we prove the second statement of the corollary.\qed
\end{proof}

\begin{remark}[Constants for the strongly convex case]\label{remark:constants:strongly:convex}
As in Remark \ref{remark:constants:smooth:convex}, we compare the bounds of Theorem \ref{thm:rate:strong:convex} with previous bounds under \eqref{equation:uniform:variance:intro}. By the proof of Theorem \ref{thm:rate:strong:convex}, the constant $\mathsf{C}$ in Theorem \ref{thm:rate:strong:convex} is
\begin{eqnarray}
\mathsf{C}&:=&\left(1-\mu\frac{c}{L}\right)^{-1}\Vert x^1-x^*\Vert^2+\frac{4\mu}{(1-\mu)N_0Lc}\left\{\sigma_L^2\cdot\max_{\tau\in[t_0-1]}\esp\left[\Vert x^\tau-x^*\Vert^2\right]+2\sigma(x^*)^2\right\},\label{equation:c:strongly:convex}
\end{eqnarray} 
where $t_0$ is estimated by \eqref{equation:t0:strongly:convex}. As in the case of ill-conditioned smooth convex functions, we may have $t_0>1$. However, such dependence is milder since we do not have the factor $(t_0+1)^2$ and $t_0$ is \emph{logarithmic} with the endogenous and exogenous parameters. As a result, larger values of $t_0$ are acceptable. An interesting property in the case of strongly convex functions under Assumption \ref{assump:oracle:multiplicative:noise} is that the $\mathsf{L}^2$-boundedness and linear convergence of the generated sequence are obtained in the same proof. Next, we show that $t_0$ in \eqref{equation:t0:strongly:convex} is not too large in comparison to problems satisfying \eqref{equation:uniform:variance:intro}. We note that $t_0$ does not depend on any $x\in X$, but only on $\left(\frac{\sigma_L}{L}\right)^2$, $\phi:=\phi(\mu,\kappa)$ and the exogenous parameters $\mu$, $\zeta$ and $N_0$. Let us assume the standard Lipschitz continuity \eqref{equation:lipschitz:random:intro} of Lemma \ref{lemma:multiplicative:noise:from:random lipschitz} so that $\sigma_L=2L$ for
$
L:=\Lnorm{\mathsf{L}(\xi)},
$
and $\alpha\le\frac{\mu}{L}$. Without loss on generality, we may set $\phi:=1/(2\kappa$) in Theorem \ref{thm:rate:strong:convex} obtaining 
\begin{eqnarray}
t_0=\left\lceil\log_{\frac{1}{\zeta}}\left(\frac{8\mu^2\kappa}{(1-\mu)N_0}\right)\right\rceil\bigvee1,
\label{equation:t0:specific}
\end{eqnarray}
from \eqref{equation:t0:strongly:convex}. Thus $t_0=\mathcal{O}(1)\ln(\kappa/N_0)$ for a given set of exogenous parameters.

We thus conclude: the iteration $t_0$ is dictated solely by the multiplicative per unit distance variance $\sigma_L^2$ and the condition number $\kappa$, independently of the variances $\{\sigma(x)^2\}_{x\in X}$ at the points of the feasible set $X$. Moreover, assuming $L$ is known for the stepsize policy, there exists an upper bound on $t_0$ which is also independent of $\sigma_L^2$ and that only depends on $\ln(\kappa)$ and the exogenous parameters $\mu$, $N_0$, $\zeta$ defined in the stepsize and sampling rate policies. 

The proof of Theorem \ref{thm:rate:strong:convex} also says that for all $t\ge t_0$,
$
\esp\left[\Vert x^{t+1}-x^*\Vert^2\right]\le\mathsf{C}_0\rho^{t+1}
$
and for all $t\ge t_0+1$,
\begin{eqnarray}
\esp\left[g(x^{t+1})-g^*\right]&\le &\left[\frac{\left(L\mu^{-1}-c\right)}{2}+\frac{2\mu}{(1-\mu)N_0}\cdot\frac{\sigma_L^2}{L}\zeta^t\right]\mathsf{C}_0\rho^t+\frac{2\mu}{(1-\mu)N_0}\cdot\frac{\sigma(x^*)^2}{L}\zeta^t,\label{equation:linear:rate:optimality:from:t0+1}
\end{eqnarray}
where
\begin{eqnarray}
\mathsf{C}_0&:=&\left(1-\mu\frac{c}{L}+\phi\right)^{-t_0}\esp\left[\Vert x^{t_0}-x^*\Vert^2\right]+\frac{4\mu\sigma(x^*)^2}{(1-\mu)N_0Lc}.\label{equation:c0:strongly:convex}
\end{eqnarray}
If we compare the constant $\mathsf{C}$ in \eqref{equation:c:strongly:convex} with the constant $\mathsf{C}_0$ in \eqref{equation:c0:strongly:convex}, we note that if $t_0>1$ then $\mathsf{C}_0$ has a larger factor $\left(1-\mu\frac{c}{L}+\phi\right)^{-t_0}$ when compared to $\left(1-\mu\frac{c}{L}+\phi\right)^{-1}$ but $\mathsf{C}_0$ does not have the additional term $\frac{\sigma_L^2}{Lc}\cdot\max_{\tau\in[t_0-1]}\esp[\Vert x^\tau-x^*\Vert^2]$ found in $\mathsf{C}$. This last term is of the order of $\kappa\cdot\max_{\tau\in[t_0-1]}\esp[\Vert x^\tau-x^*\Vert^2]$ and may be larger than $\frac{\sigma(x^*)^2}{Lc}$.

Based on \eqref{equation:t0:specific}-\eqref{equation:c0:strongly:convex}, there are two different regimes depending when $\kappa$ is known or not. For instance, if an upper bound $\kappa_1$ of $\kappa$ is known, we get $t_0=1$ if we set, e.g.,
$
0<\mu\le\frac{-e+\sqrt{e^2+32e\kappa_1}}{16\kappa_1}=\mathcal{O}(1/\sqrt{\kappa_1}),
$
$N_0:=1$ and $\zeta:=e^{-1}$.
This policy puts more effort on the stepsize than on the sampling rate, with a convergence rate of order $\left[1-\frac{\mathcal{O}(1)}{\sqrt{\kappa_1}\kappa}\right]^t$. We can also get $t_0=1$ by setting, e.g., 
$
0<\mu\le\frac{-e+\sqrt{e^2+32e}}{16}=\mathcal{O}(1),
$
$N_0:=\kappa_1$ and $\zeta:=e^{-1}$. This policy puts more effort on the sampling rate; hence it has a factor of $\mathcal{O}(\kappa_1)$ on the oracle complexity but with a faster convergence rate of $\left[1-\frac{\mathcal{O}(1)}{\kappa}\right]^t$. In both cases, since $t_0=1$, the linear convergence stated by \eqref{equation:linear:rate:optimality:from:t0+1}-\eqref{equation:c0:strongly:convex} is an improvement when compared to a policy in which $t_0>1$ since for $t_0=1$ there is no dependence on $\kappa\cdot\max_{\tau\in[t_0-1]}\esp[\Vert x^\tau-x^*\Vert^2]$. When $t_0=1$ (given an upper bound of $\kappa$ and parameters $\mu$, $N_0$ and $\zeta$), the bound given by \eqref{equation:linear:rate:optimality:from:t0+1}-\eqref{equation:c0:strongly:convex} states that for all $t\ge2$,
$$
\esp\left[g(x^{t+1})-g^*\right]\lesssim\left[L\Vert x^1-x^*\Vert^2+\frac{\sigma(x^*)^2}{c}\right]\rho^{t-1},
$$
where we only considered the dominant terms (ignoring $-c$ and the decaying terms of $\mathcal{O}(\zeta^t)$). The above bound has the following property (\textsf{P}): it resembles the bound obtained for the strongly convex case if it was supposed that \emph{\eqref{equation:uniform:variance:intro} holds but replacing $\frac{\sigma^2}{c}$ with $\frac{\sigma(x^*)^2}{c}$} (see\footnote{The result in \cite{byrd:chiny:nocedal:wu2012}, Theorem 4.2 is for $\varphi\equiv0$ and $X=\re^d$ so the bound
$
f(x^1)-f^*\le\frac{L}{2}\Vert x^1-x^*\Vert^2
$
holds by Lemma \ref{lemma:inner:upper:approximation:inequality}.
For $\varphi\equiv0$ and $X=\re^d$, we could obtain
$
\esp\left[f(x^{t+1})-f^*\right]\lesssim\left[f(x^1)-f^*+\frac{\sigma(x^*)^2}{c}\right]\rho^{t-1}.
$} \cite{byrd:chiny:nocedal:wu2012}, Theorem 4.2). In this sense, we improve previous results by showing that under the more aggressive setting of Assumption \ref{assump:oracle:multiplicative:noise}, our bounds depends on the \emph{local variance $\sigma(x^*)^2$}. Moreover, in case \eqref{equation:uniform:variance:intro} indeed holds, our bounds are sharper than in \cite{byrd:chiny:nocedal:wu2012} since typically $\sigma(x^*)^2\ll\sigma^2$ for large $\sqrt{\diam(X)}$ (see Example \ref{example:1}). This may be seen as a \emph{localization property} of \textsf{Algorithm \ref{algorithm:smooth:strongly:convex}} in terms of the oracle's variance.

Suppose now that $\kappa$ cannot be estimated. We have the mild dependence $t_0\sim\mathcal{O}(\ln(\mu^2\kappa/(1-\mu)N_0)\vee1$ with $\kappa$. Hence, $t_0\rightarrow1$ as either $\mu$ decreases, $N_0$ increases or $\kappa$ decreases. We thus conclude that property (\textsf{P}) tends to be satisfied for better conditioned problems or for bigger initial batch sizes. Also, the bounds given by \eqref{equation:linear:rate:optimality:from:t=2} and \eqref{equation:c:strongly:convex} depend on the local variance estimation $\sigma(x^*)^2+\sigma_L^2\max_{\tau\in[t_0-1]}\esp[\Vert x^\tau-x^*\Vert^2]$. Moreover, if \eqref{equation:uniform:variance:intro} indeed holds, our rate statements given by \eqref{equation:linear:rate:optimality:from:t=2} and \eqref{equation:c:strongly:convex} are also sharper than in \cite{byrd:chiny:nocedal:wu2012} since typically $\sigma(x^*)^2+\sigma_L^2\max_{\tau\in[t_0-1]}\esp[\Vert x^\tau-x^*\Vert^2]\ll\sigma^2$ if $\max_{\tau\in[t_0-1]}\esp[\Vert x^\tau-x^*\Vert^2]\ll\diam(X)^2$. See Example \ref{example:1}. This is again a localization property of \textsf{Algorithm \ref{algorithm:smooth:strongly:convex}} in terms of the oracle's variance.
\end{remark}

\begin{remark}[Robust sampling]
From Theorems \ref{thm:rate:smooth:convex:multiplicative:noise}-\ref{thm:rate:strong:convex}, $\{\alpha_t\}$ and $\{N_t\}$ do not require knowledge of $\{\sigma^2(x)\}_{x\in X}$. Precisely, if $a$ and $N_0$ are not tuned to $\sigma(x^*)^2$ for $x^*\in\solset(f,\varphi)$, then the algorithm keeps running with proportional scaling in the convergence rate and oracle complexity. In this sense, the dynamic mini-batch scheme we propose is robust (see \cite{iusem:jofre:oliveira:thompson2017,nem:jud:lan:shapiro2009} for comments on robust methods). 
\end{remark}
	
\section*{Appendix: proofs of Lemmas \ref{lemma:multiplicative:noise:from:random lipschitz} and \ref{lemma:oracle:variance:decay}}
\begin{proof}[Proof of Lemma \ref{lemma:multiplicative:noise:from:random lipschitz}]
Let $x,y\in X$. Jensen's inequality and \eqref{equation:lipschitz:random:intro} imply
\begin{eqnarray*}
\Vert\nabla f(x)-\nabla f(y)\Vert\le\esp\left[\Vert \nabla F(x,\xi)-\nabla F(y,\xi)\Vert\right]
\le\esp[\mathsf{L}(\xi)]\Vert x-y\Vert.
\end{eqnarray*}
The first claim is proved using the above and $\esp[\mathsf{L}(\xi)]\le\Lnorm{\mathsf{L}(\xi)}$ by H\"older's inequality. Using this, we get
\begin{eqnarray*}
\sigma(x)&\le &\Lnorm{\Vert\nabla F(x,\xi)-\nabla F(y,\xi)\Vert}+\Lnorm{\Vert\nabla F(y,\xi)-\nabla f(y)\Vert}+\Lnorm{\Vert\nabla f(x)-\nabla f(y)\Vert}\\
&\le &\Lnorm{\mathsf{L}(\xi)\Vert x-y\Vert}+\sigma(y)+L\Vert x-y\Vert=\sigma(y)+2L\Vert x-y\Vert,\end{eqnarray*}
where we used the triangle inequality for $\Vert\cdot\Vert$ and Minkowski's inequality for $\Lnorm{\cdot}$.
\qed
\end{proof}

\begin{proof}[Proof of Lemma \ref{lemma:oracle:variance:decay}]
Let $x\in\re^d$. Since $\{\xi_j\}_{j=1}^N$ is i.i.d. and \eqref{equation:expected:valued:objective} holds, the sequence $\{\epsilon_j\}_{j=1}^N$ defined by
$
\epsilon_j:=\frac{\nabla F(x,\xi_j)-\nabla f(x)}{N}
$
is an i.i.d. sequence of random vectors with zero mean. As a consequence,\footnote{To show this, let
$\epsilon_i[\ell]$ denote the $\ell$-th coordinate of the vector $\epsilon_i$. From the Pythagorean identity and linearity of the expectation, we get 
$
\esp\left[\left\Vert\sum_{j=1}^N\epsilon_j\right\Vert^2\right]
=\sum_{j=1}^N\esp\left[\Vert\epsilon_j\Vert^2\right]+2\sum_{i<j}\esp\left[\langle\epsilon_i,\epsilon_j\rangle\right]
=\sum_{j=1}^N\esp\left[\Vert\epsilon_j\Vert^2\right]+2\sum_{i<j}\sum_{\ell=1}^d\esp\left\{\epsilon_i[\ell]\epsilon_j[\ell]\right\}.
$
The claim follows immediately from $\esp\left\{\epsilon_i[\ell]\epsilon_j[\ell]\right\}=\esp\left\{\epsilon_i[\ell]\right\}\esp\left\{\epsilon_j[\ell]\right\}=0$ for $i<j$, since for every $\ell\in[d]$, $\epsilon_i[\ell]$ and $\epsilon_j[\ell]$ are centered independent real-valued random variables.
}
we have 
$
\esp\left[\left\Vert\sum_{j=1}^N\epsilon_j(x)\right\Vert^2\right]=\sum_{j=1}^N\esp\left[\Vert\epsilon_j(x)\Vert^2\right].
$
Thus we get
\begin{eqnarray*}
\Lnorm{\Vert\epsilon(x)\Vert}=\sqrt{\sum_{j=1}^N\esp[\Vert\epsilon_j(x)\Vert^2]}
=\sqrt{\sum_{j=1}^N\frac{\sigma(x)^2}{N^2}}
&=&\frac{\sigma(x)}{\sqrt{N}},
\end{eqnarray*}
where in second equality we used that $\{\xi_j\}$ is drawn from $\probn$. The claim follows immediately from the above and Assumption \ref{assump:oracle:multiplicative:noise}.\qed
\end{proof}


\begin{thebibliography}{}
%
%

\bibitem{allen-zhu2016} Z. Allen-Zhu, \emph{Katyusha: The First Direct Acceleration of Stochastic Gradient Methods}, (2016), preprint at https://arxiv.org/abs/1603.05953.  

\bibitem{agarwal:barlett:ravikumar:wainwright2012} A. Agarwal, P. Barlett, P. Ravikumar and M.J. Wainwright, \emph{Information-theoretic lower bounds on the oracle complexity of stochastic convex optimization},  IEEE Transactions on Information Theory, Vol. 58 (2012), Issue 5, pp.  3235-3249.

\bibitem{bach2014} F. Bach, \emph{Adaptivity of Averaged Stochastic Gradient Descent to Local Strong Convexity for Logistic Regression}, Journal of Machine Learning Research, 15 (2014), pp. 595-627.

\bibitem{bach:moulines2011} F. Bach and E. Moulines, \emph{Non-Asymptotic Analysis of Stochastic Approximation Algorithms for Machine Learning}, conference paper, Advances in Neural Information Processing Systems (NIPS), (2011).

\bibitem{balamurugan:bach2016} P. Balamurugan and F. Bach, \emph{Stochastic Variance Reduction Methods for Saddle-Point Problems}, conference paper, Advances in Neural Information Processing Systems (NIPS), (2016).

\bibitem{beck:teboulle2009} A. Beck and M. Teboulle, \emph{A Fast Iterative Shrinkage-Thresholding Algorithm for Linear Inverse Problems}, SIAM J. Imaging Sci., Vol. 2 (2009), No.1, pp. 183-202. 







\bibitem{bottou:curtis:nocedal2016} L. Bottou, F.E. Curtis and J. Nocedal, \emph{Optimization methods for large-scale machine learning}, (2016), preprint at https://arxiv.org/pdf/1606.04838.pdf.

\bibitem{byrd:chiny:nocedal:wu2012} R.H. Byrd, G.M. Chin, J. Nocedal and Y. Wu, \emph{Sample Size selection in Optimization Methods for Machine Learning}, Mathematical Programming ser. B, Vol. 134 (2012), Issue 1, pp. 127-155.


\bibitem{chung1954} K. L. Chung, \emph{On a stochastic approximation method}, Ann. Math. Statist., 25 (1954), pp. 463-483.

\bibitem{defazio:bach:lacoste-julien2014} A. Defazio, F. Bach, and S. Lacoste-Julien, \emph{SAGA: A Fast Incremental Gradient Method With Support for Non-Strongly Convex Composite Objectives}, in Advances in Neural Information Processing Systems 27, Z. Ghahramani, M. Welling, C. Cortes, N. D. Lawrence, and K. Q. Weinberger, eds., Curran Associates, Inc., 2014, pp. 1646-1654.

\bibitem{dieuleveut:flammarion:bach2016} A. Dieuleveut, N. Flammarion and F. Bach, \emph{Harder, better, faster stronger convergence rates for least-squares regression}, (2016) ,preprint at https://arxiv.org/abs/1602.05419.


\bibitem{duchi:singer2009} J. Duchi and Y. Singer, \emph{Efficient Online and Batch Learning Using
Forward Backward Splitting}, Journal of Machine Learning Research, 10 (2009), pp. 2899-2934. 

\bibitem{dvoretzky1956} A. Dvoretzky, \emph{On Stochastic Approximation}, in: Proc. Third Berkeley Symp. on Math. Statist. and Prob., Vol. 1, Univ. of Calif. Press, 1956, pp. 39-55.




\bibitem{flammarion:bach2017} N. Flammarion and F. Bach, \emph{Stochastic Composite Least-Squares Regression with convergence rate $O(1/n)$}, (2017), preprint at https://arxiv.org/abs/1702.06429.

\bibitem{friedlander:schmidt2013} M. Friedlander and M. Schmidt, \emph{Hybrid deterministic-stochastic methods for data fitting}, SIAM J. Sci. Comput., Vol. 34 (2012), Issue 3, pp. 1380-1405. 

\bibitem{frostig:ge:kakade:sidford2015} R. Frostig, R. Ge, S.M. Kakade and A. Sidford, \emph{Competing with the Empirical Risk Minimizer in a Single Pass}, in: COLT 2015 Proceedings, (2015).

\bibitem{fu2014} M.C. Fu (Editor), \emph{Handbook of simulation optimization}, Springer, New York, 2015. 


\bibitem{ghadimi:lan2012} S. Ghadimi and G. Lan, \emph{Optimal stochastic approximation algorithms for strongly convex stochastic composite optimization, I: A generic algorithmic framework}, SIAM J. Optim., Vol. 22, (2012), No.4, pp. 1469-1492.

\bibitem{ghadimi:lan2013} S. Ghadimi and G. Lan, \emph{Optimal stochastic approximation algorithms for strongly convex stochastic composite optimization, II: shrinking procedures and optimal algorithms}, SIAM J. Optim., Vol. 23 (2013), No.4, pp. 2061-2089.

\bibitem{ghadimi:lan2016} S. Ghadimi and G. Lan, \emph{Accelerated gradient methods for nonconvex nonlinear and stochastic programming}, Mathematical Programming ser. A, Vol. 156 (2016), Issue 1, pp. 59-99.

\bibitem{gurbuzbalaban:ozdaglar:parrilo2015} M. G\"urb\"uzbalaban, A. Ozdaglar, and P. Parrilo,  \emph{Convergence rate of incremental aggregated gradient algorithms}, preprint at https://arxiv.org/pdf/1506.02081.pdf, to appear on SIAM Journal on Optimization.

\bibitem{hazan:hale2014} E. Hazan and S. Kale, \emph{Beyond the Regret Minimization Barrier Optimal algorithms for stochastic strongly-convex optimization}, Journal of Machine Learning Research, Vol.15 (2014), pp. 2489-2512.


\bibitem{hu:kwok:pan2009} C. Hu, J. T. Kwok, and W. Pan, \emph{Accelerated gradient methods for stochastic optimization and online learning}, in Advances in Neural Information Processing Systems (NIPS), (2009).

\bibitem{iusem:jofre:thompson2015} A. Iusem, A. Jofr\'e and P. Thompson, \emph{Incremental constraint projection methods for monotone stochastic variational inequalities}, (2015) submitted, preprint at https://arxiv.org/abs/1703.00272.

\bibitem{iusem:jofre:oliveira:thompson2017} A. Iusem, A. Jofr\'e, R.I. Oliveira and P. Thompson, \emph{Extragradient methods with variance reduction for stochastic variational inequalities}, SIAM J. Optim., Vol.27 (2017), No.2, pp. 686-724.

\bibitem{iusem:jofre:oliveira:thompson2016} A. Iusem, A. Jofr\'e, R.I. Oliveira and P. Thompson, \emph{Variance-based stochastic extragradient methods with line search for stochastic variational inequalities}, (2016), preprint at https://arxiv.org/abs/1703.00262.

\bibitem{jain:kakade:kidambi:netrapalli:sidford2016} P. Jain, S.M. Kakade, R. Kidambi, P. Netrapalli and A. Sidford, \emph{Parallelizing Stochastic Approximation Through Mini-Batching and Tail-Averaging}, (2016), preprint at https://arxiv.org/abs/1610.03774.

\bibitem{jain:kakade:kidambi:netrapalli:sidford2017} P. Jain, S.M. Kakade, R. Kidambi, P. Netrapalli and A. Sidford, \emph{Accelerating Stochastic Gradient Descent}, (2017), preprint at https://arxiv.org/abs/1704.08227.

\bibitem{jiang:xu2008} H. Jiang and H. Xu, \emph{Stochastic approximation approaches to the stochastic variational inequality problem}, IEEE Transactions on Automatic Control, 53 (2008), Issue 6, pp. 1462-1475.

\bibitem{johnson:zhang2013} R. Johnson and T. Zhang, \emph{Accelerating stochastic gradient descent using predictive variance reduction}, in Advances in Neural Information Processing Systems (NIPS), (2013).

\bibitem{juditsky:nazin:tsybakov:vayatis2005} A.B. Juditsky, A.V. Nazin, A.B. Tsybakov and N. Vayatis, \emph{Recursive aggregation of estimators via the mirror descent algorithm with averaging}, Probl. Inf. Transm. 41 (2005), Issue 4, pp. 368-384.

\bibitem{juditsky:nemirovski:tauvel2011} A. Juditsky, A. Nemirovski and C. Tauvel, \emph{Solving variational inequalities with stochastic mirror-prox algorithm}, Stochastic Systems, Vol. 1 (2011), No.1, pp. 17-58.

\bibitem{juditsky:rigollet:tsybakov2008} A. Juditsky, P. Rigollet and A.B. Tsybakov, \emph{Learning by mirror averaging}, Ann. Stat. 36 (2008), No.5, pp. 2183-2206.





\bibitem{lan2012} G. Lan, \emph{An optimal method for stochastic composite optimization}, Mathematical Programming ser. A, Vol. 133 (2012), Issue 1, pp 365-397.

\bibitem{leroux:schmidt:bach2012} N. Le Roux, M. Schmidt, and F. R. Bach, \emph{A Stochastic Gradient Method with an Exponential Convergence Rate for Finite Training Sets}, in Advances in Neural Information Processing Systems 25 (NIPS), (2012).

\bibitem{lee:wright2012} S. Lee and S. Wright, \emph{Manifold identification in dual averaging for regularized stochastic online learning}, Journal of Machine Learning Research, Vol. 13 (2012), pp. 1705-1744.

\bibitem{lin:mairal:harchaoui2015} H. Lin, J. Mairal, and Z. Harchaoui, \emph{A Universal Catalyst for First-Order Optimization}, in Advances in Neural Information Processing Systems (NIPS), (2015).

\bibitem{lin:chen:pena2014} Q. Lin, X. Chen and J. Pe\~na, \emph{A sparsity preserving stochastic gradient methods for sparse regression}, Volume 58 (2014), Issue 2, pp 455-482.


\bibitem{needell:srebro:ward2015} D. Needell, N. Srebro and R. Ward \emph{Stochastic gradient descent, weighted sampling, and the randomized kaczmarz algorithm}, Mathematical Programming ser. A, Vol. 155 (2016), Issue 1, pp. 549-573.

\bibitem{nem:jud:lan:shapiro2009} A. Nemirovski, A. Juditsky, G. Lan and A. Shapiro, \emph{Robust stochastic approximation approach to stochastic programming}, SIAM J. Optim., Vol. 19 (2009), No. 4, pp. 1574-1609.

\bibitem{nem:rudin1978} A.S. Nemirovski and D.B. judin, \emph{On Cezari's convergence of the steepest descent method for approximating saddle point of convex-concave functions}, Soviet Mathematics-Doklady, 19 (1978).


\bibitem{nesterov1983} Y. Nesterov, \emph{A method for unconstrained convex minimization problem with the rate of convergence $O(1/k^2)$}, Soviet Mathematics Doklady, 27 (1983), pp. 372-376.


\bibitem{nesterov2007} Y. Nesterov, \emph{Gradient Methods for Minimizing Composite Objective Function}, Vol. 140 (2013), Issue 1, pp. 125-161. 

\bibitem{nesterov2009} Y. Nesterov, \emph{Primal-dual subgradient methods for convex problems}, Mathematical Programming Ser. B, 120 (2009), Issue 1, pp. 221-259.

\bibitem{nesterov:vial2008} Y. Nesterov and Vial (2008), \emph{Confidence level solutions for stochastic programming}, Vol. 44 (2008), Issue 6, pp. 1559-1568.


\bibitem{polyak1991} B.T. Polyak, \emph{New Method of Stochastic Approximation Type}, Automation and Remote Control, 51 (1991), pp. 937-946.

\bibitem{polyak:juditsky1992} B.T. Polyak and A.B. Juditsky, \emph{Acceleration of Stochastic Approximation by Averaging}, SIAM Journal on Control and  Optimization, 30 (1992), pp. 838-855.

\bibitem{robbins:monro1951} H. Robbins and S. Monro, \emph{A Stochastic Approximation Method}, The Annals of Mathematical Statistics, 22 (1951), pp. 400-407.

\bibitem{rosasco:silvia:vu2014} L. Rosasco, S. Villa, and B.C. V\~u, \emph{Convergence of a Stochastic Proximal Gradient Algorithm}, (2014), preprint at https://arxiv.org/abs/1403.5074.

\bibitem{ruppert1988} D. Ruppert, \emph{Efficient estimations from a slowly convergent Robbins-Monro process}, tech. report, Cornell University Operations Research and Industrial Engineering,(1988), preprint at https://ecommons.cornell.edu/handle/1813/8664.



\bibitem{schmidt:leroux:bach2013} M. Schmidt, N. Le Roux, and F. Bach, \emph{Minimizing finite sums with the stochastic average gradient}, Mathematical Programming ser. A, Vol. 162 (2017), Issue 1, pp. 83-112.




\bibitem{shapiro:dent:rus2009} A. Shapiro, D. Dentcheva and A. Ruszczy\'nski, \emph{Lectures on Stochastic Programming: Modeling and Theory}, MOS-SIAM Ser. Optim., SIAM, Philadelphia, 2009.


\bibitem{sra:nowozin:wright2012} S. Sra, S. Nowozin and S.J. Wright (Editors), \emph{Optimization for Machine Learning}, The MIT Press, Cambridge, Massachusetts, 2012.

\bibitem{tseng2008} P. Tseng, \emph{On Accelerated Proximal Gradient Methods for Convex-Concave Optimization}, manuscript, University of Washington, Seattle, 2008.


\bibitem{xiao2010} L. Xiao, \emph{Dual averaging methods for regularized stochastic learning and online optimization}, Journal of Machine Learning Research, Vol. 9 (2010), pp. 2543-2596.

\bibitem{xiao:zhang2014} L. Xiao and T. Zhang, \emph{A Proximal Stochastic Gradient Method with Progressive Variance Reduction}, SIAM Journal on Optimization, Vol. 24 (2014), No.4, pp. 2057-2075.





\bibitem{woodworth:srebro2016} B.E. Woodworth and N. Srebro, \emph{Tight Complexity Bounds for Optimizing Composite Objectives}, in:  Advances in Neural Information Processing Systems 29 (NIPS), 2016.

\bibitem{zhang:yang:jin:he2013} L. Zhang, T. Yang, R. Jin and X. He, \emph{$O(log T)$ projections for stochastic optimization of smooth and strongly convex functions}, in: Proceedings of the International Conference on International Conference on Machine Learning (ICML), Vol. 28, 2013. 


\end{thebibliography}


\end{document}